\theoremstyle{plain} \newtheorem{thm}{Theorem}[section]
\newtheorem{lemma}[thm]{Lemma} \newtheorem{conjecture}[thm]{Conjecture}
 \newtheorem{prop}[thm]{Proposition}
\theoremstyle{definition} \newtheorem{definition}[thm]{Definition}
\newtheorem{remark}[thm]{Remark} 
\newtheorem{question}[thm]{Question} 
\def\asequal{\mathrel{\mathpalette\@mvereq{\hbox{\sevenrm a.s.}}}}
\def\@mvereq#1#2{\lower.5\p@\vbox{\baselineskip\z@skip\lineskip1.5\p@
    \ialign{$\m@th#1\hfil##\hfil$\crcr#2\crcr=\crcr}}}
\def\acong{\mathrel{\mathpalette\@avereq\sim}} 
\def\@avereq#1#2{\lower.5\p@\vbox{\baselineskip\z@skip\lineskip-.5\p@
    \ialign{$\m@th#1\hfil##\hfil$\crcr#2\crcr\longrightarrow\crcr}}}
\def\newdot{{\kern.8pt\cdot\kern.8pt}}
\font\sevenrm=cmr7
\newcommand{\E}{\mathbb{E}}\renewcommand{\H}{\mathbb{H}} 
\newcommand{\N}{\mathbb{N}}
\newcommand{\R}{\mathbb{R}} 
\renewcommand{\P}{\mathbb{P}}
\newcommand{\SB}{{\mathscr B}}  
\newcommand{\SD}{{\mathscr D}}  
\newcommand{\SF}{{\mathscr F}}
\newcommand{\SH}{{\mathscr H}} 
\DeclareMathOperator{\inv}{inv} \DeclareMathOperator{\pr}{pr}
\def\mathpal#1{\mathop{\mathchoice{\text{\rm #1}}%
   {\text{\rm #1}}{\text{\rm #1}}%
   {\text{\rm #1}}}\nolimits} \def\Ric{\mathpal{Ric}}
\def\Sect{\mathpal{Sect}} \def\dist{\mathpal{dist}}
  \def\pr{\mathpal{pr}}
 \def\di{\displaystyle}
\def\f{\frac} \def\a{\alpha } \def\b{\beta }  \def\d{\delta }
\def\e{\varepsilon }  \def\g{\gamma } \def\l{\lambda }
 \def\Om{\Omega } \def\om{\omega } \def\p{\partial }
\def\s{\sigma }
\begin{document}

\title[Existence of non-trivial harmonic functions] {Existence of non-trivial
  harmonic functions\\ on Cartan-Hadamard manifolds\\ of unbounded curvature}

\author[M. Arnaudon]{Marc Arnaudon} \address{D\'epartement de
  math\'ematiques\hfill\break\indent Universit\'e de Poitiers, T\'el\'eport 2
  - BP 30179\hfill\break\indent F--86962 Futuroscope Chasseneuil Cedex,
  France} \email{arnaudon@math.univ-poitiers.fr}

\author[A. Thalmaier]{Anton Thalmaier} \address{Institute of Mathematics,
  University of Luxembourg\hfill\break\indent 162A, avenue de la
  Fa\"{\i}encerie\hfill\break\indent L--1511 Luxembourg, Grand-Duchy of
  Luxembourg} \email{anton.thalmaier@uni.lu}

\author[S. Ulsamer]{Stefanie Ulsamer} \address{NWF I --
  Mathematik\hfill\break\indent Universit\"at Regensburg\hfill\break\indent
  D--93040 Regensburg, Germany} \email{stefanie.ulsamer@d-fine.de}

\subjclass{Primary 58J65; Secondary 60H30}

\date{\today}

%
%

\begin{abstract}\noindent
  The Liouville property of a complete Riemannian manifold $M$ (i.e., the
  question whether there exist non-trivial bounded harmonic functions on $M$)
  attracted a lot of attention. For Cartan-Hadamard manifolds the role of
  lower curvature bounds is still an open problem. We discuss examples of
  Cartan-Hadamard manifolds of unbounded curvature where the limiting angle of
  Brownian motion degenerates to a single point on the sphere at infinity, but
  where nevertheless the space of bounded harmonic functions is as rich as in
  the non-degenerate case.  
  To see the full boundary the point at infinity has to be blown up in a 
  non-trivial way. 
  Such examples indicate that the situation
  concerning the famous conjecture of Greene and Wu about existence of
  non-trivial bounded harmonic functions on Cartan-Hadamard manifolds is much
  more complicated than one might have expected.
\end{abstract}

\maketitle
\setcounter{tocdepth}{1}
\tableofcontents

\section{Introduction}\label{Section1}
\setcounter{equation}0

The study of harmonic functions on complete Riemannian manifolds, i.e., the
solutions of the equation $\Delta u=0$ where $\Delta$ is the Laplace-Beltrami
operator, lies at the interface of analysis, geometry and stochastics.
Indeed, there is a deep interplay between geometry, harmonic function theory,
and the long-term behaviour of Brownian motion.  Negative curvature amplifies
the tendency of Brownian motion to move away from its starting point and, if
topologically possible, to wander out to infinity. On the other hand,
non-trivial asymptotic properties of Brownian paths for large time correspond
with non-trivial bounded harmonic functions on the manifold.

There is plenty of open questions concerning the richness of certain spaces of
harmonic functions on Riemannian manifolds.  Even in the case of simply
connected negatively curved Riemannian manifolds basic questions are still
open.  For instance, there is not much known about the following problem posed
by Wu in 1983.

\begin{question}[cf.~Wu \cite{Wu:83} p.\,139]
   \label{Conj:Wu}
   If $M$ is a simply-connected complete Riemannian manifold with sectional
   curvature $\leq{-c}<0$, do there exist $n$ bounded harmonic functions
   {\rm(}$n=\dim M${\rm)} which give global coordinates on $M$?
\end{question}

It should be remarked that, under the given assumptions, it is even not known in
general whether there exist non-trivial bounded harmonic functions at all.
This question is the content of the famous Greene-Wu conjecture which asserts
existence of non-constant bounded harmonic functions under slightly more
precise curvature assumptions.

\begin{conjecture}[cf.~Greene-Wu \cite{Greene-Wu:79} p.\,767]
   \label{Conj:Greene-Wu}
   Let $M$ be a simply-connected complete Riemannian manifold of non-positive
   sectional curvature and $x_0\in M$ such that
   $$\hbox{\rm Sect}^M_x\leq -c\,r(x)^{-2}\quad \text{for all }\,x\in
   M\setminus K$$
   for some $K$ compact, $c>0$ and $r=\hbox{\rm dist}(x_0,\newdot)$.  
   Then $M$ carries non-constant bounded harmonic functions.
\end{conjecture}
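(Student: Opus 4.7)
The plan is to attack the conjecture probabilistically, constructing bounded harmonic functions as expectations of boundary data along paths of Brownian motion. On a Cartan--Hadamard manifold $M$ the exponential map at $x_0$ is a diffeomorphism, so one compactifies $M$ by attaching the sphere at infinity $S_\infty(M)$ of asymptote classes of geodesic rays; for a bounded continuous $f\colon S_\infty(M)\to\R$ the candidate harmonic function is $u(x)=\E_x[f(X_\infty)]$, where $X_t$ is Brownian motion on $M$ and $X_\infty$ its almost sure limit. What one needs is (i) almost sure convergence $X_t\to X_\infty\in S_\infty(M)$, (ii) non-triviality of the hitting distribution $x\mapsto \P_x(X_\infty\in\cdot)$, and (iii) continuity of $u$ up to $S_\infty(M)$ so that $u$ genuinely separates points.

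The main analytic input is the Hessian comparison theorem applied to $r(x)=\dist(x_0,x)$. Outside the compact set $K$ the hypothesis $\Sect\le -c/r^2$ gives, by comparison with the model space of radial curvature $-c/r^2$, a lower bound on $\nabla^2 r$ whose radial trace yields a sharp estimate of the drift of $r(X_t)$. First I would apply It\^o's formula to $\log r(X_t)$ to deduce transience and a quantitative escape rate $r(X_t)\ge t^{\alpha}$ for some explicit $\alpha=\alpha(c)>0$; transience together with convexity of horoballs produces convergence of $X_t$ along a unique geodesic ray, giving step~(i). For~(ii) one decomposes $X_t$ in exponential normal coordinates into a radial part and an $S^{n-1}$-valued angular martingale whose quadratic variation is dominated by $\int_0^t r(X_s)^{-2}\,ds$; combined with the polynomial escape rate, this bounds the angular $L^2$-oscillation and, for $c$ sufficiently large, should yield a non-atomic exit law. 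Step~(iii) would then follow from a barrier construction using the horofunction attached to each boundary point together with the Harnack inequality.

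The hard part is securing non-triviality of the angular limit under the borderline decay $c\,r^{-2}$. When $\Sect\le -c<0$ the angular quadratic variation decays exponentially and non-degeneracy is automatic, but at the critical scale $r^{-2}$ the radial process escapes only polynomially, so the convergence of $\int_0^\infty r(X_s)^{-2}\,ds$ depends sharply on $c$ and on a lower curvature bound that the hypothesis does not provide. Worse, as the abstract of the present paper already signals, there exist Cartan--Hadamard manifolds of unbounded curvature where the angular limit on $S_\infty(M)$ collapses to a single point and yet the space of bounded harmonic functions is rich. This suggests that the genuine obstacle is not the probabilistic estimate but the geometric one: the \emph{correct} boundary is a Martin-type refinement of $S_\infty(M)$ obtained by blowing up the degenerate direction, and any proof of the conjecture in this generality must supplement the stochastic argument with an explicit construction of such a compactification on which the exit measure of Brownian motion is genuinely diffuse.
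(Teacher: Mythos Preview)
This statement is a \emph{conjecture}, not a theorem: the paper does not prove it and indeed presents it as an open problem (``it is even not known in general whether there exist non-trivial bounded harmonic functions at all''). There is therefore no proof in the paper to compare your proposal against.

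Your proposal is not a proof either, and to your credit you essentially say so in the final paragraph. The approach via angular convergence on $S_\infty(M)$ that you outline in steps (i)--(iii) is exactly the strategy that the paper's main example is designed to defeat: on Borb\'ely's manifold the sectional curvature is bounded above by a strictly negative constant (much stronger than the $-c/r^2$ hypothesis), Brownian motion is transient, and yet the angular process collapses almost surely to a single point of $S_\infty(M)$, so the exit law on the sphere at infinity is a Dirac mass and the harmonic functions $u(x)=\E_x[f(X_\infty)]$ are all constant. Your step~(ii) therefore fails in general even under the stronger hypothesis $\Sect\le -c<0$, and no amount of sharpening the radial comparison or the angular quadratic-variation estimate will rescue it without a lower curvature bound. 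The closing remark in your proposal---that one must replace $S_\infty(M)$ by a finer boundary obtained by blowing up the degenerate direction---is precisely what the paper does for this specific example (the Poisson boundary turns out to be $\R\times S^1$), but carrying this out in the generality of the Greene--Wu hypotheses is the open problem itself, not a step one can invoke.
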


Concerning Conjecture \ref{Conj:Greene-Wu} substantial progress has been made
since the pioneering work of Anderson \cite{Anderson:83}, Sullivan
\cite{Anderson:83}, and Anderson-Schoen \cite{Anderson-Schoen:85}.
Nevertheless, the role of lower curvature bounds is far from being understood.

From a probabilistic point of view, Conjecture \ref{Conj:Greene-Wu} concerns
the eventual behaviour of Brownian motion on these manifolds as time goes to
infinity.  Indeed, for any Riemannian manifold, we have the following
probabilistic characterization.

\begin{lemma}
  For a Riemannian manifold $(M,g)$ the following two conditions are
  equivalent:
\begin{enumerate}
\item[\rm i)] There exist non-constant bounded harmonic functions on $M$.
  
\item[\rm ii)] {\rm BM} has non-trivial exit sets, i.e., if $X$ is a Brownian
  motion on $M$ then there exist open sets $U$ in the $1$-point
  compactification $\hat M$ of $M$ such that
  $$
  \P\{X_t\in U\hbox{\ eventually}\}\not=0\hbox{\ or\ }1.
  $$
\end{enumerate}
\end{lemma}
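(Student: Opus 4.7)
The plan is to prove the two implications separately by standard martingale arguments, treating $h(X_t)$ (resp.\ $u(X_t)$) as a bounded martingale whose long-term behaviour encodes the boundary data.

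For the direction (ii)$\Rightarrow$(i), suppose $U\subset\hat M$ is open and that $A:=\{X_t\in U\text{ eventually}\}$ has probability strictly between $0$ and $1$ under $\P^{x_0}$ for some starting point $x_0$. Define $u\colon M\to[0,1]$ by $u(y):=\P^y(A)$. Since $A$ is a shift-invariant tail event, the Markov property gives $\E^{x_0}[\mathbf 1_A\mid\mathcal F_t]=\P^{X_t}(A)=u(X_t)$, identifying $u(X_t)$ as a bounded martingale converging $\P^{x_0}$-a.s.\ to $\mathbf 1_A$. Optional stopping at the first exit time $\tau_B$ from any small geodesic ball $B=B(y,r)$ yields $u(y)=\E^y[u(X_{\tau_B})]$, which is the spherical mean value property with respect to harmonic measure; by the classical Kakutani criterion (or elliptic regularity applied to the distributional equation), $u$ is smooth and harmonic. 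Finally, if $u$ were constant equal to some $c$, then $u(X_t)\equiv c$ would force $\mathbf 1_A=c$ $\P^{x_0}$-a.s., so $c\in\{0,1\}$, contradicting $\P^{x_0}(A)\in(0,1)$. Thus $u$ is a non-constant bounded harmonic function.

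For (i)$\Rightarrow$(ii), let $h$ be a non-constant bounded harmonic function. For each $x\in M$ the bounded martingale $h(X_t)$ converges $\P^x$-a.s.\ to some limit $h_\infty$, and $\E^x[h_\infty]=h(x)$. The key preliminary claim is that $h_\infty$ is non-degenerate under some $\P^{x_0}$. Indeed, if under every $\P^x$ we had $h_\infty=h(x)$ almost surely, then the Markov property would yield
$$h(X_s)=\E^{X_s}[h_\infty]=h_\infty\quad\P^x\text{-a.s.\ for all }s\geq0,$$
so $h\circ X$ would be constant along paths; by the support theorem for Brownian motion on a connected manifold, this forces $h$ to be globally constant, a contradiction. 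Now pick $x_0$ such that $h_\infty$ is non-degenerate under $\P^{x_0}$, and choose a level $c$ strictly between the essential infimum and essential supremum of $h_\infty$, avoiding the at most countably many atoms of its law, so that $\P^{x_0}\{h_\infty<c\}\in(0,1)$ and $\P^{x_0}\{h_\infty=c\}=0$. Set $U:=\{y\in M:h(y)<c\}$, which is open in $M\subset\hat M$. Continuity of $h$ together with $h(X_t)\to h_\infty$ gives
$$\{h_\infty<c\}\;\subset\;\{X_t\in U\text{ eventually}\}\;\subset\;\{h_\infty\leq c\}\quad\P^{x_0}\text{-a.s.},$$
and the two extreme events have equal probability, so $\P^{x_0}\{X_t\in U\text{ eventually}\}\in(0,1)$.

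The conceptual obstacle in both directions is the same: ruling out triviality of the constructed object. In (ii)$\Rightarrow$(i) it is the non-constancy of $u$, handled by the tail-event $0$--$1$ dichotomy applied to $\mathbf 1_A=\lim u(X_t)$. In (i)$\Rightarrow$(ii) it is the non-degeneracy of $h_\infty$ for some starting point, handled by the Markov-property argument above, combined with the elementary choice of a non-atomic level $c$. The only genuinely analytic input is Kakutani's theorem in the first direction, allowing one to pass from the probabilistic mean value property to a classical solution of $\Delta u=0$.
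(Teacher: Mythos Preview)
The paper does not actually give a proof of this lemma; it is stated as a well-known characterization and the surrounding discussion only sketches the more refined isomorphism \eqref{Lemma:exitset}--\eqref{Lemma:exitsetinv} between $\SH_{\text{b}}(M)$ and bounded shift-invariant variables, noting that the forward map is well defined by martingale convergence and that the inverse is obtained by taking expectations. Your argument is correct and supplies exactly the standard details the paper suppresses: the (ii)$\Rightarrow$(i) direction via $u(y)=\P^y(A)$ and the mean-value property is what the paper's remark ``$u(x):=\P_x\{X_t\in U\text{ eventually}\}$ is a bounded harmonic function'' refers to, and your (i)$\Rightarrow$(ii) direction---proving non-degeneracy of $h_\infty$ by the Markov/support argument and then slicing at a non-atomic level $c$ to produce the open set $U=\{h<c\}$---is the natural way to extract an exit set from the general isomorphism. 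So your proof is in the same spirit as the paper's implicit reasoning, just carried out explicitly.
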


More precisely, Brownian motion $X$ on $M$ may be realized on the space $C(\R_+,\hat
M)$ of continuous paths with values in the $1$-point compactification $\hat M$
of $M$, equipped with the standard filtration $\SF_t=\sigma\{X_s=\pr_s\vert
s\leq t\}$ generated by the coordinate projections $\pr_s$ up to time~$t$.  Let
$\zeta=\sup\{t>0: X_t\in M\}$ be the lifetime of $X$ and let
$\SF_{\hbox{\sevenrm inv}}$ denote the shift-invariant $\sigma$-field on
$C(\R_+,\tilde M)$.  Then there is a canonical isomorphism between the space
$\SH_{\hbox{\sevenrm b}}(M)$ of bounded harmonic functions on $M$ and the set
$b\SF_{\hbox{\sevenrm inv}}$ of bounded $\SF_{\hbox{\sevenrm inv}}$-measurable
random variables up to equivalence, given as follows:
\begin{equation}
   \label{Lemma:exitset}
\SH_{\hbox{\sevenrm b}}(M) \acong b\SF_{\hbox{\sevenrm inv}}/_\sim,\quad
   u\longmapsto \lim_{t\uparrow\zeta}(u\circ X_t).
\end{equation}
(Bounded shift-invariant random variables are considered as equivalent, if
they agree $\P_x$-a.e., for each $x\in M$.)  Note that the isomorphism
\eqref{Lemma:exitset} is well defined by the martingale convergence theorem,
and that the inverse map to \eqref{Lemma:exitset} is given by taking
expectations:
\begin{equation}
   \label{Lemma:exitsetinv}
b\SF_{\hbox{\sevenrm inv}}/_\sim \ni H \longmapsto u\in \SH_{\hbox{\sevenrm b}}(M)\ \text{ where }u(x):=\E_x[H]. 
\end{equation}
In particular,
$$u(x):=\P_x\{X_t\in U\hbox{\ eventually}\}$$
is a bounded harmonic function
on $M$, and {\it non-constant\/} if and only if $U$ is a {\it non-trivial\/}
exit set.

Now let $(M,g)$ be a \textit{Cartan-Hadamard manifold}, i.e., a simply
connected complete Riemannian manifold of non-positive sectional curvature.
(All manifolds are supposed to be connected).  In terms of the exponential map
$\exp_{x_0}\colon T_{x_0}M\acong M$ at a fixed base point $x_0\in M$, we
identify $\rho\colon \R^n\cong T_{x_0}M\acong M$.  Via pullback of the metric
on $M$, we get an isometric isomorphism $(M,g)\cong(\R^n,\rho^*g)$, and in
particular, $M\setminus\{x_0\}\cong{]0,\infty[}\times S^{n-1}$.  In terms of
such global polar coordinates, Brownian motions $X$ on $M$ may be decomposed
into their radial and angular part,
$$X_t=(r(X_t),\vartheta(X_t))$$
where $r(X_t)=\dist(x_0,X_t)$ and where
$\vartheta(X_t)$ takes values in $S^{n-1}$.

For a Cartan-Hadamard manifold $M$ of dimension $n$ there is a natural
geometric boundary, the \textit{sphere at infinity} $S_\infty(M)$, such that
$M\cup S_\infty(M)$ equipped with the \textit{cone topology} is homeomorphic
to the unit ball $B\subset \R^d$ with boundary $\partial B = S^{d-1}$,
cf.~\cite{Eberlein-ONeill:73}, \cite{Bishop-ONeill:68}. In terms of polar
coordinates on $M$, a sequence $(r_n,\vartheta_n)_{n\in\N}$
of points in $M$ converges to a point of $S_\infty(M)$ if and only if $r_n\to
\infty$ and $\vartheta_n\to\vartheta\in S^{n-1}$.

Given a continuous function $f\colon S_\infty(M)\to \R$ the \textit{Dirichlet
problem at infinity} is to find a harmonic function $h\colon M\to \R$ which
extends continuously to $S_\infty(M)$ and there coincides with the given 
function $f$, i.e.,
$$h\vert S_\infty(M)=f.$$
The Dirichlet problem at infinity is called \textit{solvable} if
this is possible for every such function $f$. 
In this case a rich class of non-trivial bounded harmonic functions on $M$ 
can be constructed via solutions of the Dirichlet problem at infinity.

In 1983, Anderson \cite{Anderson:83} proved that the Dirichlet problem at infinity is 
indeed uniquely solvable for Cartan-Hadamard manifolds of pinched negative curvature,
i.e.~for complete simply connected Riemannian manifolds $M$ whose sectional
curvatures satisfy
$$-a^2\le\Sect^M_x \le-b^2 \text{ for all $x\in M$,}$$
where $a^2 > b^2 >0$
are arbitrary constants. The proof uses \textit{barrier functions} and 
Perron's classical method to construct harmonic functions. 
Along the same ideas Choi \cite{Choi:84} showed in 1984 that in rotational 
symmetric case of a \textit{model\/} 
$(M,g)$ the Dirichlet problem
at infinity is solvable if the radial curvature is bounded from above by
$-A/(r^2\log(r))$. Hereby a Riemannian manifold $(M,g)$ is called \textit{model\/} if it
possesses a pole $p\in M$ and every linear isometry $\varphi\colon T_pM\to
T_pM$ can be realized as the differential of an isometry $\Phi\colon M\to M$ with
$\Phi(p) = p$. Choi \cite{Choi:84} furthermore provides a
criterion, the \textit{convex conic neighbourhood condition}, which is sufficient for 
solvability of the Dirichlet problem at infinity.

\begin{definition}[cf.~Choi \cite{Choi:84}]
  A Cartan-Hadamard manifold $M$ satisfies the \textit{convex
    conic neighbourhood condition at $x \in S_\infty(M)$} if for any $y\in
  S_\infty(M)$, $y\not = x$, there exist subsets $V_x$ and $V_y\subset M \cup
  S_\infty(M)$ containing $x$ and $y$ respectively, such that $V_x$ and $V_y$ are disjoint open sets of $M\cup
  S_\infty(M)$ in terms of the cone topology and $V_x\cap M$ is convex with
  ${C}^2$-boundary. If this condition is satisfied for all $x\in S_\infty(M)$,
  $M$ is said to satisfy the \textit{convex conic neighbourhood condition}.
\end{definition}

It is shown in Choi \cite{Choi:84} that the Dirichlet problem at infinity is
solvable for a Cartan-Hadamard manifold $M$ with sectional curvature bounded
from above by $-c^2$ for some $c>0$, if $M$ satisfies the convex conic neighbourhood
condition.

In probabilistic terms, if the Dirichlet
problem at infinity for $M$ is solvable and almost surely $$X_\zeta :=
\lim_{t\to\zeta} X_t$$ exists in $S_\infty(M)$, where $(X_t)_{t<\zeta}$ is a
Brownian motion on $M$ with lifetime $\zeta$, the unique solution $h\colon M\to \R$
to the Dirichlet problem at infinity with boundary function $f$ is given as
\begin{equation}
h(x) = \E\left[f\circ X_{\zeta(x)}(x)\right].
\end{equation}
Here $X(x)$ is a Brownian motion starting at $x\in M$.

Conversely, supposing that for Brownian motion $X(x)$ on $M$ almost surely
$$\lim_{t\to \zeta(x)} X_t(x)$$ exists in $S_\infty(M)$ for each $x\in M$, one may 
consider the \textit{harmonic measure} $\mu_x$ on $S_\infty(M)$, where for a
Borel set $U\subset S_\infty(M)$
\begin{equation}
\mu_x(U) := \mathbb{P}\left\{X_{\zeta(x)}(x)\in U\right\}.
\end{equation}
Then, for any Borel set $U\subset S_\infty(M)$, the assignment
$$x\mapsto \mu_x(U)$$
defines a bounded harmonic function $h_U$ on $M$. 
By the maximum principle $h_U$ is either
identically equal to $0$ or $1$ or takes its values in $]0,1[$. Furthermore, all
harmonic measures $\mu_x$ on $S_\infty(M)$ are equivalent. 
Thus, by showing that
the harmonic measure class on $S_\infty(M)$ is non-trivial, 
we can construct non-trivial bounded harmonic functions on $M$.
To show that, for a given
continuous boundary function $f\colon S_\infty(M)\to \R$, 
the harmonic function 
\begin{equation}
\label{uniq_sol_DP}
h(x) = \int_{S_\infty(M)} f(y) \mu_x(dy).
\end{equation}
extends continuously to the boundary $S_\infty(M)$ and takes there the prescribed
boundary values $f$, we have to show that, 
whenever a sequence of points $x_i$ in $M$ converges to $x_\infty\in 
S_\infty(M)$ then the measures $\mu_{x_i}$ converge weakly to the 
Dirac measure at~$x_\infty$. 
In particular, for a 
continuous boundary function $f\vert S_\infty(M)$, 
the unique solution to the
Dirichlet problem at infinity is given by formula \eqref{uniq_sol_DP}.

The first results in this direction have been obtained by Prat \cite{Prat:71, Prat:75} 
between 1971 and 1975. He proved that on a
Cartan-Hadamard manifold with sectional curvature bounded from above
by a negative constant $-k^2$, $k>0$, Brownian motion is transient,
i.e.,~almost surely all paths of the Brownian motion exit from $M$ at the
sphere at infinity \cite{Prat:75}.  If in addition the
sectional curvatures are bounded from below by a constant $-K^2$, $K>k$, he
showed that the angular part $\vartheta(X_t)$ of the Brownian motion almost
surely converges as $t\to \zeta$.  

In 1976, Kifer \cite{Kifer:76} presented a stochastic proof 
that on Cartan-Hadamard manifolds with sectional curvature pinched between two
strictly negative constants and satisfying a certain additional technical condition, 
the Dirichlet problem at infinity can be uniquely solved.
The proof there was given in explicit terms for the two
dimensional case. The case of a Cartan-Hadamard manifold $(M,g)$ with pinched
negative curvature without additional conditions and arbitrary dimension was finally
treated in Kifer~\cite{Kifer:86}.

Independently of Anderson, in 1983, Sullivan \cite{Sullivan:83} 
gave a stochastic proof of
the fact that on a Cartan-Hadamard manifold with pinched negative curvature the
Dirichlet problem at infinity is uniquely solvable. 
The crucial point has been to prove that the harmonic measure
class is non-trivial in this case. 

\begin{thm}[Sullivan \cite{Sullivan:83}]
  The harmonic measure class on $S_\infty(M) = \partial(M\cup S_\infty(M))$ is
  positive on each non-void open set. In fact, if $x_i$ in $M$ converges to
  $x_\infty$ in $S_\infty(M)$, then the Poisson hitting measures $\mu_{x_i}$
  tend weakly to the Dirac mass at~$x_\infty$.
\end{thm}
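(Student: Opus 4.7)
First I would establish that Brownian motion exits at a well-defined point of the sphere at infinity. Under the upper bound $\Sect^M\le -b^2$, Prat's result gives transience and $r(X_t)\to\infty$ almost surely; the lower bound $-a^2\le\Sect^M$ then forces almost-sure convergence of the angular part $\vartheta(X_t)$ on $S^{n-1}$, so that $X_\zeta := \lim_{t\to\zeta} X_t$ exists in $S_\infty(M)$ $\P_x$-a.s.\ and $\mu_x(U) := \P_x\{X_\zeta\in U\}$ is a well-defined harmonic measure. Since $x\mapsto \mu_x(U)$ is a bounded harmonic function, the maximum principle yields the trichotomy $\equiv 0$, $\equiv 1$, or strictly $(0,1)$-valued; hence once the weak convergence $\mu_{x_i}\to\delta_{x_\infty}$ is proved, $\mu_x$ must charge every non-void open subset of $S_\infty(M)$, and the positivity assertion follows.

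By tightness, together with the fact that cone neighborhoods form a basis of the cone topology at $x_\infty$, the weak convergence reduces to showing $\mu_x(V)\to 1$ as $x\to x_\infty$, for every cone neighborhood $V$ of $x_\infty$. Fix such a $V$ and, using the pinched-curvature version of Choi's convex conic neighbourhood condition, choose a convex cone $C$ with $C^2$ boundary such that $\overline C\subset V$ and $x_\infty$ is interior to $\overline C\cap S_\infty(M)$. The central step is to construct a \emph{barrier}: a nonnegative $C^2$ function $w$ on $M\setminus C$, superharmonic there, with $w\ge 1$ on $\partial C$ and $w(x)\to 0$ as $x\to x_\infty$ in the cone topology. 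Granted such a $w$, optional sampling of the nonnegative bounded supermartingale $w(X_{t\wedge\tau})$, with $\tau$ the entry time into $\overline C$, gives $\P_x\{\tau<\zeta\}\ge 1-w(x)$, hence $\mu_x(V)\ge 1-w(x)\to 1$.

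I would build $w$ via a product ansatz $w(x) = F(\theta(x))\, G(r(x))$, with $r = \dist(x_0,\cdot)$ and $\theta$ an angular deviation from a geodesic ray $\gamma$ toward $x_\infty$, conveniently constructed from the Busemann function at $x_\infty$. Hessian comparison sandwiches $\nabla^2 r$ between $b\coth(br)$ and $a\coth(ar)$ times the orthogonal projection, so the upper bound $-b^2$ supplies strong radial outward drift while the lower bound $-a^2$ caps the angular diffusion. Choosing $G(r) = e^{-cr}$ with $c\in(0,b)$ and a bounded angular profile $F$ that exceeds $1$ on $\partial C$ then produces $\Delta w\le 0$; the vanishing $w(x)\to 0$ in the cone topology follows because $\theta$ stays bounded while $r\to\infty$, hence $G(r)\to 0$.

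The main obstacle is arranging $w$ to be uniformly superharmonic all the way out to infinity. The angular term in $\Delta w$ is of order $(F''/F)(G/r^2)$ with a coefficient controlled by the lower curvature bound, while the radial term is of order $-c^2 FG$ after using Hessian comparison for the drift; their balance depends sensitively on the opening angle of $C$, the decay rate $c$, and the angular profile $F$, and it is precisely this balance that requires both curvature bounds. Sullivan in fact bypasses the explicit barrier by exploiting the uniform convexity of horoballs at $x_\infty$ under pinched curvature and by comparing to a Kleinian-group model, but the analytic content is equivalent; the whole argument breaks down without the lower bound, which is exactly the regime investigated in the present paper.
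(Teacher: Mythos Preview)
The paper does not prove this theorem. It is stated in the Introduction purely as background, attributed to Sullivan \cite{Sullivan:83}, and no proof is given or even sketched; the text immediately moves on to Kendall's two-dimensional argument and then to the work of Hsu. There is therefore no ``paper's own proof'' against which to compare your proposal.

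As for the proposal itself: what you outline is closer in spirit to the Anderson/Choi barrier method than to Sullivan's original argument, and you say as much in your last paragraph. The sketch is plausible at the level of a strategy, but the barrier construction is left at the level of an ansatz: the claim that the product $F(\theta)G(r)$ with $G(r)=e^{-cr}$ can be made globally superharmonic under the pinched-curvature comparison is precisely the delicate point, and ``their balance depends sensitively on the opening angle of $C$, the decay rate $c$, and the angular profile $F$'' is an acknowledgment of a gap rather than a resolution of it. If you want a self-contained proof along these lines, you would need to carry out the Hessian comparison for the angular coordinate (not just for $r$) and verify the sign of $\Delta w$ explicitly; alternatively, cite Anderson \cite{Anderson:83} or Anderson--Schoen \cite{Anderson-Schoen:85} for the barrier, or follow Sullivan's horoball-convexity route directly. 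In any case none of this is required for the present paper, which only invokes the result.
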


In the special case of a Riemannian surface $M$ of negative curvature
bounded from above by a negative constant, Kendall \cite{Kendall:84} 
gave a simple stochastic proof
that the Dirichlet problem at infinity is uniquely solvable. 
He thereby used the fact that every geodesic on the Riemannian
surface ``joining'' two different points on the sphere at infinity divides the
surface into two disjoint half-parts. Starting from a point $x$ on $M$, with
non-trivial probability Brownian motion will eventually stay in one of the
two half-parts up to its lifetime. As this is valid for every geodesic and every
starting point $x$, the non-triviality of the harmonic measure class on
$S_\infty(M)$ follows.

Concerning the case of Cartan-Hadamard manifolds of arbitrary dimension
several results have been published how the pinched curvature assumption can
be relaxed such that still the Dirichlet problem at infinity for $M$ is
solvable, e.g.~\cite{Hsu-March:85} and \cite{Hsu:2003}. 

\begin{thm}[Hsu \cite{Hsu:2003}]
  Let $(M,g)$ be a Cartan-Hadamard manifold. The Dirichlet problem at infinity
  for $M$ is solvable if one of the following conditions is satisfied:
\begin{enumerate}
\item There exists a positive constant $a$ and a positive and nonincreasing
  function $h$ with $\int_0^\infty rh(r) \,dr < \infty$ such that
  $$
  - h(r(x))^2e^{2ar(x)} \le \Ric^M_x \text{ and } \Sect^M_x \le -a^2 \text{
    for all $x\in M$. }$$
\item There exist positive constants $r_0$, $\alpha>2$ and $\beta<\alpha-2$
  such that
  $$
  -r(x)^{2\beta}\le \Ric^M_x \text{ and } \Sect^M_x\le
  -\frac{\alpha(\alpha-1)}{r(x)^2} $$
  for all $x\in M$ with $r(x) \ge r_0$.
\end{enumerate}
\end{thm}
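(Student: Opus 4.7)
The plan is to reduce solvability of the Dirichlet problem at infinity to two probabilistic ingredients: (a) almost sure convergence of Brownian motion $X_t$ to a random point $X_\zeta\in S_\infty(M)$ as $t\uparrow\zeta$, and (b) weak convergence $\mu_{x_i}\to\delta_{x_\infty}$ of the harmonic measures whenever $x_i\to x_\infty\in S_\infty(M)$ in the cone topology. Given (a) and (b), the function $x\mapsto\int f(y)\,\mu_x(dy)$ of formula \eqref{uniq_sol_DP} provides a harmonic extension of any prescribed continuous boundary datum $f$.

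For (a), I would work in global polar coordinates $(r,\vartheta)$ centered at a base point $x_0$. Itô's formula gives $dr(X_t)=d\beta_t+\frac{1}{2}\Delta r(X_t)\,dt$ with $\beta$ a real Brownian motion. The Hessian/Laplacian comparison theorem applied to the upper sectional curvature bound yields a lower bound on $\Delta r$: under hypothesis (1), $\Delta r(x)\ge(n-1)a\coth(a\,r(x))\to(n-1)a$, producing linear escape $r(X_t)/t\to (n-1)a$ almost surely; under hypothesis (2), $\Delta r(x)\ge(n-1)\alpha/r(x)$ for $r(x)\ge r_0$, forcing $r(X_t)$ to grow at least like $\sqrt{t}$. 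In particular $X$ is transient. For the angular component I would compare with the rotationally symmetric model whose warping function $f$ satisfies $f''/f$ equal to the sectional-curvature upper bound: the angular martingale part of $X$ then has quadratic variation dominated by $\int_0^{\cdot}f(r(X_s))^{-2}\,ds$. Under (1) one has $f(r)\asymp e^{ar}$ together with linear escape, so this integral is almost surely finite; under (2), $f(r)\asymp r^{\alpha}$, and the $\sqrt{t}$ lower bound on $r(X_s)$ bounds the integrand by a constant multiple of $s^{-\alpha}$, integrable at infinity since $\alpha>2$.

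The main technical obstacle is control of the angular \emph{drift}, which vanishes in the symmetric model but on $M$ reflects the deviation of the Hessian of $r$ from its model values and, via the Bochner/Jacobi equations, depends on the full curvature tensor, in particular on the lower Ricci bound. I would write the drift as a model term (which converges thanks to the quadratic-variation estimate above) plus an error whose integrand is controlled by the Ricci lower-bound deficit divided by the volume form along the Brownian path. The hypotheses $\int_0^\infty r\,h(r)\,dr<\infty$ in (1) and $\beta<\alpha-2$ in~(2) are calibrated precisely so that, combined with the known growth rate of $r(X_s)$ and the upper sectional curvature bound, the integrated error is finite almost surely: in (1) the exponential volume growth forced by $\Sect\le-a^{2}$ neutralizes the factor $e^{2ar}$ in the Ricci deficit $h(r)^{2}e^{2ar}$, while in (2) the polynomial volume growth forced by $\Sect\le-\alpha(\alpha-1)/r^{2}$ dominates $r^{2\beta}$ precisely when $\beta<\alpha-2$. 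This provides the almost sure limit $X_\zeta\in S_\infty(M)$.

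Finally, for (b) I would adapt the barrier technique of Anderson--Choi--Sullivan. Near each point $x_\infty\in S_\infty(M)$, and inside any cone neighbourhood $V$ of $x_\infty$, one constructs a bounded non-negative $C^{2}$ superharmonic function $u_V$ that tends to $0$ along $M$ as $x\to x_\infty$ and is bounded below by $1$ outside a slightly larger cone. Under (1) or (2) such barriers are obtained by combining a radial comparison function (supplied by the upper sectional curvature bound) with an angular cut-off, the convex conic neighbourhood condition of Choi serving as the geometric prototype. Applying the optional stopping theorem to $u_V(X_t)$ and using $X_\zeta\in S_\infty(M)$ from step (a) gives $\P_x\{X_\zeta\in V\}\to1$ as $x\to x_\infty$, which is the asserted weak convergence $\mu_x\to\delta_{x_\infty}$ and completes the solution of the Dirichlet problem at infinity.
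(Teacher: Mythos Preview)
This theorem is quoted in the paper as a known result due to Hsu~\cite{Hsu:2003}; the paper provides no proof of its own, so there is nothing in the paper to compare your proposal against. The statement appears in the Introduction purely as background, illustrating how the pinched-curvature assumption can be relaxed while still guaranteeing solvability of the Dirichlet problem at infinity.

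That said, your outline is broadly in the spirit of Hsu's original probabilistic approach: reduce solvability to angular convergence of Brownian motion plus weak convergence of the exit measures, and use Laplacian comparison for the radial part together with curvature-controlled estimates on the angular process. One point deserves caution. Your treatment of the angular drift is vague at the crucial step: the ``error term'' you describe is not simply a Ricci deficit divided by a volume element, and the cancellation you invoke (exponential volume growth neutralising the $e^{2ar}$ factor, or polynomial growth dominating $r^{2\beta}$) does not by itself yield an almost surely finite integral along the Brownian path. In Hsu's argument the control of the angular part goes through a more careful index-form estimate for Jacobi fields along radial geodesics, and the integrability conditions $\int_0^\infty r\,h(r)\,dr<\infty$ and $\beta<\alpha-2$ enter through that estimate rather than through a volume comparison. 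Your step~(b) via barriers is also somewhat circular as stated: the convex conic neighbourhood condition is a \emph{hypothesis} in Choi's theorem, not something available for free here, and Hsu's route to weak convergence is again probabilistic (showing directly that $\P_x\{X_\zeta\in V\}\to 1$ from the angular estimates), rather than via construction of superharmonic barriers. So while the architecture of your sketch is right, the two load-bearing estimates would need to be replaced by the actual Jacobi-field computations in~\cite{Hsu:2003}.
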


It was an open problem for some time whether the existence of a strictly
negative upper bound for the sectional curvature 
could already be a sufficient condition for the solvability of
the Dirichlet problem at infinity as it is true in dimension~2. 
In 1994 however, Ancona \cite{Ancona:94} constructed a Riemannian manifold with sectional
curvatures bounded above by a negative constant such that the Dirichlet
problem at infinity for $M$ is not solvable. 
For this manifold Ancona discussed the
asymptotic behaviour of Brownian motion. 
In particular, he showed that Brownian
motion almost surely exits from $M$ at a single point $\infty_M$ on the sphere
at infinity. 
Ancona did not deal with the question whether $M$ carries  
non-trivial bounded harmonic functions.
 
Borb\'ely \cite{Borbely:98}
gave another example of a Cartan-Hadamard manifold 
with curvature bounded above by a strictly negative constant,
on which the Dirichlet problem at infinity is not solvable. 
Borb\'ely does not discuss Brownian motion on this manifold, but 
he shows using analytic methods that his manifold supports 
non-trivial bounded harmonic functions.

This paper aims to give a detailed analysis of manifolds of this type 
and to answer several questions. 
It turns out that the manifolds of Ancona and Borb\'ely share quite 
similar properties, at least from the probabilistic point of view. 
On both manifolds the angular behaviour of Brownian motion degenerates 
to a single point, as Brownian motion drifts to infinity. In particular, 
the Dirichlet problem at infinity is not solvable. 
Nevertheless both manifolds possess a wealth of 
non-trivial bounded harmonic functions, which come however from 
completely different reasons than in the pinched curvature case.
Since Borb\'ely's manifold is technical easier to handle, we restrict 
our discussion to this case. 
It should however be noted that all essential 
features can also be found in Ancona's example. 

Unlike Borb\'ely who used
methods of partial differential equations to prove
that his manifold provides an example of a non-Liouville manifold for that the
Dirichlet problem at infinity is not solvable, we are interested in a complete
stochastic description of the considered manifold.
In this paper we give a full description of the Poisson boundary 
of Borb\'ely's manifold by characterizing all shift-invariant events for 
Brownian motion. The manifold is of dimension 3, and  
the $\sigma$-algebra of invariant events is generated by two random variables. 
It turns out that, in order to see the full Poisson boundary, 
the attracting point at infinity to which all Brownian paths converge 
needs to be ``unfolded'' into the 
$2$-dimensional space $\R\times S^1$. 

The asymptotic behaviour of Brownian motion on this manifold 
is in sharp contrast to the case of a
Cartan-Hadamard manifold of pinched negative curvature. Recall that in the 
pinched curvature case the angular part
$\vartheta(X)$ of $X$ carries all relevant information and the limiting angle 
$\lim_{t\to\infty}\vartheta(X_t)$ generates the shift-invariant $\sigma$-field of $X$. All
non-trivial information to distinguish Brownian paths for large times is 
given in terms of the angular projection of $X$ onto $S_\infty(M)$, where only the limiting angle matters.
As a consequence, any bounded harmonic function comes from a solution of the 
Dirichlet problem at infinity, which means on the other hand that any bounded harmonic function 
on $M$ has a continuous continuation to the boundary at infinity.

More precisely, denoting by $h(M)$ the Banach space of bounded harmonic
functions on a Cartan-Hadamard manifold $M$, we have the following result of
Anderson.

\begin{thm}[\cite{Anderson:83}]
  Let $(M,g)$ be a Cartan-Hadamard manifold of dimension $d$, whose sectional
  curvatures satisfy $-a^2\le \Sect^M_x\le -b^2$ for all $x\in M$. Then the
  linear mapping
\begin{equation}
\begin{split}
  P\colon L^\infty(S_\infty(M),\mu) &\to h(M), \\
  f &\mapsto P(f), \quad P(f)(x):= \int_{S_\infty(M)}f d\mu_x
\end{split}
\end{equation}
is a norm-nonincreasing isomorphism onto $h(M)$.
\end{thm}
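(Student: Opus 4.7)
The plan is to combine the probabilistic characterization recorded in \eqref{Lemma:exitset} with Sullivan's theorem and Prat's convergence result. Under the pinched bounds $-a^2\le\Sect^M_x\le -b^2$, Brownian motion on $M$ is transient and $X_\zeta:=\lim_{t\uparrow\zeta}X_t$ exists $\P_x$-almost surely in $S_\infty(M)$, so the harmonic measures $\mu_x$ are Borel probability measures on $S_\infty(M)$. Sullivan's theorem ensures that all $\mu_x$ lie in a single measure class, so $L^\infty(S_\infty(M),\mu)$ is unambiguously defined. The map $P$ already takes values in $h(M)$ and satisfies $\|P(f)\|_\infty\le\|f\|_\infty$: harmonicity comes from the strong Markov property applied at exit times of small geodesic balls, and the norm bound is immediate since each $\mu_x$ is a probability measure.

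For surjectivity I would take an arbitrary $u\in h(M)$ and apply martingale convergence to the bounded martingale $u\circ X$, obtaining a limit $U_\infty$ that represents $u$ in $b\SF_{\hbox{\sevenrm inv}}/_\sim$ under \eqref{Lemma:exitset}. The core step is to produce a Borel function $f\in L^\infty(S_\infty(M),\mu)$ with $U_\infty=f(X_\zeta)$ $\P_x$-almost surely; once this is achieved, the tower property gives
\begin{equation*}
u(x)=\E_x[U_\infty]=\E_x[f(X_\zeta)]=\int_{S_\infty(M)}f\,d\mu_x=P(f)(x).
\end{equation*}
This reduces the problem to the identification $\SF_{\hbox{\sevenrm inv}}=\sigma(X_\zeta)$ modulo $\P_x$-null sets. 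The inclusion $\sigma(X_\zeta)\subseteq\SF_{\hbox{\sevenrm inv}}$ is immediate; for the converse one exploits the pinched negative curvature to couple two Brownian motions conditioned to have the same exit point in $S_\infty(M)$ so that their shift-invariant $\sigma$-fields collapse to $\sigma(X_\zeta)$.

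For injectivity, suppose $P(f)=0$. If $f$ is continuous on $S_\infty(M)$, Sullivan's weak convergence $\mu_{x_i}\to\delta_{x_\infty}$ as $x_i\to x_\infty$ shows that $P(f)$ extends continuously to $S_\infty(M)$ with boundary values $f$, forcing $f\equiv0$. For general $f\in L^\infty(S_\infty(M),\mu)$ one recovers $\mu$-a.e.\ vanishing either by approximating in $L^1(S_\infty(M),\mu)$ by continuous functions (using that the kernels $x\mapsto\mu_x$ are mutually absolutely continuous with a common reference measure, so $P$ also acts continuously on $L^1$) or by invoking a non-tangential Fatou-type theorem to recover $f$ from the boundary behaviour of $P(f)$.

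The main obstacle is the identification $\SF_{\hbox{\sevenrm inv}}=\sigma(X_\zeta)$. Geometrically this expresses the coincidence of the Poisson boundary of $M$ with its geometric sphere at infinity, and it is precisely this statement which \emph{fails} on the Borb\'ely manifold investigated in the rest of the paper: there $X_\zeta$ is a single deterministic point of $S_\infty(M)$, so $\sigma(X_\zeta)$ is trivial, whereas $\SF_{\hbox{\sevenrm inv}}$ still carries non-trivial $\R\times S^1$-valued information. Under pinched curvature this identification is a genuine ergodic statement, typically proved by a coupling argument whose success depends crucially on the uniform upper bound $-b^2<0$ on the sectional curvature.
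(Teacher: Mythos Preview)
The paper does not contain a proof of this theorem. It is stated in the Introduction as a result of Anderson, cited from \cite{Anderson:83}, and is quoted only for context: it describes the pinched-curvature situation against which the Borb\'ely example is to be contrasted. There is therefore no ``paper's own proof'' to compare your proposal against.

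That said, your outline is a reasonable sketch of the probabilistic route to Anderson's theorem, and you correctly isolate the crux: the identification $\SF_{\hbox{\sevenrm inv}}=\sigma(X_\zeta)$ up to null sets, i.e.\ that the Poisson boundary coincides with the geometric sphere at infinity. You are also right that this is exactly what fails on the Borb\'ely manifold, and that under pinched curvature it is a genuine ergodic statement requiring a separate argument (coupling, or the potential-theoretic machinery of Anderson--Schoen/Ancona). What you have written is not a proof of that identification but an indication of where the work lies; since the present paper neither claims nor supplies a proof of Anderson's theorem, that is entirely appropriate here.
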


In the situation of Borb\'ely's manifold 
Brownian motion almost surely exits from
the manifold at a single point of the sphere at infinity, independent of its
starting point $x$ at time $0$. 
In particular, all harmonic measures $\mu_x$ are trivial, and  
harmonic functions of the form $P(f)$ are necessarily constant. 
On the other hand, we are going to show that this manifold supports a huge variety of
non-trivial bounded harmonic functions, necessarily without continuation 
to the sphere at infinity. 
It turns out that the richness of harmonic functions is related to two different kinds 
of non-trivial exit sets for the Brownian motion:

1. Non-trivial shift-invariant information is given in terms of the direction 
along which Brownian motion approaches the point at the sphere at infinity. 
Even if there is no contribution from the limiting angle itself, angular sectors 
about the limit point  allow to distinguish Brownian paths
for large times. 

2. As Brownian motion converges to a single point, in appropriate coordinates 
the fluctuative components (martingale parts) of Brownian motion for large times
are relatively small compared to its drift components. 
Roughly speaking, this means that for large times Brownian motion follows the 
integral curves of the deterministic dynamical system which is given by neglecting 
the martingale components in the describing stochastic differential equations. 
We show that this idea can be made precise by constructing a deterministic vector field 
on the manifold $M$ such that the corresponding flow induces a foliation of $M$ and
such that Brownian motion exits $M$ asymptotically along the leaves of this foliation. 

To determine the Poisson boundary the main problem is then to show that 
there are no other non-trivial exit sets.
This turns out to be the most difficult point and will be done by purely 
probabilistic arguments using time reversal of Brownian motion. 
To this end, the time-reversed Brownian motion starting on the exit boundary 
and running backwards in time is investigated.

It is interesting to note that on the constructed manifold, despite of diverging curvature, 
the harmonic measure has a density (Poisson kernel) 
with respect to the Lebesgue measure. Recall that in the pinched curvature case 
the harmonic measure may well be singular with respect to the surface 
measure on the sphere at infinity 
(see \cite{Kifer:76}, \cite{Ledrappier:87}, \cite{Katok:88}, 
\cite{Cranston-Mueller:87} for results in this direction).
Typically it is the fluctuation of the geometry at infinity which prevents harmonic 
measure from being absolutely continuous. 
Pinched curvature alone does in general not allow to control the angular derivative 
of the Riemannian metric, when written in polar coordinates.

The paper  is organized as follows. 
In Section \ref{Section2} we give the construction of our manifold 
$M$ which up to minor technical modifications is the same as in Borb\'ely~\cite{Borbely:98}: 
we define $M$ as the warped product
$$M:= (H\cup L)\times_g S^1,$$
where $L$ is a unit-speed geodesic in the
hyperbolic space $\H^2$ of constant sectional curvature $-1$ and $H$ is one
component of $\H^2 \setminus L$. The Riemannian metric $\gamma$ on $M$ is the
warped product metric of the hyperbolic metric on $H$ coupled with the
(induced) Euclidean metric on $S^1$ via the function $g\colon H\cup L \to \R_+$,
$$ds_M^2 = ds_{\H^2}^2 + g\cdot ds_{S^1}^2.$$
By identifying points $(\ell,\alpha_1)$ and $(\ell,\alpha_2)$ with $\ell \in
L$ and $\alpha_1,\alpha_2 \in S^1$ and choosing the metric ``near'' $L$ equal to
the hyperbolic metric of the three dimensional hyperbolic space $\H^3$ the
manifold $M$ becomes complete, simply connected and rotationally symmetric
with respect to the axis $L$. We specify conditions the function $g$ has to
satisfy in order to provide a complete Riemannian metric on $M$ for which the sectional
curvatures are bounded from above by a negative constant, and such that the
Dirichlet problem at infinity is not solvable.  
As the construction of the function $g$ is described in detail
in \cite{Borbely:98} we mainly sketch the ideas 
and refer to Borb\'ely for detailed proofs. We explain which properties of
$g$ influence the asymptotic behaviour of the Brownian~paths. 

The probabilistic consideration of the manifold $M$ starts in Section~\ref{section3}. 
We specify the defining stochastic differential equations
for the Brownian motion $X$ on $M$, where we use the component processes $R$,
$S$ and $A$ of $X$ with respect to the global coordinate system
$\{(r,s,\alpha)|\, r\in \R_+,\ s\in \R,\ \alpha\in [0,2\pi)\}$ for $M$.
The non-solvability of the Dirichlet problem at infinity is then 
an immediate consequence of the asymptotic behaviour of the Brownian motion 
(see Theorem \ref{BMconverges}).

It is obvious that the asymptotic behaviour of the Brownian motion
on $M$ is the same as in the case of the manifold of Ancona. 
In particular, it turns out that the
component $R$ of the Brownian motion $X$ almost surely tends to infinity as
$t\to \zeta$. It is a remarkable fact that in contrast to the manifold of
Ancona, where the Brownian motion almost surely has infinite lifetime, we can
show that on $M$ the lifetime $\zeta$ of the Brownian motion is
almost surely finite.

In Section \ref{nontrivialshiftinvariant} we start with the construction of non-trivial 
shift-invariant events. 
To this end, we consider a time change of the Brownian motion such that the
drift of the component process $R$ of $X$ equals $t$, i.e.~such that the time
changed component $\widetilde R$ behaves then comparable to the deterministic curve
$\R_+ \to \R_+, \,t\mapsto r_0 + t$. 
We further show that for a certain function $q$, the process
$${\widetilde Z}_t := \widetilde S_t - \int_0^{\widetilde R_t}q(r)\,dr $$
almost surely converges in $\R$ when $t\to \widetilde \zeta$.  

It turns out that the non-trivial
shift-invariant random variable $A_{\zeta}:= \lim_{t\to\zeta}A_t$ can be
interpreted as one-dimensional angle which indicates from which direction the
projection of the Brownian path onto the sphere at infinity attains the point~$L(\infty)$. 
The shift-invariant random variable
${\widetilde Z}_{\widetilde\zeta}$ indicates along which surface of rotation $C_{s_0}\times
S^1$ inside of $M$ the Brownian paths eventually exit the manifold~$M$. Thereby
$C_{s_0}$ is the trajectory starting in $(0,s_0)\in\R_+\times\R$ of the vector
field $$
V := \frac{\partial}{\partial r} + q(r) \frac{\partial}{\partial s}.
$$

Section \ref{Section5} finally gives a description of the Poisson boundary. 
The main work in this part is to verify 
that there are no other invariant events than the specified ones. 
This is achieved by means of arguments relying on the time-inversed process. 

As a consequence a complete description of all non-trivial bounded 
harmonic functions on the manifold $M$ is obtained. 
Concerning ``boundary Harnack inequality'' the class of 
non-constant bounded harmonic functions shares an amazing feature:
on any neighbourhood in $M$
of the distinguished point at the horizon at infinity, 
a non-trivial bounded 
harmonic function on $M$ attains each value lying strictly 
between the global minimum and the global maximum of the function.

Finally it is worth noting that our manifold also provides an example where 
the $\sigma$-field of terminal events is strictly bigger than the 
$\sigma$-field of shift-invariant events.

\section{Construction of a CH manifold with a sink of curvature at infinity}\label{Section2}
\setcounter{equation}0

Let $L \subset \H^2$ be a fixed unit speed geodesic in the hyperbolic half
plane $$\H^2 = \{(x,y)\in\R^2\mid y>0\}$$
equipped with the hyperbolic metric
$ds^2_{\H^2}$ of constant curvature $-1$.  For our purposes one can assume
without loss of generality $L:=\{(0,y)|y>0\}$ to be the positive $y$-axis.
Let $H$ denote one component of $\H^2\setminus L$ and define a Riemannian
manifold $M$ as the warped product:
\begin{displaymath}
  M:=(H\cup L)\times_g S^1,
\end{displaymath}
with Riemannian metric $$ds_M^2= ds^2_{\H^2} + g\cdot ds^2_{S^1},$$
where
$g:H\cup L\to \R_+$ is a positive ${C}^\infty$-function to be determined
later. By identifying points $(\ell,\alpha_1)$ and $(\ell,\alpha_2)$ with
$\ell\in L$ and $\alpha_1,\alpha_2\in S^1$, we make $M$ a simply connected
space.

We introduce a system of coordinates $(r,s,\alpha)$ on $M$, where for a point
$p\in M$ the coordinate $r$ is the hyperbolic distance between $p$ and the
geodesic $L$, i.e.~the hyperbolic length of the perpendicular on $L$ through
$p$. The coordinate $s$ is the parameter on the geodesic
$\{L(s):\,s\in{]{-\infty},\infty[}\,\}$, i.e.~the length of the geodesic segment
on $L$ joining $L(0)$ and the orthogonal projection $L(s)$ of $p$ onto $L$.
Furthermore, $\alpha \in [0,2\pi[$ is the parameter on $S^1$ when using the
parameterization $e^{i\alpha}$. We sometimes take $\alpha \in \R$, in
particular when considering components of the Brownian motion, thinking of
$\R$ as the universal covering of $S^1$.

In the coordinates $(r,s,\alpha)$ the Riemannian metric on $M\setminus L$
takes the form
\begin{equation}\label{RiemMetric}
  ds_M^2= dr^2+h(r)ds^2+g(r,s)d\alpha^2
\end{equation}
where $h(r) = \cosh^2(r)$.
\begin{figure}
  \includegraphics[height=5.5cm, width=10cm]{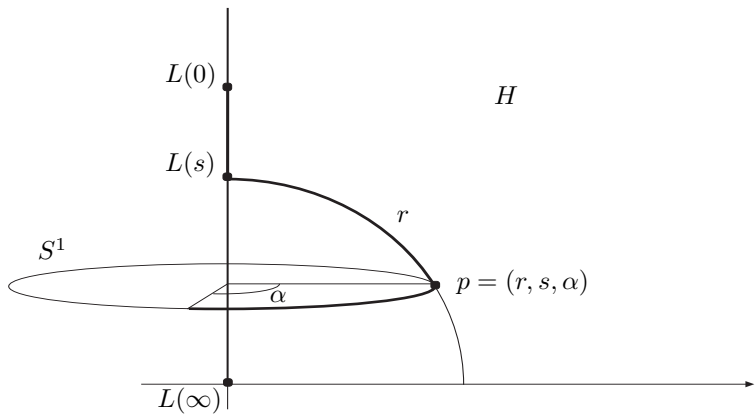}
\caption{Coordinates for the Riemannian manifold $M$}
\label{figure1}
\end{figure}

Let $g(r,s):=\sinh^2(r)$ for $r<{1}/{10}$ (the complete definition is
given below), then the above metric smoothly extends to the whole manifold
$M$, where $M$ is now rotationally symmetric with respect to the axis $L$ and
for $r<{1}/{10}$ isometric to the three dimensional hyperbolic space
$\H^3$ with constant sectional curvature $-1$, cf.~\cite{Borbely:98}. From
that it is clear that the Riemannian manifold $(M,g)$ is complete.

\subsection{Computation of the sectional curvature}\label{subsection21}
From now on we fix the basis $$\p_1:=\frac{\p}{\p r},\  
\p_2:= \frac{\p}{\p s},\ \p_3:=\frac{\p}{\p \alpha}$$ for the tangent space $T_pM$ in $p\in M$.
Herein the Christoffel symbols of the Levi-Civita connection can be computed
as follows -- the indices refer to the corresponding tangent vectors of the
basis:
\begin{alignat*}{3}
  \Gamma^1_{22}&=-\frac12 h'_r,&\qquad \Gamma^2_{12}&=\Gamma^2_{21}=
  \frac{h'_r}{2h},&\qquad
  \Gamma^3_{13}&=\Gamma^3_{31}=\frac{g'_r}{2g}, \\
  \Gamma^1_{33}&=-\frac12 g'_r,&\qquad \Gamma^2_{33}&=-\frac{g'_s}{2h},&
  \qquad \Gamma^3_{23}&=\Gamma^3_{32}=\frac{g'_s}{2g},
\end{alignat*}
all others equal $0$. 
Herein $g'_r$ denotes the partial derivative of the function $g$ with 
respect to the variable r, $g'_s$ the partial derivative with respect to $s$, etc.\\
For the computation of the sectional curvatures $\Sect^M$ write
$X=(x_1,x_2,x_3)$ and $Y=(y_1,y_2,y_3)$ for tangent vectors $X,Y\in T_pM$ in
terms of the basis $\p_1,\p_2,\p_3$. Then one gets
\begin{equation*}
\begin{split}
  \langle R(X,Y)Y,X\rangle &=
  \underbrace{\left(-\frac12h''_{rr}+\frac14\frac{(h'_r)^2}{h}\right)}_{=:A}(x_1y_2-x_2y_1)^2 \\
  &+
  \underbrace{\left(-\frac12g''_{rr}+\frac14\frac{(g'_r)^2}{g}\right)}_{=:B}(x_1y_3-x_3y_1)^2 \\
  &+
  \underbrace{\left(-\frac12g''_{ss}-\frac14g'_rh'_r+\frac14\frac{(g'_s)^2}{g}\right)}_{=:C}(x_2y_3-x_3y_2)^2\\
  &+2\cdot\underbrace{\left(-\frac12g''_{rs}+\frac14\frac{g'_sh'_r}{h}+\frac14\frac{g'_rg'_s}{g}\right)}_{=:D}
  (x_1y_3-x_3y_1)(x_2y_3-x_3y_2).
\end{split}
\end{equation*} 
as well as
\begin{equation*}
\| X\land Y\|^2=
h(x_1y_2-x_2y_1)^2+g(x_1y_3-x_3y_1)^2+gh(x_2y_3-x_3y_2)^2. 
\end{equation*}
We conclude that the manifold $M$ has strictly negative
sectional curvature, i.e. $$-k^2\geq\Sect^M(\text{span}\{X,Y\})= \frac{\langle
  R(X,Y)Y,X\rangle}{\| X\land Y\|^2}$$ for some $k>0$, all $X,Y\in T_pM$ and all
$p\in M$, if and only if the following inequalities hold:
\begin{align}\label{eq:11}
  \frac12h''_{rr}-\frac14\frac{(h'_r)^2}{h}&\geq k^2h,\\ 
  \label{eq:12} \!\!\!\frac12g''_{rr}-\frac14\frac{(g'_r)^2}{g}&\geq
  k^2g,\\ \label{eq:13}
  \frac12g''_{ss}+\frac14g'_rh'_r-\frac14\frac{(g'_s)^2}{g}&\geq
  k^2gh,\\ \notag
\frac{1}{g^2h}\left(-\frac12g''_{rs}
    +\frac14\frac{g'_sh'_r}{h}+\frac14\frac{g'_sg'_r}{g}\right)^2&\leq\left(
    \frac{g''_{rr}}{2g}-\frac{(g'_r)^2}{4g^2}-k^2\right)\\
  &\qquad\times\left(\frac{g''_{ss}}{2gh}+\frac{g'_rh'_r}{4gh}-
    \frac{(g'_s)^2}{4g^2h}-k^2\right).\label{eq:14}
\end{align}
This is explained by the fact that the quadratic form
\begin{displaymath}
  q(X,Y,Z):=(A+k^2h)X^2+(B+k^2g)Y^2+(C+k^2gh)Z^2 + 2DYZ
\end{displaymath}
is non-positive for all $X,Y,Z\in\R$ if and only if
\begin{displaymath}
-A\geq k^2h \,\,\, \text{and} \,\,\, -B\geq k^2g\,\,\, \text{and}\,\,\,-C\geq k^2gh
\,\,\,\text{and}\,\,\, D^2\leq (B+k^2g)(C+k^2gh).
\end{displaymath}

\subsection{The sphere at infinity $S_\infty(M)$}\label{section22}

As it is obvious that, for $(s,\alpha) \in \R\times{[\,0,2\pi[}$ fixed, the
curves
\begin{displaymath}
\gamma_{s\alpha}\colon \R_+\to M,\quad r\mapsto  (r,s,\alpha)
\end{displaymath}
form a foliation of $M$ by geodesic rays, we can describe the sphere at
infinity $S_\infty(M)$ as the union of the ``endpoints'' (i.e.~equivalence
classes) $\gamma_{s\alpha}(+\infty)$ of all geodesic rays $\gamma_{s\alpha}$
foliating $M$ together with the equivalence classes $L(+\infty)$ and
$L(-\infty)$ determined by the geodesic axis $L$ of $M$, which sums up to:
\begin{displaymath}
  S_\infty(M)=L(+\infty)\cup \big\{\gamma_{s\alpha}(+\infty)\,|\,(s,\alpha) \in
  \R\times{[\,0,2\pi[}\big\} \cup L(-\infty).
\end{displaymath}

\begin{figure}
  \includegraphics[height=5.5cm, width=8cm]{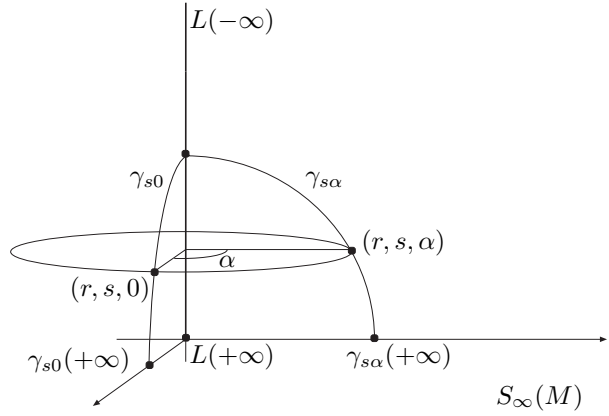}
\caption{Sphere at Infinity $S_\infty(M)$}
\label{figure2}
\end{figure}

This also explains why it suffices to show that the $s$-component $S_t$ of the
Brownian motion $X_t$ converges to $+\infty$ as $t$ approaches the 
lifetime $\zeta$ of the Brownian motion, if we want to show that 
$X_t$ converges for $t \to \zeta$ to the single point
$L(+\infty)$ on the sphere $S_\infty(M)$ at infinity.

\subsection{Properties of the function $g$}\label{section23}

We give a brief description of the properties which the warped product
function $g\colon \R_+\times \R \to \R_+$ in \eqref{RiemMetric}
has to satisfy to provide an example of a Riemannian manifold where the
Dirichlet problem at infinity is not solvable whereas there exist non-trivial
bounded harmonic functions. We give the idea how to construct such a
function $g$  and refer to Borb{\'e}ly \cite{Borbely:98} for further details.

\begin{lemma}\label{propofg}
  There is a ${C}^\infty$-function $g\colon \R_+\times \R \to \R_+$, $(r,s) \mapsto
  g(r,s)$ satisfying the following properties:
 \begin{enumerate}
 \item\label{prop1} $g'_r\geq 0$ and $g'_s\geq 0$.
 \item\label{prop2} $g(r,s) = \sinh^2(r)$ for $r<1/{10}$, and for $r\geq
   1/{10}$ it holds that:
   \begin{displaymath}
     g'_r\geq h'_r g \;\;\text{ and }\;\; \frac12g''_{rr} -
     \frac14\frac{(g'_r)^2}{g} \geq \frac{h'_rg'_r}{8}.
   \end{displaymath}
 \item\label{prop3} Denoting $p(r,s):={g'_s}/({g'_rh})$ one has $(ph)'_s
   \geq 0$ and for all $s\in \R$:
   \begin{displaymath}
     \int_0^\infty p(r,s) \,dr = \infty. 
   \end{displaymath} 
 \item\label{prop4} The function $p(r,s)$ satisfies:
  \begin{displaymath}
    p\leq \frac 1{1000},\ \,p'_s\leq \frac1{1000}, \ |\,p'_r\,|\leq \frac5{1000}
    \ \text{ and }\ pp'_rh^2<\frac{h'_r}{1000}.
  \end{displaymath}
\item\label{prop5} We further need that 
$$
(ph)'_r\geq 0, \quad (ph)''_{rr} \geq 0
$$
and that there exists $\e\in{]0,1/4[}$ such that 
  \begin{equation}
\label{E56}
    (ph)''_{rr}(r) \geq \f{\e}{p(r,0)h(r)}\ \ \text{for all}\ \ r\ge 2.
  \end{equation}
\item\label{prop6} The function $p(r,s)$ finally has to fulfill:
  \begin{displaymath}
     \int_{r_0}^\infty\frac1{p(r,s)h(r)}\,dr = \infty
  \end{displaymath}
  for every $r_0>0$ and every $s\in\R$.
 \end{enumerate}
\end{lemma}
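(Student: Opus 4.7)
\medskip

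The plan is to build $g$ by explicit construction, in two stages. I would first fix a pure radial profile $f(r)$, and then introduce the $s$-dependence by a small multiplicative perturbation, writing
$$g(r,s) = f(r)\bigl(1 + u(r,s)\bigr),$$
with $u \equiv 0$ for $r < 1/10$ (so $g = \sinh^2 r$ there, as required in (\ref{prop2})), $u$ smooth, non-negative, and non-decreasing in $s$. Property (\ref{prop1}) will then be automatic once $f'_r\ge 0$ and $u'_s\ge 0$. The advantage of this ansatz is that the conditions (\ref{prop1})--(\ref{prop2}) decouple from the conditions (\ref{prop3})--(\ref{prop6}): the former become inequalities for $f$ alone (up to error terms linear in $u$ that we control by taking $u$ small), while the latter become conditions on the quotient $p = g'_s/(g'_r h)$ which, to leading order, equals $f\,u'_s/(f'_r\, h)$.

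\smallskip

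For the radial profile I would take $f(r)$ equal to $\sinh^2 r$ on $[0,1/10]$ and, past $1/10$, let $f$ grow rapidly enough that
$$f'_r \ge h'_r f, \qquad \tfrac{1}{2}f''_{rr} - \tfrac{1}{4}(f'_r)^2/f \ge \tfrac{1}{8}h'_r f'_r$$
hold. Rewriting the second inequality as a condition on $(\log f)''$ shows that it suffices to take $f$ of the form $\exp\bigl(\int_0^r a(t)\,dt\bigr)$ past $1/10$, for a smooth $a$ that dominates $h'_r$ with enough margin; smooth gluing at $r = 1/10$ is done with a standard partition of unity argument, and the inequalities are preserved under sufficiently small multiplicative perturbations, so they continue to hold for the full $g$ once $u$ is chosen small.

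\smallskip

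The $s$-modulation $u$ is then designed by first picking $p(r,s)$ directly and recovering $u$ from it. The smallness bounds in (\ref{prop4}) restrict $p,p'_r,p'_s$ pointwise, while (\ref{prop3}) and (\ref{prop6}) give competing divergence requirements: the former wants $p$ not to decay too fast as $r\to\infty$, the latter wants $ph$ not to grow too fast. A balanced choice with $p(r,s) h(r)$ of slow polynomial growth in $r$ (adjusted in $s$ by a bounded monotone factor so that $(ph)'_s\ge 0$) satisfies both, and the monotonicity $(ph)'_r\ge 0$ and $(ph)''_{rr}\ge 0$ in (\ref{prop5}) can be built into this choice by design. From $p$ one reconstructs $u$ up to a free additive function by solving $u'_s = p(r,s) h(r)(f'_r/f + u'_r)/(1+u)$; since the right-hand side depends only weakly on $u$ (due to its smallness), this is a mild nonlinear equation that can be solved by a Banach fixed-point argument in a ball of small $C^k$ functions, after prescribing $u(r,0) = 0$.

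\smallskip

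The main obstacle is the quantitative lower bound \eqref{E56} in (\ref{prop5}), $(ph)''_{rr}(r) \ge \e/(p(r,0)h(r))$ for $r\ge 2$: this is not a consequence of the other bounds but a separate constraint that forces a specific convex shape on $ph$. It has to be installed by hand in the initial choice of $p(r,s)$, and then one must re-verify that all the other inequalities survive. Once the explicit formulas for $f$ and $p$ are in place, properties (\ref{prop1})--(\ref{prop6}) reduce to a finite list of elementary one-variable estimates, checked directly; as the statement asserts only existence, I would follow Borb\'ely \cite{Borbely:98} for the fine-tuning of the constants and refer to that paper for the routine verifications.
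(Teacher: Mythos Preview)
Your perturbative ansatz $g(r,s)=f(r)\bigl(1+u(r,s)\bigr)$ with $u$ small cannot work: the function $g$ that the lemma asks for is \emph{not} a small multiplicative perturbation of any purely radial profile. Property~(\ref{prop2}) forces $g'_r/g\ge h'_r$, hence from the defining relation $g'_s=p\,h\,g'_r$ one gets $(\log g)'_s = p\,h\,(g'_r/g)\ge p\,h\,h'_r$. For $r$ fixed large and $s$ large (where $p(r,s)=p_0(r)/2$ by construction) this gives $(\log g)'_s\ge c\,p_0(r)\,h(r)^2>0$, so $g(r,s)/g(r,0)\to\infty$ as $s\to\infty$. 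Consequently $u(r,s)=g(r,s)/f(r)-1$ is unbounded in~$s$, and no Banach fixed-point argument in a small $C^k$-ball can produce it. (There is also an algebra slip: the correct equation reads $u'_s=p\,h\bigl((f'_r/f)(1+u)+u'_r\bigr)$, not divided by $1+u$.) The statement ``the inequalities are preserved under sufficiently small multiplicative perturbations'' is therefore vacuous here.

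The paper (following Borb\'ely) proceeds along entirely different lines. One first designs $p(r,s)=\chi(r,s)\,p_0(r)$ directly: $p_0$ is built piecewise on intervals $[r_n,r_{n+1}]$, with $p_0h$ solving $y''=1/(2y)$ on the pieces $[r_{2n-1},r_{2n}]$ (this secures both $(p_0h)''\ge\e/(p_0h)$ and $\int dr/(p_0h)=\infty$) and $p_0=c_n/r$ on the pieces $[r_{2n},r_{2n+1}]$ (this secures $\int p_0\,dr=\infty$); the cutoff $\chi(r,s)=\xi(s+\ell(r))$ makes $p$ vanish on the region $\SD=\{s<-\ell(r)\}$. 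Then $g$ is obtained \emph{exactly}, not perturbatively, by solving the transport equation $g'_s=p\,h\,g'_r$ via characteristics: $g$ is constant along the integral curves of $\partial_s-(ph)\partial_r$, and one prescribes $g=g_0(r)$ on the region $\{p=0\}$, where $g_0$ is essentially the radial profile you describe. This yields the closed expression $g(r,s)=g_0(f(r,s))$ for an explicit flow map $f$, and the verification of properties (\ref{prop1})--(\ref{prop6}) reduces to checking the properties of $p_0$, $\chi$, and $g_0$ separately. No smallness is invoked anywhere.
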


The following remark summerizes briefly the significance of these conditions.

\begin{remark}[Comments on the properties listed above]\hfill\hfill
\begin{enumerate}
\item[1.] As mentioned in the definition of the Riemannian metric on $M$,
  property~(\ref{prop2}) assures that the Riemannian metric smoothly extends to
  the geodesic $L$ and that curvature condition (\ref{eq:12}) is satisfied
  for a suitable~$k$.
  
\item[2.] The conditions in (\ref{prop4}) stated for $p(r,s)$ assure validity of the
  curvature condition (\ref{eq:14}) -- at least for $k=1/{1000}$.
  
\item[3.] The integral condition \eqref{prop3} forces Brownian motion
  $X=(R,S,A)$ on $M$ to converge to the single point
  $L(\infty)\in S_\infty(M)$ as $t\to \zeta$ which as an immediate consequence implies 
  non-solvability of the Dirichlet problem at infinity for $M$. The given
  condition assures that the drift term in the stochastic differential
  equation for the component $S$ of the Brownian motion $X$ compared with the
  drift term appearing in the defining equation for the component $R$ 
  grows ``fast enough'' to ensure $S_t$ going to $\infty$ as
  $t$ approaches the lifetime of the Brownian motion. 

  In \cite{Borbely:98}, Lemma 2.1, Borb{\'e}ly uses this condition to show
  that the convex hull of any neighbourhood of $L(\infty)$ is the whole
  manifold $M$. This is a natural first step in the construction of a manifold
  for which the Dirichlet problem at infinity is not solvable (cf.~the
  Introduction and \cite{Choi:84}, \cite{Borbely:98}).
  
\item[4.] Property (\ref{prop6}) is needed for the Brownian motion to almost 
surely enter a region where $S_t$ has nonzero and  positive drift 
(see the construction of $\ell$ and $\SD$ below); 
otherwise  $S_t$ might converge in $\R$ with a positive probability.
\end{enumerate}
\end{remark}

\subsection{Construction of the function $g$}\label{section24}

The idea to construct a function $g$ with the wanted properties is as follows.
As $g$ is given as solution of the partial differential equation
\begin{equation}\label{pdeforg}
  g'_s(r,s) = p(r,s) h(r) g'_r(r,s) 
\end{equation}
one has to find an appropriate function $p(r,s)$ and the required initial
conditions for $g$ to obtain the desired function.

We will see later that the construction of the metric on $M$ is 
similar to that given in Ancona \cite{Ancona:94}, as
Borb{\'e}ly also defines the function $p$ ``stripe-wise'' to control the
requirements for $p$, $g$ respectively, on each region of the form $[r_i,r_j]
\times \R$. Yet he is mainly concerned with the definition of the ``drift
ratio'' $p$, what makes it harder to track the behaviour of
the metric function $g$ and to possibly modify his construction for other
situations, whereas Ancona gives a more or less direct way to construct the
coupling function in the warped product. As a consequence this allows 
to extend his example to higher dimensions and to adapt it to other
situations as well.

We start with a brief description how to construct the function
\begin{displaymath}
p_0(r):= p(r,0),
\end{displaymath} as given in \cite{Borbely:98}:
$p_0$ is defined inductively on intervals $[r_n,r_{n+1}]$, where
$r_{n+1}-r_n>3$ and $r_1>3$ sufficiently large, see below. We let $p_0 := 0 $
on $[0,2]$ and define it  on $[2,3]$ as a slowly increasing function satisfying 
conditions (\ref{prop4}) and (\ref{prop5}). For $r\in [3,r_1]$ let
$p_0(r):=p_0(3)$ be constant, where $r_1$ is chosen big enough such that
$(p_0h)(r_1) >1$ and $r_1/h(r_1) <1/{1000}$.  On the interval $[r_1,\infty[$
we choose the function $p_0$ to be decreasing with $\lim_{r\to \infty} p_0(r)
= 0$, whereas $p_0h$ is still increasing and strictly convex; see
\cite{Borbely:98}, Lemma 2.3 and Lemma 2.4.

On the interval $[r_1,r_2]$ for $r_2$ big enough as given below (and in
general on intervals of the form $[r_{2n-1},r_{2n}]$) one extends $p_0h$ via a
solution of the differential equation
\begin{displaymath}
  y''= {1}/{(2y)}.
\end{displaymath}
Note that carefully smoothing the function $p_0$ on the interval $[r_1,r_1+1]$
(and on $[r_{2n-1},r_{2n-1}+1]$ respectively) to become ${C}^\infty$ does not interfere
with the properties (\ref{prop4}) and (\ref{prop5}) of $p_0$ and can be done such
that $(p_0h)''>1/{(4p_0h)}$ is still valid.

Lemma 2.4 in \cite{Borbely:98} shows that in fact $p_0\equiv{p_0h}/{h}$ decreases on
$[r_{2n-1},r_{2n}]$ and again by \cite{Borbely:98}, Lemma 2.3,
for given $r_{2n-1}$ one can choose the upper interval bound $r_{2n}$ such
that
\begin{displaymath}
\int_{r_{2n-1}}^{r_{2n}}\frac{1}{p_0h}\, dr>1 \quad\text{ for all } n,
\end{displaymath}
which guarantees property (\ref{prop6}) for $p_0$.

On the interval $[r_2,r_3]$ with $r_3$ sufficiently large as given below and in general
on intervals of the form $[r_{2n}, r_{2n+1}]$, let $p_0={c_n}/{r}$ 
for some well chosen constant $c_n$.  
This differs from the construction of
\cite{Borbely:98} p.~228 where $p_0$ is chosen to be constant in these
intervals, but it does not change the properties of the manifold. As above,
smoothing on intervals $[r_{2n}-1, r_{2n}]$ preserves the conditions
(\ref{prop4}), (\ref{prop5}) and $(p_0h)''>\varepsilon/{(p_0h)}$ for
$0<\varepsilon<1/4$ small enough and independent of $n$, but depending on the
choice of $p_0$ on ${[2,3[}$.

If one chooses for given $r_2$, $r_{2n}$ respectively, the interval bounds
$r_3$, $r_{2n+1}$ respectively, large enough we can achieve that
\begin{displaymath}
  \int_{r_{2n}}^{r_{2n+1}}p_0(r) \,dr >1 \quad\text{ for all } n,
\end{displaymath}
which finally assures property (\ref{prop3}) for $p_0$.

Following Borb{\'e}ly \cite{Borbely:98}, one defines $p(r,s)$ via $p_0(r)$ as
\begin{displaymath}
  p(r,s):=\chi(r,s)p_0(r)
\end{displaymath}
by using a ``cut-off function'' $\chi(r,s)$. Herein $\chi(r,s)$ is given as
$\chi(r,s):=\xi(s+\ell(r))$,
where $\xi$ is smooth and increasing with $\xi(y)= 0$ for $y < 0$, $\xi(y) =
1/2$ for $y>4$ and $\xi',|\xi''|<1/2$, $\xi''+\xi > 0$. The function $\ell$
satisfies $\ell(0)=0$, $\ell'(r)=0$ on the interval $[0,2]$ and
$\ell'(r)={\e}/(p_0h)(r)$ on the interval $[3,\infty[$, with the same $\e$ as
in~\eqref{E56}. Then the two pieces are connected smoothly such that for $r>0$
\begin{equation}
\label{E53}
\ell''(r)\ge -\e\f{(p_0h)'_r(r)}{p_0h(r)}\quad\hbox{and}\quad 0\le \ell'\le\f{\e}{p_0h}.
\end{equation}
The function $\ell$ is nondecreasing such that  $\lim_{r\to\infty}\ell(r) =\infty$
(see \eqref{prop6} in Lemma \ref{propofg}), and one can check that $p(r,s)$
satisfies the required properties listed in Lemma \ref{propofg} (see
\cite{Borbely:98}, p.\,228ff).

For an appropriate initial condition to solve the partial differential
equation (\ref{pdeforg}), we set $\widetilde{g_0}(r):=\sinh^2(r)$ on
$[0,\frac1{10}]$. On $[\frac1{10},\infty[$ let
$\widetilde{g_0}(r)$ be the solution of the differential equation
$$f'= \frac1{\sinh^2\left(1/{10}\right)}\,h'_r f \,\,\text{ with initial
  condition } f\left(\frac1{10}\right) = \sinh^2\left(\frac1{10}\right).$$
Smoothing $\widetilde{g_0}$ yields a ${C}^\infty$-function $g_0$ such that
$\widetilde{g_0}= g_0$ on
${[0,\frac1{10}-\delta]}\cup{[\frac1{10}+\delta,\infty[}$
for an appropriate $\delta$. 
In particular, there exist $d_1,\,d_2>0$ such that
\begin{equation}
\label{d1d2}
g_0(r)=d_2e^{d_1\sinh^2 r}\quad \hbox{for all}\quad r\ge \f1{10}+\d.
\end{equation}
The function $g_0$ serves as initial condition to solve the partial
differential equation
$$
g'_s(r,s) = p(r,s) h(r) g'_r(r,s). $$
\begin{figure}
\includegraphics[width=10cm]{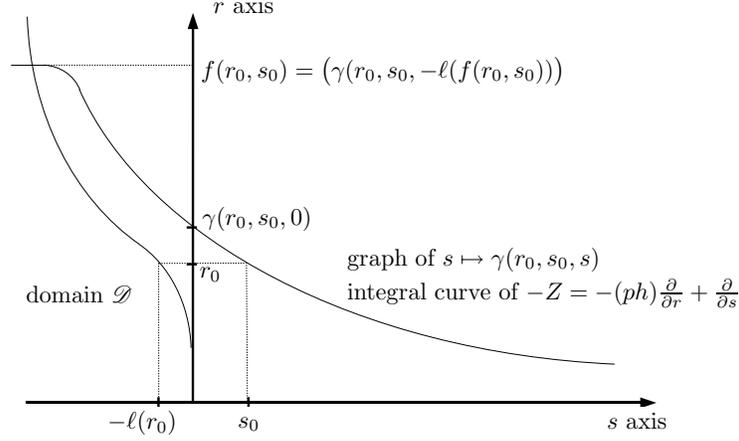}
\caption{Graph of $s\mapsto\gamma(r_0,s_0,s)$}
\label{figure3}
\end{figure}

More precisely, it is proven in \cite{Borbely:98} that all trajectories of the
vector field $-Z:=-(ph)\f{\p}{\p r}+\f{\p}{\p s}$ intersect the domain
$$
\SD:=\left\{(s,r)\in \R\times \R_+,\  s<-\ell(r)\right\}.
$$
Letting $s\mapsto (\g(r_0,s_0,s),s)$ be the integral curve of $-Z$
satisfying $\g(r_0,s_0,s_0)=r_0$, we define $f(r_0,s_0)$ by
\begin{equation}
\label{E54}
f(r_0,s_0)=\g(r_0,s_0,-\ell(f(r_0,s_0))).
\end{equation}
Then (see Sect.~\ref{Section5} for details)
\begin{equation}
\label{E55}
g(r_0,s_0)=g_0(f(r_0,s_0)).
\end{equation}

\section{Asymptotic behaviour of Brownian motion on $M$}\label{section3}

Let $(\Omega;\SF;\P)$ be a filtered probability space satisfying the usual
conditions and $X$ a Brownian motion on $M$ considered as a diffusion process
with generator $\frac12\Delta_M$ taking values in the Alexandroff
compactification $\widetilde M:= M\cup \{\infty\}$ of $M$.  Further let $\zeta$
denote the lifetime of $X$, i.e.~$X_t(\omega) = \infty$ for $t\geq
\zeta(\omega)$, if $\zeta(\omega)<\infty$.

As we have fixed the coordinate system $$M = \big\{(r,s,\alpha): r\in \R_+,\ s\in
\R,\ \alpha \in {[0,2\pi[}\big\}$$ for our manifold $M$, we consider the Brownian
motion $X$ in the chosen coordinates as well and denote by $(R_t)_{t<\zeta}$,
$(S_t)_{t<\zeta}$ and $(A_t)_{t<\zeta}$ the component processes of
$(X_t)_{t<\zeta}$.
The generator $\frac12\Delta_M$ of $X$ is then written in terms of the basis
$\frac{\p}{\p r},\frac{\p}{\p s},\frac{\p} {\p \alpha} $ of $TM$ as:
\begin{equation}
  \label{DeltaM} 
  \begin{split}
    \frac12\Delta_M &= \frac12\left(\frac{\p^2}{\p r^2}+\frac1h\frac{\p^2}{\p
        s^2} + \frac 1g\frac{\p^2}{\p \alpha^2}\right) +
    \left(\frac{h'_r}{4h}+
      \frac{g'_r}{4g}\right)\frac{\p}{\p r}+ \frac{g'_s}{4gh}\frac{\p}{\p s}.
  \end{split}
\end{equation}

Therefore we can write down a system of stochastic differential equations for
the components $R, S$ and $A$ of our given Brownian motion:
\begin{alignat}{3}
  \label{SDEforBM}
  dR_t &= \left(\frac{h'_r(R_t)}{4h(R_t)}+
    \frac{g'_r(R_t,S_t)}{4g(R_t,S_t)}\right)& dt\,\, &+ &dW^1\\
  dS_t &= \frac{g'_s(R_t,S_t)}{4g(R_t,S_t)h(R_t)}&dt\,\, &+
  \quad\;\frac{1}{\sqrt{h(R_t)}}&dW^2\\
  dA_t &= \quad& &\quad\;\;\frac1{\sqrt{g(R_t,S_t)}}\,\, &dW^3
\end{alignat}
with a three-dimensional Euclidean Brownian motion $W=(W^1,W^2,W^3)$.

As already mentioned, we are going to read the component $A$ of the Brownian
motion with values in the universal covering $\R$ of $S^1$.

\begin{remark}\label{lifetimeofB}
  Inspecting the defining stochastic differential equations for the
  components $R$, $S$ and $A$ of the Brownian motion $X$ on $M$, it is 
  interesting to note that the behaviour of the component $A_t$ does not
  influence the behaviour of the components $R_t$ and $S_t$. 
  The first two equations can be solved independently of the third one; 
  their solution then defines the third component of the Brownian motion.
  Hence it is clear that the
  lifetime $\zeta$ of $X_t$ does not depend on the component $A_t$ -- in
  particular does not depend on the starting point $A_0$ of $A_t$. We are
  going to use this fact later to prove existence of non-trivial
  bounded harmonic functions on $M$.
 \end{remark}

As we we are going to see in Section \ref{geominterpretation} the ``drift ratio''
 $p(r,s) ={g'_s}/({g'_rh})$ influences the interplay of the components $S_t$
 and $R_t$ of the Brownian motion and therefore determines the behaviour
 of the Brownian paths. For this reason it is more convenient to work with a
 time-changed version $\widetilde{X}_t$ of our Brownian motion, where the
 drift of the component $\widetilde{R}_t$ is just $t$ and the drift of
 $\widetilde{S}_t$ is essentially given by $p$. This can be realized with a
 time change $(\tau_t)$ defined as follows:

 Let $$T(t) := \int_0^t \left(\frac{h'_r}{4h} +
   \frac{g'_r}{4g}\right)(S_u,R_u)\,du $$
 and $\tau_t:= T^{-1}(t) \equiv
 \inf\{s\in\R_+ : T(s) \geq t\}$ for $t\leq T(\zeta)$. The components
 $\widetilde{R}_t,\widetilde{S}_t, \widetilde{A}_t$ of the time-changed
 Brownian motion $\widetilde{X}_t:= X_{\tau_t}$ are then given for $t\leq
 \widetilde{\zeta} :=T(\zeta)$ by the following system of stochastic
 differential equations:
\begin{alignat}{3}
  \label{SDEfortimechangedBM1}
  d\widetilde{R}_t &= & dt\,\, &+& \frac{1}{\sqrt{\frac{h'_r}{4h}(\widetilde
      R_t) +\frac{g'_r}{4g}(\widetilde R_t,\widetilde S_t)}}
  \,\,&dW^1\\ \label{SDEfortimechangedBM2} d\widetilde{S}_t &=
  \frac{g'_s(\widetilde R_t,\widetilde S_t)}{(g h'_r+ g'_rh)(\widetilde
    R_t,\widetilde S_t)}\,\,\,&dt\,\, &+& \frac{1}{\sqrt{h(\widetilde
      R_t)\left(\frac{h'_r}
        {4h}(\widetilde R_t)+\frac{g'_r}{4g}(\widetilde R_t,\widetilde S_t)\right)}}\,\,&dW^2\\
        \label{SDEfortimechangedBM3}
        d\widetilde{A}_t &=&&& \frac1{\sqrt{g(\widetilde R_t,\widetilde
            S_t)\left(\frac{h'_r} {4h}(\widetilde
              R_t)+\frac{g'_r}{4g}(\widetilde R_t,\widetilde S_t)\right)}}\,\,
        &dW^3.
\end{alignat}

We are now going to state the main theorem of this chapter which
shows that from the stochastic point of view the Riemannian manifold
$(M,\gamma)$ constructed by Borb{\'e}ly \cite{Borbely:98} has essentially
the same properties as the manifold of Ancona in \cite{Ancona:94}. We further
give a stochastic construction of non-trivial bounded harmonic functions on
$M$, which is more transparent than the existence proof of Borb{\'e}ly relying
on Perron's principle.

\begin{thm}[Behaviour of Brownian motion on $M$]\label{BMconverges}\hfill
  \begin{enumerate}
  \item\label{I1} For the Brownian motion $X$ on the Riemannian manifold
    $(M,\gamma)$ constructed above the following statement almost surely
    holds:
  \begin{displaymath}
    \lim_{t\to \zeta}X_t = L(+\infty),
  \end{displaymath}
  independently of the starting point $X_0$. In particular the Dirichlet
  problem at infinity for $M$ is not solvable.
\item\label{I2} The component $A_t$ of Brownian motion $X_t$ almost surely
  converges to a random variable $A_\zeta$ which possesses a positive density
  on $S^1$.
\item\label{I3} The lifetime $\zeta$ of the process $X$ is a.s. finite.
  \end{enumerate}
 \end{thm}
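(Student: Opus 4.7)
\emph{Part (i).} I will use the SDE system \eqref{SDEforBM} for $X$ and its time change \eqref{SDEfortimechangedBM1}--\eqref{SDEfortimechangedBM3} for $\widetilde X$. First, $R_t\to\infty$ a.s.: since $h'_r/(4h)=\tanh(R_t)/2$ is bounded below by a positive constant once $R_t\ge 1/10$ and $g'_r/(4g)\ge 0$ by property (1), the drift in the SDE for $R$ dominates a positive constant and a one-dimensional comparison yields $R_t\to\infty$; passing to $\widetilde R$ (drift $\equiv 1$, bounded diffusion because $h'_r/(4h)+g'_r/(4g)$ has a positive lower bound on $\{r\ge 1/10\}$) this sharpens to $\widetilde R_t/t\to 1$ a.s. Next, using $g'_s=phg'_r$ together with $g'_r\ge h'_r g$ from property (2), the drift of $\widetilde S$ in \eqref{SDEfortimechangedBM2} satisfies
\begin{equation*}
\frac{g'_s}{gh'_r+g'_rh}=\frac{p}{1+gh'_r/(g'_rh)}\ge\frac{p}{1+1/h}\ge\frac{p}{2},
\end{equation*}
while its diffusion coefficient is $O(h(\widetilde R_t)^{-1/2})$ and thus decays exponentially. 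Property (3) gives $\int_0^\infty p(r,s)\,dr=\infty$, and property (6) prevents the process from being trapped in the zero region $\{s<-\ell(r)\}$ of $p$; together they force $\widetilde S_t\to+\infty$ a.s. By the description of $S_\infty(M)$ in Section \ref{section22}, $R_t\to\infty$ with $S_t\to+\infty$ forces $X_t\to L(+\infty)$ in the cone topology independently of the starting point, and non-solvability of the Dirichlet problem at infinity follows because any continuous extension of boundary data $f$ must be the constant $f(L(+\infty))$.

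\emph{Part (ii).} The component $A$ is a local martingale with $[A]_t=\int_0^t g(R_u,S_u)^{-1}\,du$. From \eqref{d1d2} and \eqref{E55} one has $g(R_u,S_u)=g_0(f(R_u,S_u))$ with $g_0(r)=d_2 e^{d_1\sinh^2 r}$ for $r\ge 1/10+\delta$, so by Part (i) the integrand decays super-exponentially in $u$; hence $[A]_\zeta<\infty$ a.s.\ and, by martingale convergence, $A_t\to A_\zeta$ a.s.\ for some $\R$-valued random variable $A_\zeta$. Conditional on $\SF^{R,S}_\zeta$, the Dambis--Dubins--Schwarz theorem represents $A_\zeta-A_0$ as a centered Gaussian with positive variance $[A]_\zeta$, whose law has a strictly positive density on $\R$; pushing forward by $\alpha\mapsto e^{i\alpha}$ yields the density on $S^1$ as a sum of positive Gaussian translates, hence strictly positive.

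\emph{Part (iii).} I first argue that $\widetilde\zeta=\infty$: in \eqref{SDEfortimechangedBM1}--\eqref{SDEfortimechangedBM3} the drift of $\widetilde R$ equals $1$ and the drift of $\widetilde S$ is bounded by $1/500$ via property (4), while all diffusion coefficients remain bounded outside any neighbourhood of the axis $L$ (where the metric is isometric to $\H^3$ and Brownian motion is non-explosive), so $\widetilde X$ does not explode. The time-change identity then yields
\begin{equation*}
\zeta=\int_0^\infty\frac{dt}{\frac{h'_r}{4h}(\widetilde R_t)+\frac{g'_r}{4g}(\widetilde R_t,\widetilde S_t)}\le\int_0^\infty\frac{4\,dt}{h'_r(\widetilde R_t)},
\end{equation*}
using $g'_r/g\ge h'_r$ from property (2). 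Since $\widetilde R_t/t\to 1$ a.s.\ and $h'_r(r)\sim\tfrac12 e^{2r}$, the integrand is $O(e^{-2t})$, so $\zeta<\infty$ a.s.

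The main obstacle is the proof that $\widetilde S_t\to+\infty$ in Part (i): the drift of $\widetilde S$ comes from $p(r,s)=\chi(r,s)p_0(r)$, which vanishes on the large set $\{s<-\ell(r)\}$. The delicate interplay between property (3) (infinite total drift in $r$) and property (6) (preventing a trap in the zero set of $p$), combined with the exponential decay of the diffusion of $\widetilde S$, is what rules out the competing scenario in which $\widetilde S_t$ would converge to a finite random limit with positive probability, and it is exactly this step that requires the full strength of the construction in Section \ref{section24}.
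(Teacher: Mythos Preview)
Your proof is correct and follows essentially the same route as the paper's: both work with the time-changed process $\widetilde X$, use the bounded diffusion coefficient to get $\widetilde R_t/t\to 1$, exploit $g'_r\ge h'_r g$ so that the martingale parts of $\widetilde R$, $\widetilde S$, $\widetilde A$ converge, derive $\widetilde S_t\to+\infty$ from a drift lower bound combined with property~(\ref{prop3}) and $\ell(r)\to\infty$ (which is where property~(\ref{prop6}) enters), identify $A_\zeta$ as conditionally Gaussian given $(R,S)$, and recover $\zeta<\infty$ from the time-change identity together with $g'_r/g\ge h'_r$. Your explicit lower bound $\text{drift}(\widetilde S)\ge p/2$ in fact spells out a step the paper's printed proof leaves implicit (the paper only displays the upper bound $\le p$ there before asserting the lower bound $p_0((1+\beta)t)/2$).
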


 \begin{proof}
From Eqs.~\eqref{SDEfortimechangedBM1}, ~\eqref{SDEfortimechangedBM2} and
   ~\eqref{SDEfortimechangedBM3} we immediately see that the derivatives of the
   brackets of $\widetilde R$ and $\widetilde S$ are bounded, and that the
   drift of $\tilde S$ takes its values in $[0,\max p]$ and is therefore bounded.
   As a consequence the process $(\widetilde R,\widetilde S, \widetilde A)$
   has infinite lifetime.
   
   Moreover for all $\beta>0$, we have almost surely $|R_t-t|\le \beta t$ eventually.
   From this and the fact that ${g_r'}/{g}\ge h_r'$ for $r\ge {1}/{10}$
   we deduce that the martingale parts of $\widetilde R$ and $\widetilde S$
   together with the process $\widetilde A$ converge as $t$ tends to
   infinity.
   
   Next we see from $$\f{g_s'}{gh_r'+g_r'h}=\f{phg_r'}{gh_r'+g_r'h}\le p$$
   that for $t$ sufficiently large the drift of $\widetilde S$ is larger that
   $p_0((1+\beta)t)/2$. Consequently $\widetilde S$ tends to infinity as
   $t\to\infty$.
To prove \eqref{I1} it is sufficient to establish
   $$
   \lim_{t\to \infty}\widetilde X_t = L(+\infty)
   $$
   and this is a consequence of the fact that $\widetilde S_t$ converges to
   infinity.
The proof of~\eqref{I2} is a direct consequence of the convergence of
   $\widetilde A_t$ to a random variable which conditioned to $\widetilde R$
   and $\widetilde S$ is Gaussian (in $\R$) and non-degenerate.
We are left to prove~\eqref{I3}. Changing back the time of the process
   $\widetilde X$, we get
\begin{equation}
\label{CBT}
\zeta=\int_0^\infty\left(\f{h_r'}{4h}+\f{g_r'}{4g}\right)^{-1}\left(\widetilde R_t,\widetilde S_t\right)\,dt.
\end{equation}
But, for large $r$ and $s\ge 0$, we have
$$
\left(\f{h_r'}{4h}+\f{g_r'}{4g}\right)(r,s)\ge\f{g_r'}{4g}\ge \f{1}{2}\cosh
r\sinh r.
$$
As a.s. $\widetilde R_t-t\ge -\beta t$ eventually and as $\widetilde S_t$ converges
to infinity as $t$ tends to infinity, we get the result.
 \end{proof}

\section{Non-trivial shift-invariant events}\label{nontrivialshiftinvariant} 
\setcounter{equation}0

As explained in the Introduction there is a one-to-one correspondence between
the $\sigma$-field ${\SF}_{\inv}$ of shift-invariant events for $X$ up to
equivalence and the set of bounded harmonic functions on $M$. 
We are going to use this fact and give a probabilistic proof for the 
existence of non-trivial shift-invariant random variables, which in turn yields 
non-trivial bounded harmonic functions on $M$. In addition, we get
a stochastic representation of the constructed harmonic functions as 
``solutions of a modified Dirichlet problem at infinity''. 
In contrast to the usual Dirichlet problem at infinity however, the boundary function does not
live on the geometric horizon $S_{\infty}(M) \cong S^{d-1}(M)$, 
and the harmonic functions are not representable in terms of the limiting angle 
of Brownian motion.

\subsection{Shift-invariant variables} \label{shiftinv_var}
In the discussed example it turns out that the shift-invariant random variable
$A_\zeta$ is non-trivial and can be interpreted as ``$1$-dimensional angle''
on the sphere $S_\infty(M)$ at infinity. The variable $A_\zeta$ gives the
direction on the horizon, wherefrom the Brownian motion $X$
converges to the limiting point $L(\infty)$. 
Despite the fact that the limit $X_\zeta\in S_\infty(M)$ itself is trivial, and hence 
the limiting angle $\vartheta(X)_\zeta\in S^{d-1}(M)$ as well, Brownian paths for large times
can be distinguished by their projection onto $S_\infty(M)$. 
Taking into account that in the pinched curvature case the angular part carries 
all  shift-variant information, one might conjecture 
that the random variable $A_\zeta$ already generates the
shift-invariant $\sigma$-field ${\SF}_{\inv}$.
In turn this would imply a stochastic
representation for bounded harmonic functions $h$ on $M$ as
$$
h(x) = \E^x[f(A_\zeta)]
=\E^x\left[\lim_{t\to\zeta}f(\pr_3(X_t))\right]
$$
with $f\colon S^1\to \R$ measurable. However, Borb{\'e}ly \cite{Borbely:98}
describes a way to construct a family of harmonic functions $\psi(r,s)$ 
which are rotationally invariant, i.e.~independent
of $\alpha$, and therefore cannot be of the above form. 
As he uses ``Perron's principle'' for the construction, 
these harmonic functions do not come with an explicit representation. 
Put in the probabilistic framework, we learn from
this that there must be a way to obtain non-trivial shift-invariant
events also in terms of the components $S_t$ and $R_t$ of the Brownian motion. 
Indeed, as will be seen in Theorem~\ref{zisnotconstant}, there exists a non-trivial
shift-invariant random variable of the form
$$\lim_{t\to\zeta} \left(\widetilde S_t-\int_0^{\widetilde
    R_t}q(r)\,dr\right).$$
Herein $\widetilde S$ and $\widetilde R$ are
time-changed versions of $S$ and $R$ and $q\colon \R_+\to \R$ is a function
already constructed by Borb\'ely, whose properties are listed in
Lemma~\ref{propsofq} below. 
This finally leads to additional (to that depending on
the component $\alpha$) harmonic functions via the stochastic representation
$$h(x) = \E^x\left[g\left(\lim_{t\to\zeta}\left(\widetilde
      S_t-\int_0^{\widetilde R_t}q(r) \,dr\right)\right)\right],$$
where
$g\colon \R\to \R$ is a bounded measurable function.

\begin{lemma}\label{propsofq}
  There is a ${C}^\infty$-function $q\colon \R_+\to \R$ with the following
  properties:
\begin{enumerate}
\item \label{propq1} $$q(r) = -\frac{\sinh(r)}{\cosh^2(r)} =
  \left(\frac1{\sqrt{h}}\right)' \text{ for } r\leq T_1.$$
\item \label{propq2} For $r> T_1$ the function $q$ satisfies the inequalities
  $$
  -3|q| < q' < \frac 1{\cosh(r)}, \qquad\qquad
  \left(\frac1{\sqrt{h}}\right)'\leq q \leq \frac{p_0}{2}-\frac{40}{h}.
  $$
\item \label{propq3} There is a $T_2>T_1$ such that $$q(r) =
  \frac{p_0(r)}{2}-\frac{40}{h(r)} \text{ for } r\geq T_2.$$
\end{enumerate}
\end{lemma}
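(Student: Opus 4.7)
The plan is to construct $q$ piecewise: on $[0,T_1]$ set $q := (1/\sqrt h)'$ to enforce (1); on $[T_2,\infty)$ set $q := p_0/2 - 40/h$ to enforce (3); and smoothly interpolate between the two on $[T_1,T_2]$. Writing $a(r) := (1/\sqrt{h})'(r) = -\sinh r/\cosh^2 r$ and $b(r) := p_0(r)/2 - 40/h(r)$, the envelope inequality $a \leq q \leq b$ in (2) requires choosing $T_1$ so that $a \leq b$ on $[T_1,\infty)$. Although $b < a$ for small $r$ (on $[0,2]$ one has $b = -40/h$ while $a \approx 0$), one has $a(r) \leq 0$ for all $r$ and $b(r) \geq 0$ once $p_0(r)h(r) \geq 80$, which holds eventually by Lemma~\ref{propofg}(\ref{prop5}) (giving $(p_0 h)' \geq 0$, $p_0 h \to \infty$). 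Any $T_1 \geq \sinh^{-1}(40)$ thus suffices.

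The main obstacle is the upper derivative bound $q' < 1/\cosh r$ on $(T_1,\infty)$. The key identity $1/\cosh r - a'(r) = 2/\cosh^3 r$, which follows from $\cosh^2 r - \sinh^2 r = 1$, shows that if we write $q = a + F$ with $F \geq 0$, then the condition becomes $F' < 2/\cosh^3 r$ on $[T_1,\infty)$. The cumulative budget $\int_{T_1}^\infty 2/\cosh^3 r\,dr$ is of order $e^{-3T_1}$, while the required rise $F(T_2) = b(T_2) - a(T_2)$ is bounded by $p_0(T_2)/2 + |a(T_2)|$. Borb\'ely's construction of $p_0$ in Section~\ref{Section2} (specifically the small initial constant $p_0(3)$ and the decay of $p_0$ for large $r$) is precisely designed to make this rise compatible with the tiny budget, and the interpolation $F$ can then be arranged smoothly and nondecreasing with all derivatives vanishing at the endpoints. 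On $[T_2,\infty)$ where $q = b$, we have $q' = p_0'/2 + 80\sinh r/\cosh^3 r$; using $\sinh r \leq \cosh r$ gives $80\sinh r/\cosh^3 r \leq 80/\cosh^2 r < 1/\cosh r$ for $r > \cosh^{-1}(80)$, and choosing $T_2$ past any smoothing interval of $p_0$ ensures $p_0' \leq 0$, whence $q' < 1/\cosh r$ there as well.

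Once the upper bound is in place, the lower bound $q' > -3|q|$ is comparatively easy. On $[T_1,T_2]$, with $F$ nondecreasing, $q' = a' + F' \geq a' \geq 0$ (since $a' = (\sinh^2 r - 1)/\cosh^3 r > 0$ for $r > \sinh^{-1}(1) \approx 0.88$), so $q' \geq 0 > -3|q|$. On $[T_2,\infty)$, the explicit form of $p_0$ from Section~\ref{Section2} (on $[r_{2n-1},r_{2n}]$ the function $p_0 h$ solves $y'' = 1/(2y)$, on $[r_{2n},r_{2n+1}]$ one has $p_0 = c_n/r$) yields $|p_0'| \leq (2+\varepsilon)p_0$ for $r$ large, hence $|p_0'|/2 \leq (1+\varepsilon/2)p_0 < (3/2)|q|$ (using $|q|\sim p_0/2$ for large $r$), so that $q' = p_0'/2 + 80\sinh r/\cosh^3 r > -3|q|$ holds with room to spare. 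The main delicate step throughout is the balancing of $T_1$, $T_2$ and the initial value $p_0(3)$ so that both the budget constraint on $F'$ and the envelope inequality $a \leq b$ are compatible.
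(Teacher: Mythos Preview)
Your approach is the same as the paper's (and Borb\'ely's): set $q=a:=(1/\sqrt h)'$ on $[0,T_1]$, set $q=b:=p_0/2-40/h$ on $[T_2,\infty)$, and bridge the two by a strictly increasing $C^\infty$ piece satisfying $a'<q'<1/\sqrt h$; the paper simply lets $T_2$ be the first meeting time of the increasing extension with $b$ rather than fixing $T_2$ in advance, and defers the final verifications to \cite{Borbely:98}. Your decomposition $q=a+F$ and the identity $1/\cosh r - a'=2/\cosh^3 r$ are a nice way to quantify the budget issue.

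There is, however, an arithmetic slip in your verification of $q'>-3|q|$ on $[T_2,\infty)$. The inequality $(1+\varepsilon/2)p_0<\tfrac32|q|$ is false: since $|q|=p_0/2-40/h<p_0/2$, one has $\tfrac32|q|<\tfrac34 p_0<p_0$. The correct argument uses the sharper bound $|p_0'|\le 2p_0$ (an immediate consequence of $(p_0h)'\ge 0$ together with $h'/h\le 2$), giving $p_0'/2\ge -p_0$; the requirement $q'>-3|q|$ then reads
\[
-p_0+\frac{80\sinh r}{\cosh^3 r}>-\frac32 p_0+\frac{120}{\cosh^2 r},
\quad\text{i.e.}\quad \frac{p_0}{2}>\frac{120\cosh r-80\sinh r}{\cosh^3 r},
\]
and since $120\cosh r-80\sinh r<120\cosh r$ this follows from $p_0h>240$, which is exactly why the paper fixes $T_0$ with $p_0(r)h(r)>240$ for $r>T_0$.
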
 

\begin{proof}
  See Section \ref{constructionofq} below and \cite{Borbely:98}.
\end{proof}

\begin{thm}\label{zisnotconstant}
  Consider $\widetilde Z_t= \widetilde S_t - \int_0^{\widetilde R_t} q(u)\,du$
  as before.  The limit variable $$\widetilde
  Z_\infty:=\lim_{t\to\infty} \widetilde Z_t$$ exists. Moreover the law of the
  random variable $(\widetilde Z_\infty, \widetilde
  A_\infty)=(Z_\zeta,A_\zeta)$ has full support $\R\times S^1$ and is
  absolutely continuous with respect to the Lebesgue measure.
\end{thm}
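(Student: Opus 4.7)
The plan is to establish, in order: (a)~a.s.\ convergence of $\widetilde Z_t$, (b)~absolute continuity of the joint law of $(\widetilde Z_\infty,\widetilde A_\infty)$ with respect to Lebesgue measure, and (c)~full support on $\R\times S^1$.

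For (a), I would apply Itô's formula to $\widetilde Z_t = \widetilde S_t - \int_0^{\widetilde R_t} q(r)\,dr$ using \eqref{SDEfortimechangedBM1}--\eqref{SDEfortimechangedBM2}, yielding
$$
d\widetilde Z_t=\Bigl[\tfrac{g_s'}{gh_r'+g_r'h}(\widetilde R_t,\widetilde S_t)-q(\widetilde R_t)\Bigr]dt-\tfrac12 q'(\widetilde R_t)\,d\langle \widetilde R\rangle_t+dN_t,
$$
where $N$ is a local martingale with $d\langle N\rangle_t = (q^2(\widetilde R_t)+1/h(\widetilde R_t))\bigl(h_r'/(4h)+g_r'/(4g)\bigr)^{-1}(\widetilde R_t,\widetilde S_t)\,dt$. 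Substituting $g_s'=phg_r'$ and using $g_r'\ge h_r' g$ from Lemma~\ref{propofg}(\ref{prop2}) (so $gh_r'/(g_r'h)\le 1/h$), the drift of $\widetilde S$ equals $p - \Theta(p/h)$. Since $\widetilde S_t\to\infty$ by Theorem~\ref{BMconverges}, the cut-off $\chi(\widetilde R_t,\widetilde S_t)$ reaches $1/2$ in finite time, so eventually $p = p_0/2$ and, by Lemma~\ref{propsofq}(\ref{propq3}), $q = p_0/2 - 40/h$; thus $p - q = 40/h + O(1/h)$. Since $\widetilde R_t\sim t$ and $h=\cosh^2$ grows exponentially, the drift of $\widetilde Z$ (including the bounded $q'$ term handled via $|q'|<1/\cosh r$) is pathwise integrable, and a parallel bound forces $\langle N\rangle_\infty<\infty$ a.s. Martingale convergence then yields $\widetilde Z_\infty=\lim_{t\to\infty}\widetilde Z_t$ in $\R$.

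For (b), observe that $\widetilde A$ is driven by $W^3$ alone, with vanishing drift and independently of $W^1,W^2$. Hence, conditionally on $\mathcal G:=\sigma(\widetilde R,\widetilde S)=\sigma(W^1,W^2)$, the unwrapped $\widetilde A_\infty$ is Gaussian with mean $\widetilde A_0$ and variance $\Sigma^2=\int_0^\infty [g(h_r'/(4h)+g_r'/(4g))]^{-1}(\widetilde R_t,\widetilde S_t)\,dt$, which is $\mathcal G$-measurable, a.s.\ finite (by the super-exponential growth of $g$ in \eqref{d1d2} along $\widetilde R_t\sim t$) and strictly positive. Since $\widetilde Z_\infty$ is $\mathcal G$-measurable, for any Borel $B\subset\R\times S^1$ of Lebesgue measure zero the conditional probability $\P[(\widetilde Z_\infty,\widetilde A_\infty)\in B\mid\mathcal G]$ vanishes a.s., \emph{provided} the marginal law of $\widetilde Z_\infty$ is itself absolutely continuous with respect to Lebesgue on $\R$. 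For this last point I would appeal to Malliavin calculus: uniform ellipticity of the generator of $(\widetilde R,\widetilde S)$ makes $\widetilde Z_t$ Malliavin-differentiable, and the Malliavin covariance of $\widetilde Z_\infty$ is bounded below by an expression involving the nonvanishing $dW^2$-coefficient $\sigma_2=(h(h_r'/(4h)+g_r'/(4g)))^{-1/2}$ from $d\widetilde Z$; the Bouleau--Hirsch criterion then yields a density.

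For (c), a Stroock--Varadhan support theorem on a finite interval $[0,T]$ steers $(\widetilde R,\widetilde S,\widetilde A)$ into any prescribed neighbourhood of an admissible point with positive probability; applying the Markov property at time $T$ and combining with the positivity of the limit density from (b), any point of $\R\times S^1$ lies in the support of the law of $(\widetilde Z_\infty,\widetilde A_\infty)$. The main obstacle is the density of $\widetilde Z_\infty$ in step (b): while $\widetilde Z_T$ has a smooth density for every finite $T$ by ellipticity, passing to $T=\infty$ requires controlling the Malliavin covariance over the tail, since the $dW^1$-contribution to $D\widetilde Z_t$ couples through the joint dynamics with the $dW^2$-contribution and one must rule out cancellation; the non-degeneracy of $\sigma_2$ combined with the integrability estimates from (a) should suffice, but this is the delicate input of the argument.
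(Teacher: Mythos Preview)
Your part (a) is essentially the paper's argument: compute $d\widetilde Z_t$ via It\^o's formula, and use that eventually $p=p_0/2$, $q=p_0/2-40/h$, together with $\widetilde R_t\sim t$ and the exponential growth of $h$, to make both drift and quadratic variation integrable.

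For (b) and (c) you take a different route, and the gap you yourself flag in (b) is real. The paper does \emph{not} use Malliavin calculus for the density of $\widetilde Z_\infty$. Instead it compactifies time via $u=\arctan t$ and considers $\breve U_u=(\widetilde R_{\tan u}-\tan u,\,\widetilde Z_{\tan u})$ on $[0,\pi/2]$. The key input is that, on an exhausting sequence of events $B_n$, the coefficients of $\breve U$ extend continuously to $u=\pi/2$ with bounded derivatives up to order~$2$; this is the content of the detailed estimates of Section~\ref{Section5} (Lemmas~\ref{L1}--\ref{L4} and Proposition~\ref{P1}), which control $m$, $m_r'$, $m_s'$, $mm_{rr}''$, $mm_{rs}''$, $mm_{ss}''$ by $h^{-\alpha}$. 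Kunita's stochastic flow theorem then yields a $C^1$ diffeomorphism carrying the (positive, smooth) density of $\breve U_{\pi/4}$ to a density for $\breve U_{\pi/2}$, and projecting onto the second coordinate gives a density for $\widetilde Z_\infty$. Your Bouleau--Hirsch approach would require $\|D\widetilde Z_\infty\|_H>0$ a.s., but since all diffusion coefficients decay to zero and the $W^1$- and $W^2$-perturbations feed back through the coupled dynamics of $(\widetilde R,\widetilde S)$, bounding the Malliavin covariance at $t=\infty$ from below is not a formality; you have not carried this out, and it is at least as delicate as the flow argument.

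For (c) the paper is also more direct and does not rely on (b). It shows the diffusion coefficient $h^{-1/2}m$ of $\widetilde Z$ is bounded above by $e^{-\beta t}$ and below by an explicit \emph{deterministic} positive continuous function (the lower bound uses the upper estimate for $t_r'$ in Lemma~\ref{L1} together with $\widetilde R_t/2\le t\le 2\widetilde R_t$ and $\widetilde S_t\le t$ eventually). Since the drift of $\widetilde Z$ already converges, one can then force $\widetilde Z$ to hit the centre of any open interval and remain inside with positive probability, which gives full support for $\widetilde Z_\infty$ without Stroock--Varadhan. The joint full support on $\R\times S^1$ then follows because $\widetilde A_\infty$ is conditionally non-degenerate Gaussian given $(\widetilde R,\widetilde Z)$.
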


\begin{proof}
  Let $t(r,s)=\log g(r,s)$ and
  $$
  m(r,s)=\f2{\sqrt{({h_r'}/{h})(r)+t_r'(r,s)}}.
  $$
  Then a straight-forward calculation shows that
\begin{align}
\begin{split}
  d\widetilde Z_t&=\left(-p(\widetilde R_t,\widetilde S_t)\f{h'_r}{4h}(\widetilde R_t)m^2(\widetilde R_t,\widetilde S_t)-\f12q'(\widetilde R_t)m^2(\widetilde R_t,\widetilde S_t)\right)\,dt\\
  &\qquad+(p(\widetilde R_t,\widetilde S_t)-q(\widetilde R_t))\,dt-q(\widetilde
  R_t)m(\widetilde R_t,\widetilde S_t)\,dW_t^1
+\frac{m(\widetilde R_t,\widetilde S_t)}{\sqrt{h(\widetilde
      R_t)}}\,dW_t^2.
\end{split}
\end{align}
When $\widetilde R_t$ is sufficiently large, this simplifies as
\begin{align}
\begin{split}
  d\widetilde Z_t&=\left(-p(\widetilde R_t,\widetilde S_t)\f{h'_r}{4h}(\widetilde R_t)m^2(\widetilde R_t,\widetilde S_t)-\f12q'(\widetilde R_t)m^2(\widetilde R_t,\widetilde S_t)+\f{40}{h(\widetilde R_t)}\right)\,dt\\
  &\qquad-q(\widetilde R_t)m(\widetilde R_t,\widetilde
  S_t)\,dW_t^1+\frac{m(\widetilde R_t,\widetilde S_t)}{\sqrt{h(\widetilde R_t)}}\,dW_t^2.
\end{split}
\end{align}
 For $t$ sufficiently large, we know that 
$|\widetilde R_t-t|\le \beta t$ (for any small $\beta$) and that
$\widetilde S_t$ is positive; furthermore $t_r'$
is larger than $2\cosh r\sinh r$ for $r$ large and $s\ge 0$. This together
with the fact that the functions $p$, $({h_r'}/{4h})$ $q'$, $q$ and $1/{\sqrt h}$ are
bounded allows to conclude that $\widetilde Z$ is converging.

To prove that the law of $\widetilde Z_\infty$ has full support on $\R$, it is
sufficient to establish convergence of the finite variation part of $\widetilde Z$, 
and that its diffusion coefficient is eventually bounded below by
a continuous positive deterministic function and above by $e^{-\beta t}$ for
some $\beta>0$.  Indeed, with these properties it is easy to prove that, taking
any open non empty interval $I$ of $\R$, there is a time $t$ such that after
time $t$, the process $\widetilde Z$ will hit the center of $I$ and then stay in $I$ with
positive probability.

The convergence of the drift has already been established, together with the
upper bound of the diffusion coefficient. To obtain the lower bound, 
we use Eq.~\eqref{E7} of Lemma~\ref{L1} below, along with the
fact that eventually ${t}/2\le \widetilde R_t\le 2t$ and $\widetilde
S_t\le t$, to obtain
$$
t_r'(\widetilde R_t,\widetilde S_t)\le C_2(h\circ f)(2t,t)\,\f{((p_0h)\circ
  f)(2t,t)}{(p_0h)\left(t/2\right)}
$$
for some $C_2>0$, where $f$ is defined at the end of Sect.~\ref{Section2}. This
immediately yields a lower bound for the diffusion coefficient 
$h(\widetilde R_t)^{-1/2}\,m(\widetilde R_t,\widetilde S_t)$ 
of the form
$$
ce^{-2t}\left((h\circ f)(2t,t)\,\f{((p_0h)\circ
    f)(2t,t)}{(p_0h)\left(t/2\right)}\right)^{-1/2}
$$
(for some $c>0$) which is a positive continuous function.

The assertion on the support of the law of $(\widetilde Z_\infty,
\widetilde A_\infty)$ is then a direct consequence of the fact that conditioned to
$(\widetilde R, \widetilde Z)$, the random variable $\widetilde A_\infty$ 
is a non-degenerate Gaussian variable.

We are left to prove that the law of $(\widetilde Z_\infty, \widetilde
A_\infty)$ is absolutely continuous.  Using the conditioning argument for
$\widetilde A_\infty$, it is sufficient to prove that the law of $\widetilde
Z_\infty$ is absolutely continuous.  
To this end we use the estimates in Sect.~\ref{Section5}. 
It is proven there that there exists an increasing
sequence of subsets $B_n$ of $\Om$ satisfying $\bigcup_{n\ge 0}B_n\asequal\Om$
such that on $B_n$, for each $n\ge 0$, the diffusion process
$$
\breve U_u:= \big(\widetilde R_{\tan u}-\tan u, \widetilde Z_{\tan
    u}\big)
$$
has a continuous extension to $u\in [0,\pi/2]$ and that this extension has
bounded coefficients with bounded derivatives up to order~$2$. Consequently,
following \cite{Kunita:90} Theorem~4.6.5, there exists a $C^1$ flow
$\breve\varphi(u_1,u_2,\om)$ which almost surely realizes a diffeomorphism
from ${]{-\tan u_1},\infty[}\times \R$ to ${]{-\tan u_2},\infty[}\times \R$ if
$u_2<\pi/2$ and from ${]{-\tan u_1},\infty[}\times \R$ to its image in $\R\times
\R$ if $u_2=\pi/2$ (for the first assertion one uses the fact that Brownian
motion has full support).

As a consequence, the density $\breve p_{\pi/4}$ of $\breve U_{\pi/4}$ (which
exists and is positive on ${]{-\tan \pi/4,\infty}[}\times \R$) is transported
by the flow to
$$
\breve p_{\pi/4}\circ
\left(\breve\varphi(\pi/4,\pi/2,\om)\right)^{-1}\big|\det
  J\left(\breve\varphi(\pi/4,\pi/2,\om)\right)^{-1}\big|
$$
where $J\left(\breve\varphi(\pi/4,\pi/2,\om)\right)^{-1}$ is
the Jacobian matrix of $\left(\breve\varphi(\pi/4,\pi/2,\om)\right)^{-1}$.
Integrating with respect to $\om$ proves existence of a positive density
$\breve p_{\pi/2}$ for $\breve U_{\pi/2}$:
$$
\breve p_{\pi/2}= \E\left[ \breve p_{\pi/4}\circ
  \left(\breve\varphi(\pi/4,\pi/2,\newdot)\right)^{-1}\big|\det
    J\left(\breve\varphi(\pi/4,\pi/2,\newdot)\right)^{-1}\big|\right].
$$
Finally projecting onto the second coordinate gives the density of
$\widetilde Z_\infty$, which proves that its law is absolutely continuous.
\end{proof}

\subsection{Construction of the function $q$} \label{constructionofq}

 As the explicit construction of $q$ is already done in
\cite{Borbely:98} we just give a short sketch (following Borb{\'e}ly)
how to get a function~$q$ with the required properties:

Let $a\in \R_+$ and  $T_0$ such that $p_0(r)h(r) > 240$ and $\sqrt{h(r)}
> 80$ for $r> T_0$. Let further $T_1> T_0$ such that $p(r,s) =
p_0(r)/2$ for $r\geq T_1$ and $s\geq a-1$. 
For $r\leq T_1$ the function $q$ is defined as 
$$q(r) :=
\left(\frac1{\sqrt h}\right)' = -\frac{\sinh(r)}{\cosh^2(r)}.$$
For $r\geq T_1$ choose a strictly increasing
${C}^\infty$-extension of $-{\sinh(r)}/{\cosh^2(r)}$ such
that $q(r) > 0$ for $r$ large enough and
$$ \left(\frac1{\sqrt h}\right)''< q' < \frac{1}{\sqrt h}.$$
As $p_0(r)/2 -{40}/{h(r)} > 0 $ with 
$$\lim_{r\to\infty}\left(\frac12 p_0(r) -\frac{40}{h(r)}\right) = 0$$ 
(due to  the construction of $p_0$, see Sect.~\ref{Section1}) there is an
  $r> T_1$ with $q(r) = p_0(r)/2-{h(r)/40}$. 
Let 
$$T_2:= \inf\left\{r>T_1: q(r) = \frac12p_0(r) - \frac{40}{h(r)}\right\}.$$
For $r\geq T_2$ set $$q(r):= \frac12 p_0(r) - \frac{40}{h(r)}.$$
The desired function $q$ is then a smoothened version of this function. 

Borb{\'e}ly shows in \cite{Borbely:98}, p.\,232, that the function $q$ 
obtained this way meets indeed the additionally required properties
$$-3|q|\leq q' \leq \frac1{\sqrt h} \quad\text{and}\quad\left(\frac1{\sqrt
    h}\right)\leq q\leq\frac12 p_0 -\frac{40}{h}\,.$$

\subsection{Interpretation of the asymptotic behaviour of Brownian motion}
\label{geominterpretation} 

We conclude this chapter with some explanations how the
behaviour of the Brownian paths on $M$ should be interpreted
geometrically. 

As seen in Sect.~\ref{Section1} and
Sect.~\ref{nontrivialshiftinvariant}, the random variables 
$$\lim_{t\to\zeta} A_t\quad\text{and}\quad \lim_{t\to \zeta}\left( S_t - \int_0^{ R_t} q(r)\,dr\right)$$ 
serve as non-trivial shift-invariant random variables for $X$ and hence yield
non-trivial shift-invariant events for $X$. 

To give a geometric interpretation of the shift-invariant
variable $$\lim_{t\to\infty} \left(\widetilde S_t - \int_0^{\widetilde
  R_t} q(r) \,dr\right),$$ 
we have to investigate again the stochastic differential
equations for $\widetilde S_t$ and $\widetilde R_t$:
\begin{alignat*}{3}
  d\widetilde{R}_t &= & dt\,\, &+&  
  \frac{1}{\sqrt{\frac{h'_r}{4h}
  +\frac{g'_r}{4g}}}
  \,\,&dW^1_t\\ 
  d\widetilde{S}_t &=
  \frac{g'_s}{g  h'_r+  g'_rh}\,\,\,&dt\,\,
   &+&
  \frac{1}{\sqrt{h\left(\frac{h'_r}
  {4h}+\frac{g'_r}{4g}\right)}}\,\,&dW^2_t.
 \end{alignat*}
We have seen in Sect.~\ref{nontrivialshiftinvariant} that the local
martingale parts 
$$M^1_t = \int_0^t  \left({\frac{h'_r}{4h}
  +\frac{g'_r}{4g}}\right)^{-1/2} dW^1 \text{  and } M^2_t = \int_0^t \left({h\left(\frac{h'_r}
  {4h}+\frac{g'_r}{4g}\right)}\right)^{-1/2} dW^2$$ 
of $\widetilde R_t$ and
$\widetilde S_t$ converge almost surely as $t\to\widetilde\zeta$. 
This suggests that the
component $\widetilde R_t$, when observed at times $t$ near 
$\infty$ (or when starting $X$ near $L(\infty)$), should behave 
like the solution $r(t) := r_0 + t$ of the deterministic
differential equation 
$$\dot r  = 1.$$ 
From the stochastic differential equation above we get
$$\widetilde S_t= S_0 + \int_0^t\frac{g'_s(\widetilde R_t,\widetilde
  S_t)}{g(\widetilde R_t,\widetilde S_t)h'_r(\widetilde R_t) +
  h(\widetilde R_t)g'_r(\widetilde R_t,\widetilde S_t)} \,ds + M^2_t,$$
where the local martingale $M^2_t$ converges as $t\to \infty$, and
$\widetilde R_t$ is expected to behave like $r_0 + t$, when the
starting point $(r_0,s_0,\alpha_0)$ of $X$ is close to $L(\infty)$. 
One might therefore expect $\widetilde S_t$ to behave (for $t$ near to $\infty$) 
like the solution $s(t)$, starting in~$s_0$, of the deterministic
differential equation 
\begin{equation}\label{deterministiccurve}
 \dot s = \frac{g'_s(r(t),s)}{g(r(t),s)h'_r(r(t)) + h(r(t))g'_r(r(t),s)}.
\end{equation}
It remains to make rigorous the meaning of ``should behave
like''. 

Considering the solutions $r(t), s(t)$ of the deterministic
 differential equations above, we note that
$\Gamma_{s_0}\colon \R_+\to \R_+\times \R$ given as $\Gamma_{s_0}(t) := (t, s(t))$ with
$\Gamma_{s_0}(0)= (0,s_0)$ is the trajectory of the ``drift'' vector field 
\begin{equation}\label{driftvectorfeld}
 V_d = \frac{\partial}{\partial r} + \frac{g'_s}{gh'_r+ hg'_r} \frac{\partial}{\partial s}
\end{equation}
starting in $(0,s_0) = L(s_0)$. 

As we are going to see below (see Remark \ref{limfortrajectories}), the
``endpoint'' $$\Gamma_{s_0}(+\infty)\equiv \lim_{t\to\infty} \Gamma_{s_0}(t)$$ of all the trajectories
is just $L(\infty)\in S_\infty(M)$. 
Furthermore, for every point $(r,s)\in\R_+\times \R$,
there is exactly one trajectory $\Gamma_{s_0}$ of $V_d$ with $\Gamma_{s_0}(r) =
(r,s)$, in other words, the union 
$$\bigcup_{s_0\in \R} \Gamma_{s_0}$$ 
defines a foliation of $H$. Recall that $H$ is one component of
$\H\setminus L$ and $M = (H\cup L)\times_g S^1$. Defining a coordinate 
transformation
\begin{equation}
\begin{split} 
\Phi\colon  \R_+ \times \R &\to \R_+\times \R\\
(r,s) &\mapsto (r,s_0)\equiv(\Phi_r(r,s),\Phi_s(r,s)),
\end{split}
\end{equation}
where $s_0$ is the starting point of the unique trajectory $\Gamma_{s_0}$
with $\Gamma_{s_0}(r) = (r,s)$, we obtain coordinates for $\R_+\times \R$ where
the trajectories $\Gamma_{s_0}$ of $V_d$ are horizontal lines. 

Applying the coordinate transformation $\Phi$ to the components
$\widetilde R_t$ and $\widetilde S_t$ of the Brownian motion, 
$$\Phi(\widetilde R_t,\widetilde
S_t) = (\Phi_r(\widetilde R_t,\widetilde S_t), \Phi_s(\widetilde
R_t,\widetilde S_t)),$$
we are able to compare the behaviour of the components $\widetilde R_t$ and
$\widetilde S_t$  with the trajectories $\Gamma_{s_0}$ of $V_d$,
in other words, with the deterministic solutions $r(t)$ and $s(t)$. 
The component $\Phi_r(\widetilde R_t,\widetilde S_t)$ obviously equals 
$\widetilde R_t$. Yet, knowing that
for $t\to\infty$ the new component $\Phi_s(\widetilde R_t,\widetilde
S_t)$ possesses a non-trivial limit, would mean that the Brownian paths (their
projection onto $(H\cup L)$, to be precise) finally approach the point $L(\infty)\in
S_\infty(M)$ along a (limiting) trajectory $\Gamma_{s_0}$,
where $s_0 = \lim_{t\to\infty} \Phi_s(\widetilde R_t,\widetilde
S_t)$. 
This would contribute another piece of non-trivial information to the asymptotic behaviour 
of Brownian motion, 
namely along which trajectory (or more precisely, along which surface of
rotation $\Gamma_{s_0}\times S^1$) a Brownian path finally exits the manifold~$M$.

It remains to verify that the above defined
component $\Phi_s(\widetilde R_t,\widetilde S_t)$ indeed has a
non-trivial limit as $t\to \infty$. As already seen,
$\Phi_s(\widetilde R_t,\widetilde S_t)$ is the starting point of the
deterministic curve $s(t)$, satisfying the differential equation
(\ref{deterministiccurve}) with $s(\widetilde R_t) = \widetilde
S_t$. The solution $s(t)$ is of the form 
$$s(t) = s_0 +\int_0^tf(r(u),s(u)) \,du$$ 
where $f= {g'_s}/{(gh'_r + hg'_r)}$. In
particular, $s(t)$ explicitly  depends on $s(u)$ for $u\leq t$. That is
the reason why, when applying It{\^o}'s formula to $\Phi_s(\widetilde
R_t,\widetilde S_t)$, there appear first order derivatives of the flow 
$$\Psi\colon \R_+\times \R \to \R_+\times \R,\quad \quad (r,s) \mapsto \Gamma_s(r)$$
with respect to the variable $s$. 
Estimating these terms does not seem to be trivial, nor
to provide good estimates in order to establish convergence
of $\Phi_s(\widetilde R_t,\widetilde S_t)$ as $t\to\infty$.

A possibility to circumvent this problem is to find a vector
field $V$ on $T(\R_+\times\R)$ of the form $\partial/\partial r + f(r) \partial/\partial s$ whose
trajectories also foliate $H$ and are not ``far off''  the
trajectories $\Gamma_{s_0}$ of $V_d$ -- in particular the trajectories of
$V$ have to  exit $M$ through the point $L(\infty)\in S_\infty(M)$ as well.

As seen in Sect.~\ref{nontrivialshiftinvariant}, we have 
$$\left|\,\frac{g'_s}{gh'_r+hg'_r} - p\,\right| \leq
\left|\,p\cdot\frac{1}{1+h}\,\right|. $$ 
Furthermore, for $r\geq T_2$ the function $q(r)$ is defined as $p_0/2 -
{40}/{h}$, in particular $q(r)$ does not differ much from the function
$p(r,s)$ which equals $p_0/2$ for $r$ and $s$ large. Hence $q(r)$
is  a good approximation for $g_s/(gh'_r + hg'_r)$ for $r$
large, and is independent of the variable $s$.

We therefore consider the vector field 
\begin{equation}\label{importantvectorfield}
V:= \frac{\partial}{\partial r} + q(r) \frac{\partial}{\partial s}. 
\end{equation} 
Starting in $(0,s_0)\in\R_+\times \R$ the trajectories $C_{s_0}$ of $V$ have the form
$$C_{s_0}(t) = \left(t,s_0+\int_0^{t} q(u) \,du\right).$$ 
As we are going to see below,
we also have $\lim_{t\to \infty}C_{s_0}(t) = L(\infty)$, see Remark
\ref{limfortrajectories},  and the union 
$$\bigcup_{s_0\in  \R} C_{s_0}$$ 
forms a foliation of $H$.

For $(r,s)\in\R_+\times\R$ there is exactly one
trajectory $C_{s_0}$ of $V$ with $C_{s_0}(r) = s$. Its starting point
$s_0$ can be computed as $s_0 = s-\int_0^r q(u) \,du$. We can therefore
define a coordinate transformation 
\begin{figure}
\includegraphics[width=12cm]{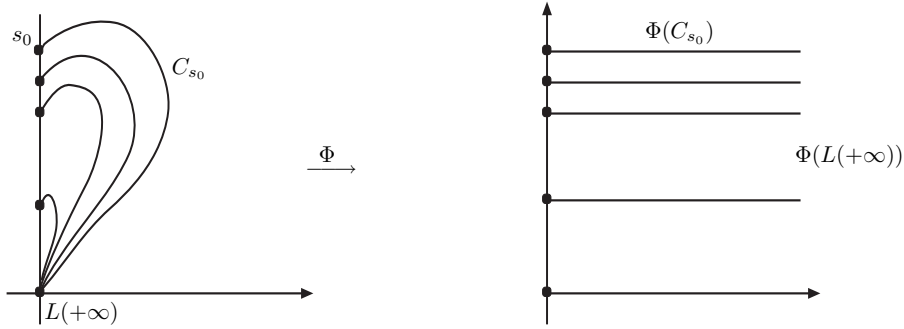}
\caption{Effect of the coordinate transformation $\Phi$}
\label{figure4}
\end{figure}
\begin{equation}
\begin{split}
 \Phi\colon \R_+\times \R &\to \R_+\times \R,\\
(r,s) &\mapsto \left(r,s - \int_0^rq(u)\,du\right).
\end{split}
\end{equation}
As seen in Figure~\ref{figure4}, the trajectories
$C_{s_0}$ of $V$ are horizontal lines in the new coordinate system. 

In the changed coordinate system the components $\widetilde R_t$ and
$\widetilde S_t$ of $X$ then look like 
\begin{equation}
\Phi(\widetilde R_t, \widetilde S_t) = \left(\widetilde R_t,\widetilde S_t -
\int_0^{\widetilde R_t} q(u)\,du\right)\equiv \big(\Phi_r(\widetilde R_t,\widetilde S_t),
\Phi_s(\widetilde R_t,\widetilde S_t)\big).
\end{equation}
As we have proven in Sect.~\ref{nontrivialshiftinvariant}, 
$$\lim_{t\to\widetilde \zeta} \left[\widetilde S_t -\int_0^{\widetilde R_t} q(u)
\,du\right]\equiv \lim_{t\to\widetilde \zeta} \Phi_s(\widetilde R_t,\widetilde S_t)$$
exists and is a non-trivial shift-invariant random variable. Therefore
the non-triviality of $\lim_{t\to\infty} \Phi_s(\widetilde
R_t,\widetilde S_t)$ allows to distinguish 
Brownian paths when examining along which of the trajectories
$C_{s_0}$ of $V$, or more precisely along which surface of rotation
$C_{s_0}\times S^1$, the path eventually exits the manifold. 
This gives the geometric significance of the limit variable
$$\lim_{t\to\infty}\left[\widetilde S_t -\int_0^{\widetilde R_t} q(u)\,du\right].$$

It finally remains to complete the section with the proof that the
trajectories of the vector field $V$, as well as the trajectories of
the vector field $V_d$, exit the manifold $M$ in the point $L(\infty)$. 
This is done in the final remark.

\begin{remark}\label{limfortrajectories}
We have:
$$\lim_{t\to \infty} C_{s_0}(t) = L(\infty) \quad\text{ and }\quad \lim_{t\to\infty}
\Gamma_{s_0}(t) = L(\infty)\quad \text{for every } s_0 \in \R. $$
\end{remark}
 
\begin{proof}
It is enough to show that the ``$s$-component'' of each trajectory
$C_{s_0}$, $\Gamma_{s_0}$ resp., converges to $\infty$ with $t\to \infty$. The
s-component of $C_{s_0}$ is 
$$ s(t) = s_0 + \int_0^tq(r)\,dr,$$
the s-component of $\Gamma_{s_0}$ 
$$ s(t) = s_0 + \int_0^t\frac{g'_s(r,s(r))}{g(r,s(r))h'_r(r) + h(r)g'_r(r,s(r))}\,
dr.$$
Since for $t\geq T_2$ we have $$q(r) = \frac12 p_0(r) - \frac{40}{h},$$ it
follows immediately that $\lim_{t\to\infty} \int_0^tq(r)\,dr = \infty$ because
$\int_0^\infty\frac{40}{h(r)} \,dr <\infty$ and 
$$\lim_{t\to\infty}\int_0^tp_0(r) \,dr = \infty,$$ due to
Lemma \ref{propofg}, property \eqref{prop3}.

For the second term we notice that $s_0 +
\int_0^tf(r,s(r))\,dr $ with $f=g'_s/(gh'_r + hg'_r)$ is
nondecreasing as the integrand is positive. Moreover we have seen above
and in the foregoing sections that 
$$\left|\,\frac{g'_s}{gh'_r +
 hg'_r}-p\,\right|\leq \left|\,p\cdot\frac1{1+h}\,\right|.$$ 
As 
$$\int_0^\infty
p(r,s)\frac1{1+h(r)}\,dr \leq \int_0^\infty\frac1{1+h(r)} \,dr <\infty$$ it suffices
to show that $\lim_{t\to\infty} \int_0^tp(r,s(r))\,dr = \infty$. This is true as
$s(r) \geq s_0$ for all $r\leq t$ and therefore for $r$ large enough we have
$p(r,s(r)) = p_0(r)/2$. The claimed result then follows exactly as above. 
\end{proof}

\section{The Poisson boundary of $M$}\label{Section5}
\setcounter{equation}0

In this section we prove that any shift-invariant event for Brownian motion
in $M$ is measurable with respect to the random variable $(Z_\zeta,A_\zeta)$
constructed in Sect.~\ref{nontrivialshiftinvariant}.  
As a consequence this allows
a complete characterization of the Poisson boundary of~$M$.

We perform the change of variable $\Psi \colon (r,s,a)\mapsto (r,z,a)$ with
$z=s-\int_0^rq(u)\,du$. Further let $t(r,s)=\log g(r,s)$ and
$$
m(r,s)=\f2{\sqrt{\f{h_r'}{h}(r)+t_r'(r,s)}}.
$$
In the new coordinates, the three components ${\widetilde
  X}_t(x)=({\widetilde R}_t(x),{\widetilde Z}_t(x), {\widetilde
  A}_t(x))=({\widetilde R}_t,{\widetilde Z}_t,{\widetilde A}_t)$ 
of time-changed Brownian motion satisfy
\begin{align}
\label{E1}
\begin{split}
  d{\widetilde R}_t&=dt +m({\widetilde R}_t,{\widetilde S}_t)\,dW^1_t,\\
  d{\widetilde Z}_t&=\left(-p({\widetilde R}_t,{\widetilde S}_t)\f{h'_r}{4h}({\widetilde R}_t)m^2({\widetilde R}_t,{\widetilde S}_t)+-\f12q'({\widetilde R}_t)m^2({\widetilde R}_t,{\widetilde S}_t)\right)\,dt\\
  &\quad\ +(p({\widetilde R}_t,{\widetilde S}_t)-q({\widetilde R}_t))\,dt-q({\widetilde R}_t)m({\widetilde R}_t,{\widetilde S}_t)\,dW_t^1+\f{m({\widetilde R}_t,{\widetilde S}_t)}{\sqrt{h({\widetilde R}_t)}}\,dW_t^2,\\
  d{\widetilde A}_t&=\f{m({\widetilde R}_t,{\widetilde S}_t)}{\sqrt{g({\widetilde R}_t,{\widetilde
        S}_t)}}\,dW^3_t.
\end{split}
\end{align}
In case when ${\widetilde R}_t$ exceeds the positive constant $T_2$ defined in
Lemma~\ref{propsofq}, the equations simplify to
\begin{align}
\label{E2}
\begin{split}
  d{\widetilde R}_t&=dt +m({\widetilde R}_t,{\widetilde S}_t)\,dW_t^1,\\
  d{\widetilde Z}_t&=m^2({\widetilde R}_t,{\widetilde S}_t)\left(-\f12p_0({\widetilde R}_t)\f{h'_r}{4h}({\widetilde R}_t)-\f12q'({\widetilde R}_t)\right)\,dt+\f{40}{h(\widetilde R_t)}\,dt\\
  &\qquad+m({\widetilde R}_t,{\widetilde S}_t)\left(-q({\widetilde R}_t)\,dW_t^1+\f{1}{\sqrt{h({\widetilde R}_t)}}\,dW_t^2\right),\\
  d{\widetilde A}_t&=\f{m({\widetilde R}_t,{\widetilde S}_t)}{\sqrt{g({\widetilde R}_t,{\widetilde
        S}_t)}}\,dW^3_t.
\end{split}
\end{align}
In the subsequent estimates we always assume $r\ge T_2$ and $s\ge 0$.
Since $p_0$ is nonincreasing and $p_0h$ is increasing and strictly convex, we
have $(p_0h)'\ge 0$ which yields
$$
0\ge p_0'\ge -p_0\f{h'}{h}
$$
and consequently
\begin{equation}
\label{E3}
|p_0'(r)|\le 2p_0(r),\quad |q'(r)|\le p_0(r).
\end{equation}
It is easy to see that
\begin{equation}
\label{E4}
|(h^{-1/2})'|\le h^{-1/2}.
\end{equation}

We want to estimate $m$, $m_r'$ and $m_s'$.  We know that $t_s'=ph \,t_r'$.
Recall that for $s_0\ge 0$, $r_0>0$, the curve $s\mapsto (\g(r_0,s_0,s),s)$ is the
integral curve to the vector field $\partial_s-ph(r,s)\partial_r$ satisfying
$\g(r_0,s_0,s_0)=r_0$. Recall further that $f(r_0,s_0)$ is defined by
$$
f(r_0,s_0)=\g\left(r_0,s_0,-\ell(f(r_0,s_0))\right),
$$
see~Eq.~\eqref{E54}, and that by definition of the metric $g$, we have
$$
g(r_0,s_0)=g_0\left(f(r_0,s_0)\right)\quad \hbox{where}\ \ 
g_0(r)=d_2e^{d_1\sinh^2 r}
$$
for some $d_1,\, d_2>0$, see~Eq.~\eqref{E55}.  
Letting $$t_0(r)=\log g_0(r),$$
the function $t(r,s)=\log g(r,s)=t_0(f(r,s))$ satisfies
\begin{equation}
\label{E5}
t(r_0,s_0)=d_1\sinh^2 (f(r_0,s_0))+\log d_2,
\end{equation}
which yields
\begin{equation}
\label{E6}
|t_r'|\le 2d_1(h\circ f)|f_r'|\quad \hbox{and}\quad|t_s'|=ph|t_r'|\le 2d_1ph(h\circ f)|f_r'|.
\end{equation}

\begin{lemma}
\label{L1}
There exists two constants $C_1, C_2 >1$ such that on the set $\{(r,s)\in {[3,\infty[}\times
{[0,\infty[}\}$,
\begin{align}
\label{E8}
\f{(p_0h)\circ f}{p_0h}\le f_r'\le C_1 \f{(p_0h)\circ f}{p_0h},
\end{align}
and
\begin{align}
\label{E7}
{C_2}^{-1}h\circ f\f{(p_0h)\circ f}{p_0h}\le t_r'\le C_2\, h\circ
f\f{(p_0h)\circ f}{p_0h}.
\end{align}
\end{lemma}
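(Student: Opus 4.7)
The plan is to exploit the implicit definition~\eqref{E54} together with the variational (Jacobi) equation for trajectories of the field $\p_s-ph\,\p_r$. First I would differentiate $f(r_0,s_0)=\g(r_0,s_0,-\ell(f(r_0,s_0)))$ with respect to $r_0$. At the endpoint $s=-\ell(f)$ the transverse velocity $\g_s'=-ph$ vanishes because $\chi(f,-\ell(f))=\xi(0)=0$, so the chain-rule terms involving $f_r'$ on the right drop out and one is left with $f_r'(r_0,s_0)=\g_{r_0}'(r_0,s_0,-\ell(f))$. Since $\g_{r_0}'$ satisfies the linear ODE $\dot{\g}_{r_0}'=-(ph)_r'(r(u),u)\,\g_{r_0}'$ with $\g_{r_0}'(s_0)=1$, this gives
$$\log f_r'(r_0,s_0)=\int_{-\ell(f)}^{s_0}(ph)_r'(r(u),u)\,du.$$

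The main computational idea is to decompose $(ph)_r'=\chi(p_0h)'+\ell'(r)(ph)_s'$, which is immediate from $p=\xi(s+\ell(r))p_0(r)$. The first term, when integrated against $du$ along the trajectory, telescopes: the substitution $du=-dr/(\chi p_0h)$ (valid because $\dot r=-\chi p_0h$) cancels the $\chi p_0h$ factor and yields exactly $\log[(p_0h)(f)/(p_0h)(r_0)]$. For the second term I would introduce $y(u):=u+\ell(r(u))$. On $r\ge 3$ one has $\ell'=\e/(p_0h)$, so $\dot y=1-\ell'\,\chi p_0h=1-\e\xi(y)\in[1-\e/2,1]$, which makes $y$ a smooth increasing bijection from $[-\ell(f),s_0]$ onto $[0,s_0+\ell(r_0)]$. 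Since $\ell'(ph)_s'=\e\xi'(y)$ along the trajectory, changing variable $u\mapsto y$ gives
$$\int_{-\ell(f)}^{s_0}\ell'(ph)_s'\,du=\int_0^{s_0+\ell(r_0)}\f{\e\xi'(y)}{1-\e\xi(y)}\,dy\le\f{\e}{2-\e},$$
using $\int_0^\infty\xi'=1/2$ and $\e<1/4$. Both summands in the decomposition are nonnegative (since $p_0h$ is nondecreasing on $[3,\infty[$), so the two estimates combine to yield~\eqref{E8} with $C_1=\exp(\e/(2-\e))$.

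Estimate~\eqref{E7} on $t_r'$ then follows at once from~\eqref{E5}: differentiating $t=d_1\sinh^2f+\log d_2$ gives $t_r'=2d_1\sinh f\cosh f\cdot f_r'=2d_1\tanh f\cdot h(f)\cdot f_r'$. Because $\dot r=-ph\le 0$ along the trajectory forces $f\ge r_0\ge 3$, the factor $\tanh f$ lies in the bounded interval $[\tanh 3,1]$, so $2d_1\tanh f\cdot h(f)\asymp h(f)$ with constants depending only on $d_1$ and $\tanh 3$. Combining with~\eqref{E8} gives the two-sided bound by $h\circ f\cdot(p_0h)\circ f/(p_0h)$.

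The obstacle I anticipate is the apparent endpoint degeneracy: since $p$ vanishes at $s=-\ell(f)$, attempting a single-sweep change of variable to $r$ in the integral for $\log f_r'$ would involve dividing by $\chi p_0h$ and produce a divergent expression at the endpoint. The splitting $(ph)_r'=\chi(p_0h)'+\ell'(ph)_s'$ is precisely the device that isolates a clean logarithmic telescoping from a ``cutoff-generated'' piece; the ODE $\dot y=1-\e\xi(y)$ encodes the transversality of the trajectory to the boundary curve $y=0$ and is what delivers the uniform bound $\e/(2-\e)$ independently of $(r_0,s_0)$.
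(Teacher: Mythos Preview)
Your proof is correct and follows essentially the same route as the paper: both reduce $f_r'$ to the variational derivative $\g_1'(r_0,s_0,-\ell(f))$ (using that $\chi$ vanishes at the endpoint to kill the chain-rule term), solve the linear ODE for $\g_1'$, and split $(ph)_r'=\chi(p_0h)'+\xi'\ell'\,p_0h$ into a telescoping part yielding the factor $(p_0h)\circ f/(p_0h)$ and a uniformly bounded cutoff remainder. The only difference is cosmetic: the paper bounds the cutoff integral crudely by observing that $\xi'$ is supported where $y\le 4$ and $\dot y\ge 3/4$, obtaining $C_1=e^{4\e}$; your change of variables $u\mapsto y$ and the identity $\ell'(ph)_s'=\e\xi'(y)$ give the tighter and cleaner bound $C_1=\exp(\e/(2-\e))$ (in fact the integral evaluates exactly to $-\log(1-\e\xi(s_0+\ell(r_0)))$). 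The derivation of~\eqref{E7} from~\eqref{E8} via $t_r'=2d_1\tanh f\cdot h(f)\cdot f_r'$ with $\tanh f\in[\tanh 3,1)$ is the same as the paper's use of $t_0'\asymp h$.
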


\begin{proof}
  It suffices to verify \eqref{E8}. Indeed, assuming that \eqref{E8} is true, then from the equality $t_r'=(t_0'\circ f)f_r'$
  and the bound $ h(r)/{C'}\le t_0'(r)\le C' h(r)$ for some $C'>1$ and
  all $r\ge 3$, we obtain \eqref{E7}.
  
  To establish \eqref{E8}, let $(r_0,s_0)\in {[3,\infty[}\times
  {[0,\infty[}$ and denote by $\g_i'$ the derivative of $\g$ with respect to the
  $i$-th variable, $i=1,2,3$. We then have
\begin{equation}
\label{E9}
\g_3'(r_0,s_0,s)=-(ph)\left(\g(r_0,s_0,s),s\right)
\end{equation}
and
\begin{equation}
\label{E10}
\g(r_0,s_0,-\ell(f(r_0,s_0)))=f(r_0,s_0).
\end{equation}
Differentiating \eqref{E10} yields
\begin{align*}
  f_r'&(r_0,s_0)\\&=\g_1'(r_0,s_0,-\ell(f(r_0,s_0)))-\g_3'(r_0,s_0,-\ell(f(r_0,s_0)))\ell'(f(r_0,s_0))f_r'(r_0,s_0)\\
  &=\g_1'(r_0,s_0,-\ell(f(r_0,s_0))+(ph)\left(-\ell(f(r_0,s_0)),f(r_0,s_0)\right)\ell'(f(r_0,s_0))f_r'(r_0,s_0)\\
  &=\g_1'(r_0,s_0,-\ell(f(r_0,s_0)),
\end{align*}
since $ph(r,s)=\chi(r,s)(p_0h)(r)$ and
$\chi\left(f(r_0,s_0),-\ell(f(r_0,s_0))\right)=0$ (see \cite{Borbely:98} p.~229).  
Consequently it is sufficient to prove that
\begin{align}
\label{E12}
\f{(p_0h)\circ f}{p_0h}(r_0,s_0)\le \g_1'(r_0,s_0,-\ell(f(r_0,s_0))\le C
\f{(p_0h)\circ f}{p_0h}(r_0,s_0).
\end{align}

To this end, we differentiate \eqref{E9} with respect to~$r$ and obtain
\begin{equation}
\label{E13}
\g_{13}''(r_0,s_0,s)=-(ph)_r'\left(\g(r_0,s_0,s),s\right)\g_1'(r_0,s_0,s).
\end{equation}
Solving the last equation with initial condition $\g_1'(r_0,s_0,s_0)=1$ (which
comes from $\g(r_0,s_0,s_0)=r_0$) gives
\begin{align*}
  &\g_1'\left(r_0,s_0,-\ell(f(r_0,s_0))\right)\\
  &=\exp\left(\int_{-\ell(f(r_0,s_0))}^{s_0}(ph)_r'(\g(r_0,s_0,s),s)\,ds\right)\\
  &=\exp\left(\int_{-\ell(f(r_0,s_0))}^{s_0}\chi(\g(r_0,s_0,s),s)(p_0h)'(\g(r_0,s_0,s))\,ds\right)\\
  &\quad \quad
  \times\exp\left(\int_{-\ell(f(r_0,s_0))}^{s_0}\chi_r'(\g(r_0,s_0,s),s)(p_0h)(\g(r_0,s_0,s))\,ds\right)\\
  &=\exp\left(\int_{-\ell(f(r_0,s_0))}^{s_0}ph(\g(r_0,s_0,s),s)\f{(p_0h)'(\g(r_0,s_0,s))}{(p_0h)(\g(r_0,s_0,s))}\,ds\right)\\
  &\quad \quad
  \times\exp\left(\int_{-\ell(f(r_0,s_0))}^{s_0}\chi_r'(\g(r_0,s_0,s),s)(p_0h)(\g(r_0,s_0,s))\,ds\right)\\
  &=\exp\left(\int_{-\ell(f(r_0,s_0))}^{s_0}-\g_3(r_0,s_0,s)\f{(p_0h)'(\g(r_0,s_0,s))}{(p_0h)(\g(r_0,s_0,s))}\,ds\right)\\
  &\quad \quad
  \times\exp\left(\int_{-\ell(f(r_0,s_0))}^{s_0}\chi_r'(\g(s_0,r_0,s),s)(p_0h)(\g(r_0,s_0,s))\,ds\right)
\end{align*}
where for the last equality we used \eqref{E9}. 
We thus get
\begin{align*}
  \g_1'&\left(r_0,s_0,-\ell(f(r_0,s_0))\right)\\
  &=\exp\left(\int_{-\ell(f(r_0,s_0))}^{s_0}-\f{d}{ds}\left(\log (p_0h)(\g(r_0,s_0,s))\right)\,ds\right)\\
  &\quad \quad
  \times\exp\left(\int_{-\ell(f(r_0,s_0))}^{s_0}\chi_r'(\g(r_0,s_0,s),s)(p_0h)(\g(r_0,s_0,s))\,ds\right)\\
  &=\f{p_0h\left(f(r_0,s_0)\right)}{p_0h(r_0)}
  \exp\left(\int_{-\ell(f(r_0,s_0))}^{s_0}\chi_r'(\g(r_0,s_0,s),s)p_0h(\g(r_0,s_0,s))\,ds\right)\\
    &=\f{p_0h\left(f(r_0,s_0)\right)}{p_0h(r_0)}\exp\left(\int_{-\ell(f(r_0,s_0))}^{s_0}\xi'(s+\ell(\g(r_0,s_0,s)))
  (\ell'p_0h)(\g(r_0,s_0,s))\,ds\right)\\
    &=\f{p_0h\left(f(r_0,s_0)\right)}{p_0h(r_0)}\exp\left(\int_{-\ell(f(r_0,s_0))}^{s_0}\xi'(s+\ell(\g(r_0,s_0,s)))
  \e\,ds\right),
\end{align*}
since for $s\in [-\ell(f(r_0,s_0)), s_0]$, $\di \g(r_0,s_0,s)\ge
\g(r_0,s_0,s_0)=r_0\ge 3$ and $\ell'(r)={\e}/({p_0h(r)})$ for $r\ge 3$ (see
\cite{Borbely:98} p. 229 and Sect.~\ref{Section2}). Hence we have
\begin{align*}
  &\g_1'\left(r_0,s_0,-\ell(f(r_0,s_0))\right)\\
  &=\f{p_0h\left(f(r_0,s_0)\right)}{p_0h(r_0)}\exp\left(
  \int_{-\ell(f(r_0,s_0))}^{-\ell(f(r_0,s_0))+8}\xi'(s+\ell(\g(r_0,s_0,s)))
  \e\,ds\right)
\end{align*}
where we used the fact that $\xi'(s')=0$ when $s'>4$, and
$$\f{d}{ds}(s+\ell(\g(r_0,s_0,s))) =1-\ell'ph(\g(r_0,s_0,s))\ge 3/4$$
(\cite{Borbely:98} (2.15) and $\e\le 1/4$).  

Since $\xi'$ is nonnegative and
bounded by $1/2$, we have
$$
1\le\exp\left(\int_{-\ell(f(r_0,s_0))}^{-\ell(f(r_0,s_0))+8}\xi'(s+\ell(\g(r_0,s_0,s)))
\e\,ds\right)\le \exp(4\e)
$$
which finally gives
\begin{equation}
\label{E14}
\f{(p_0h)\circ f}{p_0h}(r_0,s_0)\le \g_1'\big(r_0,s_0,-\ell(f(r_0,s_0))\big)
\le \exp(4\e) \f{(p_0h)\circ f}{p_0h}(r_0,s_0).
\end{equation}
This is the desired result.
\end{proof}

\begin{lemma}
\label{L2}
There exist constants $C_1, C_2>0$ such that
\begin{align}
\label{E17}
|f_{rr}''|\le C_1 \left(\f{(p_0h)\circ f}{p_0h}\right)^2
\end{align}
and
\begin{align}
\label{E15}
|t_{rr}''|\le C_2 h\circ f\left(\f{(p_0h)\circ f}{p_0h}\right)^2.
\end{align}
\end{lemma}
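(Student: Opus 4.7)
The plan is to differentiate once more the identity for $f_r'$ derived in the proof of Lemma \ref{L1} and then deduce the bound on $t_{rr}''$ from the fact that $t = t_0 \circ f$ with $t_0(r) = d_1 \sinh^2 r + \log d_2$. Concretely, from the computation leading to \eqref{E14} we have
\begin{equation*}
f_r'(r_0,s_0) = \frac{(p_0 h)\circ f}{p_0 h}(r_0,s_0)\, E(r_0,s_0),
\quad 1 \le E(r_0,s_0) \le \exp(4\e),
\end{equation*}
where $E$ is the exponential of an integral of $\xi'(s+\ell(\g(r_0,s_0,s)))\,\e$ over an interval of length at most $8$. Differentiating with respect to $r_0$ gives three types of terms: one from differentiating $(p_0 h)\circ f$, one from differentiating $(p_0 h)(r_0)$ in the denominator, and one from differentiating $E$.

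For the first two I would use that, since $p_0$ is nonincreasing and $p_0 h$ is increasing, $0\le (p_0 h)'/(p_0 h)\le h'/h$, which is bounded on $[3,\infty)$; combined with the bound $f_r' \le C_1 (p_0 h)\circ f/(p_0 h)$ from \eqref{E8} and the fact that $f(r_0,s_0)\ge r_0$ for $s_0\ge 0$ (so that $(p_0 h)\circ f/(p_0 h)\ge 1$), both contributions are controlled by a constant multiple of $\bigl((p_0 h)\circ f/(p_0 h)\bigr)^2$. The third contribution requires differentiating the integrand $\xi'(s+\ell(\g(r_0,s_0,s)))$ with respect to $r_0$: this brings down a factor of $\ell'(\g)\,\g_1'$, which via $\ell'=\e/(p_0 h)$ and \eqref{E8} is again dominated by $(p_0 h)\circ f/(p_0 h)$; since $E$ itself is bounded, the whole term fits into the same quadratic bound, yielding \eqref{E17}.

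For \eqref{E15} I would simply differentiate the identity $t_r' = t_0'(f)\,f_r'$ once more:
\begin{equation*}
t_{rr}''(r_0,s_0) = t_0''(f(r_0,s_0))\,\bigl(f_r'(r_0,s_0)\bigr)^2 + t_0'(f(r_0,s_0))\,f_{rr}''(r_0,s_0).
\end{equation*}
Because $t_0(r)=d_1\sinh^2 r+\log d_2$, one has $|t_0'(r)|\le C\,h(r)$ and $|t_0''(r)|\le C\,h(r)$ for $r\ge 3$. Inserting the bound from \eqref{E8} into the first summand and the already established \eqref{E17} into the second yields \eqref{E15}.

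The main obstacle will be the bookkeeping in the third term above, i.e.~differentiating the exponential factor $E$ through its dependence on $f(r_0,s_0)$ and on the flow $\g(r_0,s_0,\cdot)$. Here one needs that $\xi'$ is bounded (by $1/2$), that the integration range has bounded length, and that each of the quantities $\ell'\circ \g$, $\g_1'$, $f_r'$ entering the derivative can be re-expressed in terms of $(p_0 h)\circ f/(p_0 h)$ times a bounded factor, so that no new unwanted growth appears. Once this is done, both estimates follow by the same \emph{pattern}: differentiate the L1-identities, control each factor by $(p_0 h)\circ f/(p_0 h)$ and $h\circ f$, and absorb lower-order contributions using $(p_0 h)\circ f/(p_0 h)\ge 1$.
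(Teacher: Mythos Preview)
Your proposal is correct and follows exactly the paper's approach: differentiate the formula $f_r'=\dfrac{(p_0h)\circ f}{p_0h}\,E$ obtained in Lemma~\ref{L1}, bound each resulting piece by a constant times $\bigl((p_0h)\circ f/(p_0h)\bigr)^2$, and then derive \eqref{E15} from $t_{rr}''=(t_0''\circ f)(f_r')^2+(t_0'\circ f)f_{rr}''$ together with $|t_0'|,|t_0''|\le C\,h$. The only bookkeeping you gloss over is the boundary term arising from the $r_0$-dependence of the integration limits $-\ell(f(r_0,s_0))$ (the paper records it as $f_r'\cdot \ell'(f)\,f_r'\,\xi'(0)$) and the intermediate estimate $\g_1'(r_0,s_0,s)\le \g_1'(r_0,s_0,-\ell(f))=f_r'$, which does not come directly from \eqref{E8} but from $(ph)_r'\ge 0$; both are made explicit in the paper and fit into your scheme without difficulty.
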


\begin{proof}
 Assume that \eqref{E17} is true.  From the identity
\begin{equation}
\label{E16}
t_{rr}''=(t_0''\circ f)(f_r')^2+(t_0'\circ f)f_{rr}'',
\end{equation}
along with~\eqref{E8} and the fact that $0\le t_0'\le 2d_1 h$, $0\le
t_0''\le 4 d_1h$, we obtain~\eqref{E15}.

To establish \eqref{E17} we first note that
\begin{align*}
  f_{r}'(r_0,s_0)&=\g_{1}'(r_0,s_0,-\ell(f(r_0,s_0))\end{align*} and
\begin{align}
  &\g_{1}'(r_0,s_0,-\ell(f(r_0,s_0)))\notag \\
  &\qquad=\f{p_0h\left(f(r_0,s_0)\right)}{p_0h(r_0)}
  \exp\left(\int_{-\ell(f(r_0,s_0))}^{-\ell(f(r_0,s_0))+8}\xi'(s+\ell(\g(r_0,s_0,s)))
  \e\,ds\right).\label{E18}
\end{align}
Hence, we have
\begin{align}\label{formula5.18}
\begin{split}  &f_{rr}''(r_0,s_0)\\
  &=\f{(p_0h)'\left(f(r_0,s_0)\right)f_r'(r_0,s_0)}{p_0h(r_0)}
  \exp\left(\int_{-\ell(f(r_0,s_0))}^{-\ell(f(r_0,s_0))+8}\xi'(s+\ell(\g(r_0,s_0,s)))
  \e\,ds\right)\\
  &\quad-\f{p_0h\left(f(r_0,s_0)\right)(p_0h)'(r_0)}{(p_0h)^2(r_0)}
  \exp\left(\int_{-\ell(f(r_0,s_0))}^{-\ell(f(r_0,s_0))+8}\xi'(s+\ell(\g(r_0,s_0,s)))
  \e\,ds\right)\\
  &\quad+f_{r}'(r_0,s_0)\big(\ell'(f(r_0,s_0))f_r'(r_0,s_0)\xi'(0)\big)\\
   &\quad+f_{r}'(r_0,s_0)\int_{-\ell(f(r_0,s_0))}^{-\ell(f(r_0,s_0))+8}\xi''(s+\ell(\g(r_0,s_0,s)))
    \ell'(\g(r_0,s_0,s))\g_1'(r_0,s_0,s) \e\,ds.
\end{split}
\end{align}
Using $0\le (p_0h)'\le 2p_0h$, along with the boundedness of the integral in the
exponential, the bound for $f_r'$, the boundedness of $\xi'$, $\xi''$ and
$\ell'$ (\cite{Borbely:98} (2.15)), and the fact that
\begin{align*}
  0&\le \g_1'(r_0,s_0,s)\\
  &=\exp\left(\int_s^{s_0}(ph)_r'( \g(r_0,s_0,u),u)\,du\right)\\
  &\le \exp\left(\int_{-\ell(f(r_0,s_0))}^{s_0}(ph)_r'(
    \g(r_0,s_0,u),u)\,du\right)\quad\hbox{(since $(ph)_r'\ge 0$, see
    \cite{Borbely:98} p. 229)}\\
  &=\g_{1}'(r_0,s_0,-\ell(f(r_0,s_0))=f_r'(r_0,s_0),
\end{align*}
we get the wanted bound.
\end{proof}

\begin{lemma}
\label{L3}
There exist $\b>1$ and $r_0>0$ such that for all $r\ge r_0$ and $s\ge 0$,
\begin{equation}
\label{E19}
 f(r,s)\ge (p_0h)^\b(r).
\end{equation}
\end{lemma}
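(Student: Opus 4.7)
The plan is to exploit the implicit characterization of $f(r,s)$ as the value where the integral curve of $-Z=-(ph)\p_r+\p_s$ starting at $(r,s)$ first meets the curve $\{s'=-\ell(r')\}$, and to reduce the statement to an autonomous scalar ODE together with an asymptotic inversion of $\ell$.

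First I would reduce to the case $s=0$ by monotonicity in $s$. Since distinct integral curves of a smooth vector field do not cross, the trajectory starting at $(r,s_0')$ with $s_0'>0$, parameterized by $r$, lies strictly above the one starting at $(r,0)$ at every common $r$-level, so it meets the boundary curve $\{s=-\ell(r)\}$ at a strictly larger $r$-value; this gives $f(r,s)\ge f(r,0)$, and it suffices to prove the lemma for $s=0$. Next I reparameterize the integral curve through $(r,0)$ by $u=-s\ge 0$, set $\tilde\g(u):=\g(r,0,-u)$, and introduce $\psi(u):=-u+\ell(\tilde\g(u))$. Using $\ell'(r)(p_0h)(r)=\e$ for $r\ge 3$ (see \eqref{E53}), a direct computation yields the decoupled equation
\[
\psi'(u)=-1+\e\,\xi(\psi),\qquad \psi(0)=\ell(r),\quad \psi\bigl(\ell(f(r,0))\bigr)=0.
\]

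The crucial estimate comes from the plateau region $\{\psi\ge 4\}$, on which $\xi(\psi)=1/2$ and $\psi$ decays at the constant rate $1-\e/2$. Integrating on $[0,u_*]$ with $u_*:=2(\ell(r)-4)/(2-\e)$ and using the identity $\ell(\tilde\g(u))=u+\psi(u)$, one obtains
\[
\ell\bigl(f(r,0)\bigr)\ge \ell\bigl(\tilde\g(u_*)\bigr)=u_*+4=(1+c)\ell(r)-4c,\qquad c:=\f{\e}{2-\e}>0,
\]
which simplifies to $\ell(f(r,0))\ge (1+c/2)\ell(r)$ as soon as $\ell(r)\ge 8$.

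The final and most delicate step is to transfer this inequality on $\ell$-values into the stated polynomial-type bound $f(r,0)\ge (p_0h)^\b(r)$. Since $\ell$ is strictly increasing on $[3,\infty)$, this amounts to finding some $\b>1$ such that $\ell((p_0h)^\b(r))\le (1+c/2)\ell(r)$ for all sufficiently large $r$. Using the explicit two-scale construction of Sect.~\ref{section24}---on the slow intervals $[r_{2n-1},r_{2n}]$ the ODE $y''=1/(2y)$ yields $p_0h(r)\sim r\sqrt{\log r}$ and hence $\ell(r)\sim 2\e\sqrt{\log r}$, while on the fast intervals $[r_{2n},r_{2n+1}]$ the integrand $1/(p_0h)$ decays exponentially and contributes negligibly to $\ell$---one checks that $\ell^{-1}(y)$ grows asymptotically like $\exp((y/(2\e))^2)$, so that $\ell^{-1}((1+c/2)\ell(r))$ exceeds $(p_0h)^\b(r)\sim r^\b(\log r)^{\b/2}$ for any fixed $\b\in(1,(1+c/2)^2)$ once $r$ is large enough. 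Controlling this asymptotic comparison uniformly as $r$ sweeps through the alternating slow and fast regimes, and in particular tracking how the contribution of $1/(p_0h)$ to $\ell$ accumulates across successive intervals, is the main technical obstacle of the proof.
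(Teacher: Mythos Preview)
Your reduction to $s=0$ and the ODE analysis for $\psi(u)=-u+\ell(\tilde\gamma(u))$ are correct and elegant; the resulting bound $\ell(f(r,0))\ge(1+c/2)\ell(r)$ is essentially equivalent to the paper's integral identity~\eqref{E50}. The genuine gap is in your final step, where you try to convert this into $f(r,0)\ge(p_0h)^\beta(r)$ by asymptotically inverting~$\ell$.

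The asymptotics you invoke do not hold uniformly. The claim $p_0h(r)\sim r\sqrt{\log r}$ on slow intervals is only valid for the ODE $y''=1/(2y)$ when started from $y$ of order~$1$, whereas after each fast interval $p_0h$ enters the next slow interval already exponentially large, with $(p_0h)'\approx 2(p_0h)$, so the local growth of $p_0h$ and the increment of $\ell$ depend on the entire history of the construction. Since the interval lengths $r_{n+1}-r_n$ are only constrained by qualitative conditions (certain integrals exceed~$1$), no explicit asymptotic for $\ell^{-1}$ can be extracted, and the comparison $\ell^{-1}((1+c/2)\ell(r))\ge(p_0h)^\beta(r)$ is not verifiable this way. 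You yourself flag this as ``the main technical obstacle,'' and indeed it is unresolved.

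The paper sidesteps the inversion entirely. Working with the auxiliary function $f_1(r)=\gamma(r,0,-\ell(r)+4)\le f(r,0)$, it differentiates the analogue of your $\ell$-inequality (namely~\eqref{E50}) with respect to~$r$ to obtain the closed ODE
\[
f_1'(r)=\frac{2+\e}{2}\,\frac{(p_0h)(f_1(r))}{(p_0h)(r)}.
\]
Then two purely qualitative facts---the convexity of $p_0h$, giving $(p_0h)(f_1)\ge(p_0h)'(r)(f_1-r)$, and the divergence $(f_1(r)-r)/r\to\infty$ (which follows from $f_1-r\ge\tfrac12(\ell(r)-4)\,p_0h(r)$)---combine to yield the differential inequality
\[
\frac{f_1'(r)}{f_1(r)}\ge\beta\,\frac{(p_0h)'(r)}{(p_0h)(r)},\qquad \beta=\lambda\,\frac{2+\e}{2}>1,
\]
which integrates directly to $f_1(r)\ge C(p_0h)^\beta(r)$. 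No information about the two-scale structure is used beyond the convexity and monotonicity of $p_0h$ already built into Lemma~\ref{propofg}.
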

\begin{proof}
  Since $f(r,s)\ge f(r,0)$ for $s\ge 0$,  it is sufficient to establish
  \eqref{E19} for $s=0$.  
Let $r_0$ be such that $\ell(r_0)\ge 4$. For $r\ge r_0$, define $f_1(r)$ as
  $$
  \g(r,0,-\ell(r)+4)=f_1(r).
  $$
  Since $-\ell(f(r,0))<-\ell(r)+4$, we have $f_1(r)<f(r,0)$. Hence it is
  sufficient to establish \eqref{E19} with $f_1(r)$ in place of $f(0,r)$.
  Writing
  $$
  -\ell(r)+4=\int_r^{f_1(r)}\f{d}{du}\g^{-1}(r,0,\cdot)(u)\,du
  $$
  we obtain
  $$
  \ell(r)-4=\int_r^{f_1(r)}\f{1}{ph(u,\g^{-1}(r,0,\cdot)(u))}\,du.
  $$
  But when $u\in [r, f_1(r)]$, we have
  $$\g^{-1}(r,0,\cdot)(u)\ge
  \g^{-1}(r,0,\cdot)(f_1(r)))=-\ell(r)+4,$$
  hence
  $$
  \ell(u)+\g^{-1}(r,0,\cdot)(u)\ge \ell(u)-\ell(r)+4\ge 4,
  $$
and this implies 
  $$
  \chi(u,\g^{-1}(r,0,\cdot)(u))=\xi(\ell(u)+\g^{-1}(r,0,\cdot)(u))=\f12.
  $$
 Thus, since $ph(r,s)=\chi(r,s)p_0(r)h(r)$, we obtain
\begin{equation}
 \label{E50}
\ell(r)-4=\int_r^{f_1(r)}\f{2}{p_0h(u)}\,du.
\end{equation}
Differentiating with respect to $r$ yields
$$
\ell'(r)=\f{2f_1'(r)}{p_0h(f_1(r))}-\f{2}{p_0h(r)},
$$
which combined with $\ell'=\e/(p_0h)$ gives
\begin{equation}
 \label{E51}
f_1'(r)=\f{(2+\e)\,p_0\,h(f_1(r))}{2p_0h(r)}.
\end{equation}
From the convexity of $p_0h$ it follows
$$
p_0h(f_1(r))\ge (p_0h)'(r)\,(f_1(r)-r).
$$
By Eq.~\eqref{E50}, taking into account that $p_0h$ is nondecreasing, we derive 
the inequality
$f_1(r)-r\ge (\ell(r)-4)\,p_0h(r)$. Choosing $\l\in {]0,1[}$
such that $\b:=\l\,({2+\e})/{2}>1$, this implies $f_1(r)-r>\l
f_1(r)$ for $r$ sufficiently large. We get
$$
p_0h(f_1(r))\ge (p_0h)'(r) \l f_1(r)
$$
and with \eqref{E51}
\begin{equation}
 \label{E52}
\f{f_1'(r)}{f_1(r)}\ge \b \f{(p_0h)'(r)}{p_0h(r)}.
\end{equation}
Integrating \eqref{E52} from $r_0$ sufficiently large and noting that
$f_1(r_0)\ge p_0h(r_0)$, we finally arrive at the desired result.
\end{proof}

We are now in a position to estimate $ m'_r$ and $m'_s$.  Since $h'_r/h$ and
its derivatives are bounded, we can estimate $|m'_r|$ by 
$C\left|\partial_r(t_r')^{-1/2}\right|$. Recall that $t(r,s)=t_0(f(r,s))$ which
gives
$$
(t_r')^{-1/2}=(t_0'\circ f)^{-1/2}(f'_r)^{-1/2}
$$
and
$$
\partial_r(t_r')^{-1/2} =-\f12(t_0'\circ f)^{-3/2}(t_0''\circ
f)(f_r')^{1/2}-\f12(t_0'\circ f)^{-1/2}(f_r')^{-3/2}(f_{rr}'').
$$
The last equation, along with \eqref{E8} and \eqref{E17}, and $(1/C)h\le
t_0,\ t_0',\ t_0''\le C h$ for some $C>1$ and all $r$ sufficiently large
(recall that $t_0(r)=d_1\sinh^2r+\log d_2$), gives
\begin{equation}
\label{E20}
|m_r'|\le C(p_0\circ f)^{1/2}(p_0h)^{-1/2}.
\end{equation}
Alternatively, using Eq.~\eqref{E20} together with Lemma \ref{L1} and estimating
$m^{-1}$ by $(t_r')^{1/2}$, we obtain
\begin{equation}
\label{E22}
\f{|m_r'|}{m}\le C\f{(p_0h)\circ f}{(p_0h)}.
\end{equation}

Similarly to $m_r'$, we estimate $|m_s'|$ with $
C\,|\partial_s(t_r')^{-1/2}|$. Since $t_s'=(ph)t_r'$, we have
\begin{align*}
  \partial_s(t_r')^{-1/2}&=-\f12 (t_r')^{-3/2}t_{sr}''\\
  &=-\f12 (t_r')^{-3/2}\partial_r\left(\f12p_0ht_r'\right)\\
  &=-\f12
  (t_r')^{-3/2}\left(\partial_r\left(\f12p_0h\right)t_r'+\f12p_0ht_{rr}''\right)\\
  &=-\f12 (t_r')^{-1/2}\partial_r\left(\f12p_0h\right)-\f14
  p_0h(t_r')^{-3/2}t_{rr''}.
\end{align*}
Recall that $|(p_0h)'|\le 2p_0h$.  The first term on the right can be bounded in
absolute value by
$$
C(h\circ f)^{-1/2}((p_0h)\circ f)^{-1/2}(p_0h)^{3/2}
$$
which is smaller than $C(p_0\circ f)^{1/2}(p_0h)^{1/2}$ since
$$
\f{p_0h}{h\circ f}\le \f{p_0h}{h\circ (p_0h)^\b}\le 1.
$$
By means of Lemma~\ref{L1} and Lemma~\ref{L2}, the second term on the
right can be bounded in absolute value by
$$
C(p_0\circ f)^{1/2}(p_0h)^{1/2}.
$$
Consequently we obtain
\begin{equation}
\label{E23}
\big|\partial_s(t_r')^{-1/2}\big|\le C(p_0\circ f)^{1/2}(p_0h)^{1/2}.
\end{equation}
The following lemma is a consequence of Lemma \ref{L3}, and Eqs.~\eqref{E20}, \eqref{E23}.
\begin{lemma}
\label{L4}
There exists $r_1\ge 0$ such that for every $r\ge r_1$ and $s\ge 0$,
\begin{equation}
\label{E24}
|m_r'|\le C(p_0\circ f)^{1/2}\,(p_0h)^{-1/2},\qquad  |m_s'|\le C(p_0\circ f)^{1/2}\,(p_0h)^{1/2},
\end{equation}
\begin{equation}
\label{E26}
\f{|m_r'|}{m}\le C\,\f{(p_0h)\circ f}{(p_0h)}\quad 
 \hbox{and}\quad  \f{|m_s'|}{m}\le C\big((p_0h)\circ
  f\big).
\end{equation}
\end{lemma}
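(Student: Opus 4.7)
The plan is to observe that the four bounds in Lemma~\ref{L4} follow by assembly from estimates already derived in the paragraphs immediately preceding the statement, with Lemma~\ref{L3} entering only to make the ratio $p_0h/(h\circ f)$ harmless once $r$ is sufficiently large. Since $h'_r/h$ and its derivatives are bounded, one has the pointwise comparisons $|m_r'|\le C\,|\partial_r(t_r')^{-1/2}|$ and $|m_s'|\le C\,|\partial_s(t_r')^{-1/2}|$, so it is enough to control the right-hand sides of these two expressions.

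Concretely, the first inequality of~\eqref{E24} is exactly~\eqref{E20}, obtained from the identity $\partial_r(t_r')^{-1/2}=-\tfrac12(t_0'\circ f)^{-3/2}(t_0''\circ f)(f_r')^{1/2}-\tfrac12(t_0'\circ f)^{-1/2}(f_r')^{-3/2}f_{rr}''$ together with Lemmas~\ref{L1}--\ref{L2} and the estimates $(1/C)h\le t_0,t_0',t_0''\le Ch$. The second inequality of~\eqref{E24} is exactly~\eqref{E23}. The first inequality of~\eqref{E26} is~\eqref{E22}, which I would re-derive by multiplying~\eqref{E20} by $m^{-1}\le C(t_r')^{1/2}$ and invoking the upper bound on $t_r'$ from~\eqref{E7}. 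Only the last inequality of~\eqref{E26} is genuinely new: combining the just-proved bound $|m_s'|\le C(p_0\circ f)^{1/2}(p_0h)^{1/2}$ with
$$
m^{-1}\le C(t_r')^{1/2}\le C\,(h\circ f)^{1/2}\frac{((p_0h)\circ f)^{1/2}}{(p_0h)^{1/2}}
$$
(again from Lemma~\ref{L1}) I obtain
$$
\frac{|m_s'|}{m}\le C\,(p_0\circ f)^{1/2}(h\circ f)^{1/2}((p_0h)\circ f)^{1/2}=C\,((p_0h)\circ f)^{1/2}\cdot((p_0h)\circ f)^{1/2}=C\,((p_0h)\circ f),
$$
which is the claim.

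The role of $r_1$ and Lemma~\ref{L3} is to guarantee the inequality $p_0h/(h\circ f)\le 1$ that was used (implicitly in the derivation of~\eqref{E23} and above) to simplify factors of the form $(h\circ f)^{-1/2}((p_0h)\circ f)^{-1/2}(p_0h)^{3/2}$ to $(p_0\circ f)^{1/2}(p_0h)^{1/2}$: since $f(r,s)\ge(p_0h)^\beta(r)$ for $r\ge r_0$, one gets $p_0h(r)\le(p_0h\circ f)^{1/\beta}\le h\circ f$ for $r$ large, and setting $r_1:=\max(r_0,T_2)$ ensures validity of all four bounds. There is therefore no substantive obstacle: the hard work lies in Lemmas~\ref{L1}--\ref{L3} and in the pointwise identities~\eqref{E20}, \eqref{E22}, \eqref{E23}, and Lemma~\ref{L4} is the tidy packaging of those estimates needed for the subsequent invariance arguments.
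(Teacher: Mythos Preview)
Your proposal is correct and follows exactly the approach of the paper, whose proof is the single sentence ``The following lemma is a consequence of Lemma~\ref{L3}, and Eqs.~\eqref{E20},~\eqref{E23}''; you have simply unpacked that sentence, including the derivation of the fourth bound $|m_s'|/m\le C\,((p_0h)\circ f)$ by combining~\eqref{E23} with the lower bound on $m^{-1}$ from~\eqref{E7}. One small slip: your chain $p_0h(r)\le((p_0h)\circ f)^{1/\beta}\le h\circ f$ justifying $p_0h/(h\circ f)\le 1$ is not quite right as written (the first inequality would need $p_0h(y)\ge y$, which is not given), but the paper's direct route $h\circ f\ge h\circ(p_0h)^\beta\ge p_0h$ (using monotonicity of $h$ and $\cosh^2(x)\ge x$) fixes this immediately and is what you clearly intend.
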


We recall that
\begin{equation}
\label{E27}
C^{-1}(h\circ f)^{-1/2}\f{(p_0h)^{1/2}}{((p_0h)\circ f)^{1/2}}\le m\le 
C(h\circ f)^{-1/2}\f{(p_0h)^{1/2}}{((p_0h)\circ f)^{1/2}}
\end{equation}
for some $C>1$.
Let us briefly explain how bounds for higher order derivatives can be
established. Differentiating expression \eqref{formula5.18}
with respect to $r$ yields, for $s\ge 0$ and $r$ sufficiently large, 
\begin{equation}
\label{Efrrr}
|f_{rrr}^{(3)}|\le C\left(\f{(p_0h)\circ
  f}{p_0h}\right)^3.
\end{equation}
(We remark that differentiation of each term in \eqref{formula5.18} amounts to multiplication by
an expression smaller than $C({(p_0h)\circ f})/({p_0h})$ in
absolute value). This easily yields, for $s\ge 0$ and $r$ sufficiently large,
\begin{equation}
\label{E28}
|t_{rrr}^{(3)}|\le C(h\circ f)\left(\f{(p_0h)\circ
  f}{p_0h}\right)^3
\end{equation}
and
\begin{equation}
\label{E31}
|m_{rr}''|\le C(h\circ f)^{-1/2}\left(\f{(p_0h)\circ
  f}{p_0h}\right)^{3/2}.
\end{equation}
Now exploiting the fact that $t_s'=pht_r'$, a straightforward 
calculation shows 
(for $s\ge 0$ and $r$ sufficiently large):
\begin{align}
\label{E29}
&|t_{srr}^{(3)}|\le C(p_0h)(h\circ f)\left(\f{(p_0h)\circ
    f}{p_0h}\right)^3,\\
\label{E30}
&|t_{ssr}^{(3)}|\le C(p_0h)^2(h\circ f)\left(\f{(p_0h)\circ
    f}{p_0h}\right)^3,\\
\label{E32}
&|m_{sr}''|\le C(p_0h)(h\circ f)^{-1/2}\left(\f{(p_0h)\circ
    f}{p_0h}\right)^{3/2},\\
\label{E33}
&|m_{ss}''|\le C(p_0h)^2(h\circ f)^{-1/2}\left(\f{(p_0h)\circ
    f}{p_0h}\right)^{3/2},\\
\label{E34}
&|m_{rr}''m|\le C(p_0h)^{-1}(p_0\circ f),\\
\label{E35}
&|m_{sr}''m|\le C(p_0\circ f),\\
\label{E36}
&|m_{ss}''m|\le C(p_0h)(p_0\circ f).
\end{align}

\begin{prop}
\label{P1}
For any $\a\in{\big]0,\f{\b-1}{2(\b+1)}\big[}$ there exists a constant 
$r_\a>0$ such that for all $r\ge r_\a$ and $s\ge 0$, 
\begin{equation}
\label{E41}
|m|,\ |m_r'|,\ |m_s'|,\ |mm_{rr}''|,\
 |mm_{rs}''|,\ |mm_{ss}''|\le h^{-\a}(r).
\end{equation}
\end{prop}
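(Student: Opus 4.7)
My approach is to express each of the six quantities as a simple monomial in four basic functions: $X := (p_0h)(r)$, $Y := (p_0h)(f(r,s))$, $Z := h(f(r,s))$, and $H := h(r)$. Using Eqs.~\eqref{E27}, \eqref{E24}, \eqref{E26}, \eqref{E34}--\eqref{E36}, each of the six quantities is controlled, up to a universal constant, by one of
$$X^{1/2}(YZ)^{-1/2},\quad Y^{1/2}X^{-1/2}Z^{-1/2},\quad X^{1/2}Y^{1/2}Z^{-1/2},$$
$$Y(XZ)^{-1},\quad YZ^{-1},\quad XYZ^{-1},$$
respectively. The problem thus reduces to comparing each of these monomials with $H^{-2\alpha}$.

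Two structural inputs drive the analysis. First, since integral curves of $-Z=-ph\,\partial_r+\partial_s$ move into larger $r$ when $s$ decreases, $f(r,s)\ge r$ for $s\ge 0$ and $r$ large; combined with monotonicity of $p_0h$ and $h$, this gives $Y\ge X$ and $Z\ge H$. Second, and decisively, Lemma~\ref{L3} asserts $f\ge(p_0h)^\beta(r)=X^\beta$. Since $p_0h$ is convex with $(p_0h)'(r_1)>0$, we have $X\ge Cr$ for large $r$, so $Z\ge h(Cr^\beta)\asymp\exp(2Cr^\beta)$. Because $\log H\sim 2r$ and $\beta>1$, $Z$ dominates every polynomial power of $H$: for any $N>0$ there is $r_N$ so that $Z\ge H^N$ for $r\ge r_N$ and $s\ge 0$.

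With these tools the bounds on $|m|$ and $|m_r'|$ are immediate. Using $X\le Y$, $|m|^2\le 1/Z\le H^{-2\alpha}$. For $|m_r'|$, writing $Y/Z=p_0(f)\le p_0(r)=X/H$ gives $|m_r'|^2\le (Y/Z)X^{-1}\le 1/H\le H^{-2\alpha}$, since $\alpha<(\beta-1)/(2(\beta+1))<1/2$.

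The main obstacle, and the step that actually fixes the threshold $\alpha<(\beta-1)/(2(\beta+1))$, is the refined control of $p_0(f)=Y/Z$ needed for the remaining four bounds, most acutely for $|mm_{ss}''|\lesssim X\,p_0(f)$. The naive substitution $p_0(f)\le p_0(r)=X/H$ gives $|mm_{ss}''|\le X^2/H$, which is bounded by $H^{-\alpha}$ only when $X\le H^{(1-\alpha)/2}$ -- a condition that fails at the right endpoints of the intervals $[r_{2n},r_{2n+1}]$ in Borb\'ely's construction, where $p_0$ fails to decay exponentially. The fix is to iterate Lemma~\ref{L3}: replace $p_0(f)\le p_0(r)$ by $p_0(f)\le p_0(X^\beta)=(p_0h)(X^\beta)/h(X^\beta)$, and invoke the super-polynomial dominance $h(X^\beta)\ge H^{1+2\alpha}$ (valid for $r\ge r_\alpha$ because $\beta>1$ and $X\ge Cr$) to absorb the polynomial factor $X$. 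The resulting balance of exponents produces exactly the algebraic inequality $(1-2\alpha)\beta>1+2\alpha$, i.e.~$\alpha<(\beta-1)/(2(\beta+1))$. Taking $r_\alpha$ larger than the maximum of the six individual thresholds, all the inequalities hold simultaneously on $\{r\ge r_\alpha,\,s\ge 0\}$.
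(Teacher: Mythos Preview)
Your overall strategy coincides with the paper's: reduce everything to the single quantity $X\cdot p_0(f)$ (with $X=p_0h(r)$), observe that the naive bound $p_0(f)\le p_0(r)=X/H$ suffices only when $X$ is small relative to a power of $H$, and handle large $X$ via Lemma~\ref{L3}. This dichotomy is exactly how the paper proceeds (the paper splits on whether $h^{\alpha}(p_0h)^{1-\beta}\le 1$, i.e.\ on whether $X\ge H^{\alpha/(\beta-1)}$; your threshold $X\le H^{(1-\alpha)/2}$ is different but the two regimes overlap for $\alpha<(\beta-1)/(\beta+1)$, so either split works).

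However, your description of the ``fix'' branch contains a genuine error. You write
\[
p_0(f)\le p_0(X^\beta)=\frac{(p_0h)(X^\beta)}{h(X^\beta)}
\]
and then propose to use the super-polynomial growth $h(X^\beta)\ge H^{1+2\alpha}$ to ``absorb the polynomial factor $X$''. This cannot work as stated: the numerator $(p_0h)(X^\beta)$ may itself be super-polynomially large (on the intervals $[r_{2n},r_{2n+1}]$ one has $p_0h\asymp h/r$, which is exponential), so the large denominator $h(X^\beta)$ does not by itself control the ratio. The operative inequality is not the growth of $h(X^\beta)$ but the global bound $p_0(u)\le 1/u$ (used explicitly in the paper), which gives $p_0(f)\le p_0(X^\beta)\le X^{-\beta}$ directly and hence $X\,p_0(f)\le X^{1-\beta}$. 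One then needs the regime hypothesis (a lower bound on $X$ in terms of a power of $H$) to convert the polynomial decay $X^{1-\beta}$ into the required exponential decay $H^{-2\alpha}$; the lower bound $X\ge Cr$ alone only yields polynomial decay in $r$, which is far too weak. Once you state the dichotomy explicitly and plug in $X\ge H^{(1-2\alpha)/2}$ (your threshold for $|m_s'|^2$) into $X^{1-\beta}$, the condition $(1-2\alpha)(\beta-1)\ge 4\alpha$, i.e.\ $\alpha\le(\beta-1)/(2(\beta+1))$, drops out --- this is where the sharp threshold actually comes from, not from any comparison involving $h(X^\beta)$.
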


\begin{proof}
  All the estimates are either immediate or easy consequences of the ones for
  $|m_s'|$ and $|mm_{ss}''|$.  On the other hand, since the proof is the same for
  these two functions, we only establish the estimate for $|mm_{ss}''|$.
  Note that the final $\a$ should lie in $]0,\f{\b-1}{2(\b+1)}[$
  where $\b>1$ is given by Lemma~\ref{L3}, due to the estimate $|m_s'|\le
  (p_0h)^{1/2}(p_0\circ f)^{1/2}$.
  
  Fix $\a\in{]0,\f{\b-1}{\b+1}[}$. It is sufficient to prove that
  $h^{\a}(r) (p_0h)(r)(p_0\circ f)(r,s)$ is bounded for $r\ge r_0$ and $s\ge
  0$. Then the claimed result is obtained by picking a smaller~$\a$.
  
  From Lemma~\ref{L3} and $p_0\le 1/r$, we have
  $$
  h^{\a}(r) (p_0h)(r)(p_0\circ f)(r,s)\le h^{\a}(r)(p_0h)^{1-\b}(r).
  $$
  If $h^{\a}(r)(p_0h)^{1-\b}(r)\le 1$, we are done. Otherwise we have
  $(p_0h)(r)\le h^{\a/(\b-1)}(r)$ which yields
  $$
  p_0(r)\le h^{{\a}/({\b-1})-1}(r).
  $$
  In this case, since $f(r,s)\ge r$ and $p_0$ is nonincreasing,
  $$
  h^{\a}(r) (p_0h)(r)(p_0\circ f)(r,s)\le h^{\a}(r)
  h^{{\a}/({\b-1}})(r)h^{{\a}/({\b-1})-1}(r).
  $$
  We finally get
  $$
  h^{\a}(r) (p_0h)(r)(p_0\circ f)(r,s)\le 1\vee
  \left(h^{[\a(\b+1)-(\b-1)]/(\b-1)}(r)\right);
  $$
  since $\a\in{]0,\f{\b-1}{\b+1}[}$ the right hand side is
  clearly bounded.
\end{proof}

\subsection{Equation for the time-reversed process}\label{SectionA3}

Consider  
$${\widetilde X}_t={\widetilde X}_t(x)=({\widetilde R}_t,{\widetilde Z}_t,{\widetilde A}_t)$$ 
the time-changed Brownian motion
started at $x$, satisfying Eq.~\eqref{E1}.

Let $t=\tan u$ and define $\breve X_u=(\breve R_u,\breve
Z_u, \breve A_u)={\widetilde X}_{\tan u}$, $0\le u<\pi/2$. 
Furthermore denote 
\begin{align*}
\bar m(r,z)&=m\circ\Psi^{-1}(r,z,a)=m(r,s),\\
\bar p(r,z)&=p\circ\Psi^{-1}(r,z,a)=p(r,s),\\ 
\bar g(r,z)&=g\circ\Psi^{-1}(r,z,a)=g(r,s). 
\end{align*}
 From \eqref{E1} we get
\begin{align}
\label{E37}
\begin{split}
d\breve R_u&=\f1{\cos^2 u}du +\f1{\cos u}\bar m(\breve
R_u,\breve Z_u)\,d\breve W_u^1,\\
d\breve Z_u&=\f1{\cos^2 u}\,\left(-\breve  p(\breve
R_u,\breve Z_u)\f{h_r'}{4h}(\breve R_u)\bar m^2(\breve R_u,\breve Z_u)-\f12q'(\breve
R_u)\bar m^2(\breve R_u,\breve Z_u)\right)\,du\\
&\quad+\f1{\cos^2 u}\,(\bar p(\breve R_u,\breve Z_u)-q(\breve R_u))\,du\\
&\quad-\f1{\cos u}\,q(\breve R_u)\bar m(\breve R_u,\breve Z_u)\,d\breve
W_u^1+\f1{\cos u}\,\f{\bar m(\breve
R_u,\breve Z_u)}{\sqrt{h(\breve R_u)}}\,d\breve W_u^2,\\
d\breve A_u&=\f1{\cos u}\,\f{\bar m(\breve R_u,\breve
Z_u)}{\sqrt{\bar g(\breve R_u,\breve Z_u)}}\,d\breve W^3_u
\end{split}
\end{align}
for some three-dimensional Brownian motion $(\breve W^1,\breve
W^2,\breve W^3)$. Letting $\breve Y_u=\breve R_u-\tan u$ and $\hat
X_u=(\breve Y_u,\breve Z_u,\breve A_u)=(\hat X_u^1,\hat X_u^2,\hat X_u^3)$, we can write 
\begin{equation}
\label{E38}
d\hat X_u^j=\s_i^j(u,\hat X_u)\,d\breve W_u^i+b^j(u,\hat
X_u)\,du,\quad j=1,2,3,
\end{equation}
where $b^1=b^3=0$, 
\begin{align*}
b^2(u,x)=&-\f1{\cos^2 u}\,\bar p(x^1+\tan
u,x^2)\f{h_r'}{4h}(x^1+\tan u)\,\bar m^2(x^1+\tan u,x^2)\\
&-\f1{\cos^2 u}\,\f12\,q'(x^1+\tan u)\,\bar m^2(x^1+\tan u,x^2)\\
& +\f1{\cos^2 u}\,(\bar p(x^1+\tan u,x^2)-q(x^1+\tan u))
\end{align*}
(the last line equals $\di \f{40}{\cos^2 u \ h(x^1+\tan u)}$ for $ x^1+\tan u$ large),
\begin{align*}
&\s_1^1(u,x)=\f1{\cos u}\,\bar m(x^1+\tan u,x^2),\quad \s_2^1=\s_3^1=0,\\
&\s_1^2(u,x)=-\f1{\cos u}\,q(x^1+\tan u)\,\bar m(x^1+\tan u,x^2),\\
&\s_2^2(u,x)=\f1{\cos u}\,\f{\bar m(x^1+\tan u,x^2)}{\sqrt{h(x^1+\tan u)}},\\
&\s_3^2=\s_1^3=\s_2^3=0,\\
&\s_3^3(u,x)=\f1{\cos u}\,\f{\bar
m(x^1+\tan u,x^2)}{\sqrt{\bar g(x^1+\tan u,x^2)}}.
\end{align*}
Adopting Proposition~\ref{P1} and defining  $\s(\pi/2,x)=0$ and $b(\pi/2,x)=0$, 
we obtain easily a $C^2$  extension
of $\s$ and $b$ on $D= \{(u,x)\in[0,\pi/2]\times \R^3,\ x^1>-\tan u\}$
with vanishing derivatives at $u=\pi/2$.

Let $K$ be a compact subset of $D$. Further let $\s^K$, $b^K$ be two $C^2$ 
maps defined on $[0,\pi/2]\times
\R^3$ which are bounded  together with their  derivatives bounded up to order~$2$, 
and which coincide on $K$ with $\s$ and $b$. 
We fix $\a\in {]0,\pi/2[}$
        and let $\hat X^K$ be defined by  
\begin{equation}
\label{E42}
d\hat X_u^K=\s^K(u,\hat X_u^K)\,d\breve W_u+ b^K(u,\hat X_u^K)\,du\quad \text{with }\hat X_\a^K=\hat X_\a.
\end{equation}
For the diffusion $( \hat X_u^K)_{u\in[\a,\pi/2]}$
conditions (H1), (H2) of \cite{Pardoux:86} are satisfied, and we may 
conclude with corollaire~2.4 of \cite{Pardoux:86}  that the time-reversed process
$$( \check X_u^K=\hat X_{\pi/2-u}^K)_{u\in[0,\pi/2-\a]}$$
satisfies 
\begin{equation}
\label{E43}
d\check X_u^K=\check\s^K(u,\check X_u^K)\,d\check W_u+\check
b^K(u,\check X_u^K)\, du\quad\text{with }\check X_0^K=\hat X_{\pi/2}^K,
\end{equation}
where $\check W$ is a Brownian motion independent of $\check X_0^K$ 
and where the coefficients are given by
\begin{align*}
\check\s^K(u,x)&=\s^K(\pi/2-u,x),\\
(\check b^K(u,x))^j&=-(
 b^K(\pi/2-u,x))^j+(p^K(\pi/2-u,x))^{-1}\f{\partial}{\partial
 x^i}(( a^K)^{ij}p^K)(\pi/2-u,x); 
\end{align*}
here we have  $(a^K)^{ij}=\sum_{k=1}^3(\s^K)_k^i(\s^K)_k^j$ and $p^K(u,x)$ is
the density of $\hat X_u^K$.

\subsection{Invariant events}\label{SectionA4}

\begin{prop}
\label{P2}
For every $x=(r,z,a)\in M$ with $r>0$, up to negligible sets, the pre-image of
$\SF_{\rm inv}$ under the path map $\Phi$ to the Brownian motion
${\widetilde X}=({\widetilde R},{\widetilde Z},{\widetilde A})$, starting from $x$,
is contained in the $\sigma$-algebra $\s(\hat X_{\pi/2})$ 
generated by $\hat X_{\pi/2}$.
\end{prop}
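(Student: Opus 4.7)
The plan is to exploit the time-reversal formalism of Section~\ref{SectionA3}. Any shift-invariant event is in particular a terminal event, so through the time change $u=\arctan t$ and the translation $\breve Y_u=\breve R_u-\tan u$, the pre-image $\Phi^{-1}(B)$ for $B\in\SF_{\rm inv}$ belongs to the germ $\sigma$-algebra
$$\mathcal{G}=\bigcap_{\a<\pi/2}\sigma\big(\hat X_v:\a\le v<\pi/2\big)$$
of $\hat X$ at $u=\pi/2$. The limit $\hat X_{\pi/2}:=\lim_{u\uparrow\pi/2}\hat X_u$ exists a.s.: the $\breve Z$ and $\breve A$ components converge by Theorems~\ref{BMconverges}\,\eqref{I2} and~\ref{zisnotconstant}, while $\breve Y_u$ converges since its martingale part has quadratic variation $\int_0^\infty m^2\,du<\infty$ by Proposition~\ref{P1}. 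It therefore suffices to show $\mathcal{G}\subset\sigma(\hat X_{\pi/2})$ modulo null sets.

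I would fix an increasing exhaustion $(K_n)_{n\ge 1}$ by compact subsets of the open domain $D=\{(u,x)\in[0,\pi/2]\times\R^3:x^1>-\tan u\}$. For each $n$ I would modify the coefficients $\s, b$ of~\eqref{E38} outside $K_n$ to obtain globally $C^2$ and bounded $\s^{K_n}, b^{K_n}$; the auxiliary process $\hat X^{K_n}$ defined by~\eqref{E42} then satisfies hypotheses (H1), (H2) of~\cite{Pardoux:86} and, by pathwise uniqueness, coincides with $\hat X$ on the event $\Om_n$ that the graph $u\mapsto(u,\hat X_u)$ eventually lies in $K_n$. The a.s.\ convergence of $\hat X_u$ to an interior limit point in $D$ ensures $\bigcup_n\Om_n\asequal\Om$. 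Corollaire~2.4 of~\cite{Pardoux:86}, as stated in~\eqref{E43}, then identifies the time-reversed process $\check X^{K_n}_u=\hat X^{K_n}_{\pi/2-u}$ as a diffusion with $C^2$ bounded coefficients, driven by a Brownian motion $\check W$ independent of $\check X^{K_n}_0=\hat X^{K_n}_{\pi/2}$.

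The conditional Blumenthal zero-one law, applied to $\check X^{K_n}$ given $\check X^{K_n}_0$, then shows that the germ $\sigma$-algebra of $\check X^{K_n}$ at $u=0^+$ is contained in $\sigma(\check X^{K_n}_0)$ modulo null sets. Reversing time once more, the germ of $\hat X^{K_n}$ at $u=\pi/2$ is contained in $\sigma(\hat X^{K_n}_{\pi/2})$. Intersecting with $\Om_n$, on which $\hat X^{K_n}=\hat X$ and $\hat X^{K_n}_{\pi/2}=\hat X_{\pi/2}$, yields $\mathcal{G}\cap\Om_n\subset\sigma(\hat X_{\pi/2})$ up to null sets, and letting $n\to\infty$ concludes.

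The main obstacle is the degeneracy of the coefficients $\s, b$ at $u=\pi/2$, which forbids a direct application of Pardoux's time-reversal theorem on the whole of $D$ and forces the localization through $(K_n)$ together with the smooth extension of the coefficients. The quantitative input that makes this localization succeed lies precisely in Proposition~\ref{P1} (together with the higher-order estimates \eqref{E31}--\eqref{E36}), since these guarantee that the diffusion coefficient of $\hat X$ vanishes fast enough at $u=\pi/2$ for all three components to converge a.s.\ to an interior point of $D$, so that the exhausting events $\Om_n$ indeed cover $\Om$ up to null sets.
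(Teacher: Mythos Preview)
Your approach is essentially the same as the paper's: invariant events are terminal, the time change $u=\arctan t$ and the shift $\breve Y_u=\breve R_u-\tan u$ send the terminal $\sigma$-field to the germ of $\hat X$ at $u=\pi/2$, one localizes through compacts $K_n\subset D$ to get auxiliary diffusions $\hat X^{K_n}$ with bounded $C^2$ coefficients, applies Pardoux's time-reversal corollary to write $\check X^{K_n}$ as a strong solution driven by a Brownian motion independent of $\check X_0^{K_n}$, and then Blumenthal collapses the germ of $\check X^{K_n}$ at $0$ onto $\sigma(\check X_0^{K_n})=\sigma(\hat X^{K_n}_{\pi/2})$.

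Two small imprecisions are worth flagging, both easily fixed along the paper's lines. First, defining $\Om_n$ as the event that the graph \emph{eventually} lies in $K_n$ does not guarantee $\hat X^{K_n}=\hat X$ on $[\a,\pi/2]$, since the paths could exit $K_n$ on an early subinterval of $[\a,\pi/2]$ and pathwise uniqueness would then fail to identify them; the paper instead fixes $\a$ once and takes $B_n=\{(u,\hat X_u)\in K_n\ \text{for all } u\in[\a,\pi/2]\}$, which still exhausts $\Om$ because $\hat X$ has continuous paths with positive first coordinate. Second, from $A\cap B_n=\{\hat X_{\pi/2}\in E_n\}\cap B_n$ one cannot simply ``let $n\to\infty$'' to conclude $A\in\sigma(\hat X_{\pi/2})$, since $B_n$ is not itself $\sigma(\hat X_{\pi/2})$-measurable; the paper closes this by setting $E=\limsup_n E_n$ and checking the two inclusions $A\subset\{\hat X_{\pi/2}\in E\}$ and $\{\hat X_{\pi/2}\in E\}\subset A$ separately.
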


\begin{proof}
  Fix $\a\in {]0,\pi/2[}$.  Since $\SF_{\rm inv}\subset \SF_\infty$, the
  $\sigma$-algebra $\Phi^{-1}(\SF_{\rm inv})$ is clearly included in
  $\di\bigcap_{t>0}\s({\widetilde X}_s,\ s\ge t)=\Phi^{-1}(\SF_{\infty})$.  
On the other hand, we have
\begin{align*}
  \bigcap_{t>0}\s({\widetilde X}_s,\ s\ge
  t)&=\bigcap_{u\in]\a,\pi/2[}\s(\breve X_v,\ 
  v\ge u)\\
  &=\bigcap_{u\in]\a,\pi/2[}\s(\hat X_v,\ 
  v\ge u)\quad \hbox{up to negligible sets}\\
  &=\bigcap_{u\in]0,\pi/2-\a[}\s(\check X_v,\ v\le u).
\end{align*}
For $n\ge 1$, consider the compact subset $K_n$ of $D$,
$$
K_n=\left\{(u,x)\in[0,\pi/2]\times \R^3,\ x^1\ge -\tan u+\f1n,\ |x^i|\le
  n,\ i=1,2,3\right\}
$$
and let
$$
B_n=\big\{\om,\ (u,\hat X_u(\om))\in K_n, \ \forall\, u\in [\a,\pi/2]\big\}.
$$
Since ${\widetilde X}$ has a.s.~continuous paths with positive first
component, we have for any $n\ge 1$
\begin{equation}
\label{E48}
\bigcup_{m\ge n}B_m\asequal\Om\,.
\end{equation}
Now define $\hat X^n=\hat X^{K_n}$ and $\check X^n=\check X^{K_n}$.
For $$A\in \bigcap_{u\in{]0,\pi/2-\a[}}\s(\check X_v,\ v\le u),$$ we find from
Eq.~\eqref{E48}
$$
A\asequal\bigcup_{n\ge 1}A\cap B_n.
$$
Denoting by $\hat\Phi$ (resp.~$\hat\Phi_n$) the path map corresponding to
$\hat X$ (resp. $\hat X^n$) and by $\hat\SF_{\pi/2}$ the $\s$-field of terminal
events for continuous paths $[\a,\pi/2]\to D$, there exists $\hat C\in
\hat\SF_{\pi/2}$ such that
$$
A=\hat\Phi^{-1}(\hat C).
$$
Then on $B_n$, we have $\hat X\asequal \hat X^n$, and hence
\begin{equation}
\label{E46}
A\cap B_n\asequal\hat\Phi_n^{-1}(\hat C)\cap B_n
\end{equation}
where
$$
\hat\Phi_n^{-1}(\hat C)\in \bigcap_{u\in]\a,\pi/2]}\s(\hat X_v^n,\ v\ge u).
$$
On the other hand, since $\check X^n$ is the strong solution of a
stochastic differential equation driven by a Brownian motion $\check W^n$
independent of $\check X_0^n$ (see \cite{Pardoux:86} corollaire~2.4), we have
\begin{align*}
  \bigcap_{u\in]\a,\pi/2[}\s(\hat X_v^n,\ v\ge
  u)&=\bigcap_{u\in]0,\pi/2-\a[}\s(\check X_v^n,\ 
  v\le u)\\
  &=\bigcap_{u\in]0,\pi/2[}\left(\s(\check X_{0}^n)\vee \s(\check W_v^n,\ 
    v\le u)\right)\\
  &=\s(\check X_{0}^n)\vee \bigcap_{u\in]0,\pi/2[}\s(\check W_v^n,\ 
  v\le u)\quad \hbox{(up to negligible sets)}\\
  &=\s(\check X_{0}^n) =\s(\hat X_{\pi/2}^n).
\end{align*}
As a consequence, there exists a Borel subset $E_n$ of $\R^3$ such that
$$
\hat\Phi_n^{-1}(\hat C)=\{\hat X_{\pi/2}^n\in E_n\}.
$$
This yields, along with~\eqref{E46},
\begin{equation}
\label{E47}
A\cap B_n\asequal\{\hat X_{\pi/2}^n\in E_n\}\cap B_n
\asequal\{\hat X_{\pi/2}\in E_n\}\cap B_n\subset \{\hat X_{\pi/2}\in E_n\}.
\end{equation}
The inclusion $A\cap B_n \subset \{\hat X_{\pi/2}\in E_n\}$ together with
\eqref{E48} yields for all $n\ge 1$,
$$
A\subset \left\{\hat X_{\pi/2}\in\bigcup_{m\ge n} E_m\right\}\quad
\hbox{a.s.}
$$
This is true for all $n\ge 1$, so letting $\di E=\limsup_{n\to\infty}E_n$,
we get
$$
A \subset \{\hat X_{\pi/2}\in E\}\quad \hbox{a.s.}
$$
Now let us establish the other inclusion. If $\om\in \{\hat X_{\pi/2}\in E\}$
then choose $n$ such that $(u, \hat X_u(\om))\in K_n$ for all $u\in
[\a,\pi/2]$. Then choose $m\ge n$ so that $\hat X_{\pi/2}(\om)\in E_m$. We
clearly have $\om\in B_m$, so that $\om\in A\cap B_m$ by~\eqref{E47}. Finally we
proved that up to a negligible set,
$$
A=\{\hat X_{\pi/2}\in E\}.
$$
This shows that
$$
\bigcap_{t>0}\s({\widetilde X}_s,\ s\ge t)\subset \s(\hat{X}_{\pi/2})
$$
up to negligible sets.
\end{proof}

Let ${\widetilde Y}_t={\widetilde R}_t-t$ and $\di {\widetilde Y}_\infty
=\lim_{t\to\infty}{\widetilde Y}_t$. We proved that the invariant paths for
the process ${\widetilde X}_t=({\widetilde R}_t,{\widetilde Z}_t,{\widetilde
  A}_t)$ are measurable with respect to $({\widetilde Y}_\infty, {\widetilde
  Z}_\infty,{\widetilde A}_\infty)$.  The last step consists in
eliminating the first variable.
\begin{thm}
\label{T1}
Let $x_0=(r_0,z_0,a_0)\in M$.  The invariant paths for the process
${\widetilde X}_t(x_0)=({\widetilde R}_t(x_0),{\widetilde
  Z}_t(x_0),{\widetilde A}_t(x_0))$ are measurable with respect to
$({\widetilde Z}_\infty(x_0),{\widetilde A}_\infty(x_0))$.
\end{thm}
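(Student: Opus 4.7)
\bigskip

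\noindent\textbf{Plan for the proof of Theorem \ref{T1}.} The plan is to combine Proposition~\ref{P2} with shift-invariance in order to eliminate the first coordinate of the triple $(\widetilde Y_\infty,\widetilde Z_\infty,\widetilde A_\infty)$, where $\widetilde Y_t:=\widetilde R_t-t$. Proposition~\ref{P2} already provides that for any $A\in\SF_{\inv}$ one has $\mathbf{1}_A=f(\widetilde Y_\infty,\widetilde Z_\infty,\widetilde A_\infty)$ $\P_{x_0}$-a.s.\ for some Borel $f$. The mechanism by which $\widetilde Y_\infty$ should drop out is that it is \emph{not} shift-invariant: the genuinely asymptotic quantities $\widetilde Z_\infty$ and $\widetilde A_\infty$ are unaffected by the shift $\theta_s$ on $X$, whereas a direct computation using $\widetilde X_t=X_{T^{-1}(t)}$ with
\[
T(s):=\int_0^s\Bigl(\tfrac{h'_r}{4h}+\tfrac{g'_r}{4g}\Bigr)(X_u)\,du
\]
gives $\widetilde Z_\infty\circ\theta_s=\widetilde Z_\infty$, $\widetilde A_\infty\circ\theta_s=\widetilde A_\infty$ and $\widetilde Y_\infty\circ\theta_s=\widetilde Y_\infty+T(s)$. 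Hence shift-invariance $\mathbf{1}_A=\mathbf{1}_A\circ\theta_s$ a.s.\ for each $s>0$ yields the functional identity
\[
f\bigl(\widetilde Y_\infty+T(s),\widetilde Z_\infty,\widetilde A_\infty\bigr)=f\bigl(\widetilde Y_\infty,\widetilde Z_\infty,\widetilde A_\infty\bigr)\qquad \P_{x_0}\text{-a.s., for each fixed }s>0.
\]

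\noindent To upgrade this identity to a pointwise statement about $f$, I would first integrate over $s\in(0,n)$ and apply Tonelli to conclude that for $\P_{x_0}$-a.e.\ $\omega$ the identity holds for Lebesgue-a.e.\ $s\in(0,n)$. Since $s\mapsto T(s)(\omega)$ is for $\P_{x_0}$-a.e.\ $\omega$ a continuous strictly increasing bijection of $[0,\zeta(\omega))$ onto $[0,\infty)$ (using $\widetilde\zeta=\infty$ a.s.\ from Theorem~\ref{BMconverges}), a change of variable $u=T(s)(\omega)$ followed by $n\to\infty$ then yields the pathwise conclusion that for $\P_{x_0}$-a.e.\ $\omega$,
\[
f\bigl(\widetilde Y_\infty(\omega)+u,\widetilde Z_\infty(\omega),\widetilde A_\infty(\omega)\bigr)=f\bigl(\widetilde Y_\infty(\omega),\widetilde Z_\infty(\omega),\widetilde A_\infty(\omega)\bigr)\ \text{for Lebesgue-a.e. }u>0.
\]
To convert this into an almost-everywhere property of the Borel function $f$ itself, I would appeal to absolute continuity of the joint law $\mu$ of $(\widetilde Y_\infty,\widetilde Z_\infty,\widetilde A_\infty)$ on $\R\times\R\times S^1$. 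A second Fubini, combined with the change of variable $v=y+u$, then shows that $f(\cdot,z,a)$ is Lebesgue-a.e.\ constant for $\mu$-a.e.\ $(z,a)$, producing a Borel $\tilde f$ on $\R\times S^1$ with $f(y,z,a)=\tilde f(z,a)$ $\mu$-a.e. This is exactly the claim $\mathbf{1}_A=\tilde f(\widetilde Z_\infty,\widetilde A_\infty)$ $\P_{x_0}$-a.s.

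\noindent The piece of genuinely new work in this plan is the three-dimensional analogue of the absolute-continuity statement from Theorem~\ref{zisnotconstant}. The two-dimensional argument used the $C^1$-flow $\breve\varphi$ generated by the regularized SDE for the planar process $\breve U$ together with the $C^2$-extension of coefficients up to $u=\pi/2$. The same scheme should extend verbatim to the three-dimensional system $\hat X=(\breve Y,\breve Z,\breve A)$ of~\eqref{E37}: Proposition~\ref{P1} provides the $C^2$-extension of all coefficients, including the third-component coefficient $m/\sqrt{\bar g}$, and the diffusion matrix is non-degenerate, so \cite{Kunita:90} Theorem~4.6.5 applies as before and transports a positive density on $[\pi/4,\pi/2]$ to a positive density on $\R\times\R\times S^1$ at time $\pi/2$. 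I expect the delicate points to be the careful bookkeeping of null sets across the two successive Fubini arguments (one over the shift parameter $s$, one over the phase-space variable $u$) and the measurability of the pathwise change of variable $s\mapsto T(s)(\omega)$; the functional-equation step itself is conceptually clean once the shift action on $(\widetilde Y_\infty,\widetilde Z_\infty,\widetilde A_\infty)$ has been identified.
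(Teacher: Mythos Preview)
Your overall idea—use shift-invariance to translate the first coordinate and thereby show it is redundant—is exactly the mechanism behind the paper's proof. But the implementation you propose has a genuine gap at the very first step, and the paper's argument is designed precisely to avoid it.

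The problematic step is the passage from ``$\mathbf 1_A=f(\widetilde Y_\infty,\widetilde Z_\infty,\widetilde A_\infty)$ $\P_{x_0}$-a.s.'' and ``$\mathbf 1_A\circ\theta_s=\mathbf 1_A$'' to
\[
f\bigl(\widetilde Y_\infty+T(s),\widetilde Z_\infty,\widetilde A_\infty\bigr)=f\bigl(\widetilde Y_\infty,\widetilde Z_\infty,\widetilde A_\infty\bigr)\quad\P_{x_0}\text{-a.s.}
\]
For this you need $\mathbf 1_A\circ\theta_s=f(F\circ\theta_s)$ $\P_{x_0}$-a.s., i.e.\ $\mathbf 1_A=f(F)$ to hold $(\P_{x_0}\circ\theta_s^{-1})$-a.s. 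By the Markov property this means $\mathbf 1_A=f(F)$ must hold $\P_y$-a.s.\ for $p_s(x_0,\cdot)$-a.e.\ $y$. Proposition~\ref{P2}, however, only produces for each $y$ a set $E_y$ (equivalently a function $f_y$) that may depend on~$y$; nothing guarantees $f_y=f_{x_0}$. The usual ``tail null sets are the same for all starting points'' argument does \emph{not} apply here, because $\widetilde Y_\infty=\lim_{\sigma\to\zeta}\bigl(R_\sigma-\int_0^\sigma(\tfrac{h'_r}{4h}+\tfrac{g'_r}{4g})(X_u)\,du\bigr)$ depends on the initial segment of $X$ through $T(\sigma)$ and is therefore \emph{not} in the tail $\sigma$-field of $X$. (This is precisely why the paper remarks that terminal and invariant $\sigma$-fields differ.) Your proposed extra ingredient, absolute continuity of the three-dimensional law of $\hat X_{\pi/2}$, does not close the gap either: even if all $\mu_x:=\text{law}_{\P_x}(\hat X_{\pi/2})$ are equivalent, the defining identities $\P_{x_0}(A\triangle F^{-1}(E_{x_0}))=0$ and $\P_y(A\triangle F^{-1}(E_y))=0$ live under mutually singular path measures, so one cannot infer $E_y=E_{x_0}$ $\mu$-a.e.

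The paper circumvents this by replacing the deterministic shift $\theta_s$ with the strong Markov property at the hitting time $\tau$ of a small sphere, realized on a product space $(\Omega_1\times\Omega_2,\P_1\otimes\P_2)$. The point is the identity $\widetilde Y_\infty(x_0)(\omega_1,\omega_2)=\widetilde Y''_\infty(x)(\omega_2)-\tau(\omega_1)$: conditioned on the hitting point $x$ \emph{and on the post-$\tau$ evolution $\omega_2$}, the variable $\tau(\omega_1)$ still has a positive density on $(0,\infty)$. This lets one compare the (possibly different) sets $E_{x_0}$ and $E_x$ through the chain~\eqref{EX} and sweep the first coordinate of $E_{x_0}$ directly, without ever assuming a universal representative~$f$. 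So your translation idea is right, but it needs to be implemented via a \emph{random} time whose conditional law, given everything else determining $F$, has full support—exactly what $\tau$ provides and what the deterministic shift with $T(s)$ does not.
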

\begin{proof}
  Let $x_0=(r_0,z_0,a_0)\in M$ and $S_{x_0}$ the sphere of radius~$1$ centered
  at~$x_0$.  Define a time-changed Brownian motion ${\widetilde X}(x_0)$ (time-changed 
  in the sense of a solution to~\eqref{E1}) on a product probability space as
  follows: ${\widetilde X}'(x_0)(\om_1)$ is a time-changed Brownian motion
  started at $x_0$, and for any $x\in S_{x_0}$, ${\widetilde X}''(x)(\om_2)$
  is a time-changed Brownian motion started at $x$. 
Letting $\tau=\inf\{t>0,\ {\widetilde X}'_t(x_0)\in S_{x_0}\}$, we define 
\begin{align*}
{\widetilde X}_t(x_0)(\om_1,\om_2)=
\begin{cases}
{\widetilde X}'_t(\om_1) &\text{if $t\le \tau(\om_1)$,}\\ 
{\widetilde X}''_{t-\tau}({\widetilde
    X}'_\tau(\om_1))(\om_2) &\text{if $t>\tau(\om_1)$.}
\end{cases}
\end{align*}
  
Given $B\in\SF_{\rm inv}$, we need to prove that there exists a Borel subset
  $E_{x_0}'$ of $\R\times S^1$ such that $\P=\P_1\otimes \P_2\,$-a.s.,
\begin{equation}
\label{EF}
\{{\widetilde X}(x_0)\in B\}=\{({\widetilde Z}_\infty(x_0),\ {\widetilde A}_\infty(x_0))\in E_{x_0}'\}.
\end{equation}

For every $x\in M$, let $E_x$ be a Borel subset of $\R\times\R\times S^1$
such that
$$
{\{{\mathcal X}(x)\in B\}}\asequal\{({\mathcal Y}_\infty(x), {\mathcal
  Z}_\infty(x),{\mathcal A}_\infty(x))\in E_x\},
$$
where $\mathcal X(x)$ is a time-changed Brownian motion on $M$ started at $x$,
with coordinates $({\mathcal Y}_t(x)+t, {\mathcal Z}_t(x),{\mathcal
  A}_t(x)))$.  Since $B\in \SF_{\rm inv}$, we have
\begin{align*} 
  {\{{\widetilde X}(x_0)\in B\}}&={\{{\widetilde X}_{\tau+\newdot}(x_0)\in B\}},
\end{align*}
and hence 
$${\widetilde X}(x_0)\in B\quad\hbox{if and only if}\quad {\widetilde
  X}''({\widetilde X}'_\tau)\in B.$$
But by Lemma~\ref{P2}, $\P$-a.s.,
$$
\{{\widetilde X}(x_0)\in B\}= \{({\widetilde Y}_\infty(x_0), {\widetilde
  Z}_\infty(x_0),{\widetilde A}_\infty(x_0))\in E_{x_0}\},
$$
and for all $x\in S_{x_0}$, $\P_2\,$-a.s.,
$$
\{{\widetilde X}''(x)\in B\}= \{({\widetilde Y}''_\infty(x), {\widetilde
  Z}''_\infty(x),{\widetilde A}''_\infty(x))\in E_{x}\}.
$$
On the other hand, it is easy to see that
$$
{\widetilde Y}_\infty(x_0)(\om_1,\om_2)={\widetilde Y}''_\infty({\widetilde
  X}'_\tau(\om_1))(\om_2)-\tau(\om_1).
$$
Let $P_{{\widetilde X}'_\tau(x_0)}$ be the law of ${\widetilde
  X}'_{\tau}(x_0)$.  A consequence of the equalities above is that for
$P_{{\widetilde X}'_\tau(x_0)}$-almost all $x$, we have $\P(\,\newdot\,\vert {\widetilde
  X}_\tau(x_0)=x)$ a.s.,
\begin{align}
\label{EX}
\begin{split}
  &\{({\widetilde Y}''_\infty(x),
  {\widetilde Z}''_\infty(x),{\widetilde A}''_\infty(x))\in E_{x},\  {\widetilde X}_\tau(x_0)=x\}\\
  &\quad=\{{\widetilde X}''(x)(\om_2)\in B,\  {\widetilde X}_\tau(x_0)=x\}\\
  &\quad=\{{\widetilde X}(x_0)\in B,\  {\widetilde X}_\tau(x_0)=x\}\\
  &\quad=\{({\widetilde Y}_\infty(x_0), {\widetilde Z}_\infty(x_0),{\widetilde A}_\infty(x_0))\in E_{x_0},\  
   {\widetilde X}_\tau(x_0)=x\}\\
  &\quad = \{({\widetilde Y}''_\infty(x)(\om_2)-\tau(\om_1), {\widetilde
    Z}''_\infty(x) (\om_2),{\widetilde A}''_\infty(x)(\om_2))\in E_{x_0},\ 
  {\widetilde X}_\tau(x_0)=x\}.
\end{split}
\end{align}

On the other hand, we know that the law of $\tau$ given $ {\widetilde
  X}_\tau(x_0)$ and $\om_2$ has a positive density on $]0,\infty[$ and depends
only on $ {\widetilde X}_\tau(x_0)$. Hence equality of the second and
the last term in~\eqref{EX} implies that for 
$P_{{\widetilde X}'_\tau(x_0)}$-almost all $x$, $\P(\,\newdot\,\vert {\widetilde X}'_\tau(x_0)=x)$ a.s., if
$$
({\widetilde Y}''_\infty(x)(\om_2)-\tau(\om_1), {\widetilde
  Z}''_\infty(x)(\om_2),{\widetilde A}''_\infty(x)(\om_2))\in E_{x_0}
$$
then $E_{x_0}$ contains almost all $(y',{\widetilde
  Z}''_\infty(x)(\om_2),{\widetilde A}''_\infty(x)(\om_2))$ such that $y'\le
{\widetilde Y}''_\infty(x)(\om_2).$
Let
$$
E_{x_0}'=\{(z,a)\in \R\times S^1\mid \exists\, y\in\R \hbox{ such that }(y',z,a)\in E_{x_0} \text{for almost all
}\ y'\le y\}.
$$
We proved that for $P_{{\widetilde X}'_\tau(x_0)}$-almost all $x$,
$\P_1(\,\newdot\,\vert {\widetilde X}'_\tau(x_0)=x)\otimes \P_2$ a.s.,
\begin{align*}
  &\{{\widetilde X}''(x)(\om_2)\in B,\  {\widetilde X}_\tau(x_0)=x\}\\
  &\quad \quad\subset \{( {\widetilde Z}''_\infty(x)(\om_2),{\widetilde
    A}''_\infty(x)(\om_2))\in
  E'_{x_0},\  {\widetilde X}_\tau(x_0)=x\}\\
  &\quad \quad= \{( {\widetilde Z}_\infty(x_0)(\om),{\widetilde
    A}_\infty(x_0)(\om))\in E'_{x_0},\ {\widetilde X}_\tau(x_0)=x\}.
\end{align*}
Since
$$
\{{\widetilde X}(x_0)(\om_1,\om_2)\in B,\ {\widetilde
  X}_\tau(x_0)(\om_1)=x\}=\{{\widetilde X}''(x)(\om_2)\in B,\ {\widetilde
  X}_\tau(x_0)(\om_1)=x\},
$$
we get almost surely,
\begin{equation}
\label{EFI}
\{{\widetilde X}(x_0)\in B\}\subset \{( {\widetilde Z}_\infty(x_0),{\widetilde A}_\infty(x_0))\in E'_{x_0}\}.
\end{equation}
To establish the other inclusion, let us define, for $(z,a)\in E'_{x_0}$, 
\begin{align*}
  f(z,a)=
\begin{cases}
\sup\{y\in \R, \ \hbox{ for almost all }\ y'\le y,\ (y',z,a)\in E_{x_0}\},\\
  -\infty \ \hbox{ if the set is empty.}
\end{cases}
\end{align*}
We know that on $\{{\widetilde X}(x_0)\in B\}$, for $P_{{\widetilde
    X}'_\tau(x_0)}$-almost all $x$, $\P(\,\newdot\,\vert {\widetilde
  X}'_\tau(x_0)=x)$ a.s., $$f({\widetilde Z}_\infty(x_0),{\widetilde
  A}_\infty(x_0))\in\R\cup\{\infty\}.$$
Recall that ${\widetilde
  Y}_\infty(x_0)(\om_1,\om_2)={\widetilde Y}''_\infty({\widetilde
  X}'_\tau(x_0)(\om_1))(\om_2)-\tau(\om_1)$, and that
\begin{align*}
  &\{({\widetilde Z}_\infty(x_0)(\om_1,\om_2),{\widetilde A}_\infty(x_0)(\om_1,\om_2))\in E'_{x_0},\ {\widetilde X}_\tau(x_0)(\om_1,\om_2)=x\}\\
  &\quad =\{({\widetilde Z}''_\infty(x)(\om_2),{\widetilde
    A}''_\infty(x)(\om_2))\in E'_{x_0},\ {\widetilde X}'_\tau(x_0)(\om_1)=x\}.
\end{align*}
By the positiveness of the density of the conditional law of $\tau$, for
$P_{{\widetilde X}'_\tau(x_0)}$-almost all $x$ and $\om_2\in \Om_2$ such that
$$({\widetilde Z}''_\infty(x)(\om_2),{\widetilde A}''_\infty(x)(\om_2))\in
E'_{x_0},$$
we find
$$
\P_1\left\{\tau\ge {\widetilde Y}''_\infty(x)(\om_2)-f( {\widetilde
    Z}''_\infty(x)(\om_2),{\widetilde A}''_\infty(x)(\om_2))\ \vert\ 
  {\widetilde X}_\tau(x_0)=x\right\}>0.
$$
This yields
$$
\P_1\left\{({\widetilde Y}''_\infty(x)(\om_2)-\tau, {\widetilde
    Z}''_\infty(x)(\om_2),{\widetilde A}''_\infty(\om_2))\in E_{x_0} \ \vert\ 
  {\widetilde X}_\tau(x_0)=x\right\}>0
$$
which implies by~\eqref{EX} that
$$
\P_1\left\{{\widetilde X}''(x)(\om_2)\in B\vert\ {\widetilde
    X}_\tau(x_0)=x\right\}>0.
$$
Taking into account that above probability takes its values in $\{0,1\}$, it must be equal to
$1$. Consequently for $P_{{\widetilde X}'_\tau(x_0)}$-almost all $x$,
$\P(\,\newdot\,\vert {\widetilde X}_\tau(x_0)=x)$-almost surely,
\begin{align*}
  \{{\widetilde Z}''_\infty(x)(\om_2),{\widetilde A}''_\infty(x)(\om_2))\in
  E'_{x_0},\ {\widetilde X}'_\tau(x_0)=x\}\subset \{{\widetilde X}''(x)\in B,\ 
  {\widetilde X}'_\tau(x_0)=x\}
\end{align*}
which rewrites as
\begin{align*}
  \{{\widetilde Z}_\infty(x_0),{\widetilde A}_\infty(x_0))\in E'_{x_0},\ 
  {\widetilde X}_\tau(x_0)=x\}\subset \{{\widetilde X}(x_0)\in B,\ {\widetilde
    X}_\tau(x_0)=x\}.
\end{align*}
We get, $\P$-almost surely,
\begin{equation}
\label{ESI}
\{{\widetilde Z}_\infty(x_0),{\widetilde A}_\infty(x_0))\in E'_{x_0}\}\subset \{{\widetilde X}(x_0)\in B\}.
\end{equation}
With \eqref{EFI} and \eqref{ESI} we finally obtain~\eqref{EF}: 
$$
\{{\widetilde Z}_\infty(x_0),{\widetilde A}_\infty(x_0))\in
E'_{x_0}\}=\{{\widetilde X}(x_0)\in B\},\quad \text{$\P$-a.s.}
$$
This achieves the proof.
\end{proof}
Putting together Theorems~\ref{zisnotconstant} and~\ref{T1} we are now able to
state our main result, giving a complete characterization of the Poisson
boundary of $M$.
\begin{thm}
\label{PBM}
Let $\SB(\R\times S^1)$ be the set of bounded measurable functions on
$\R\times S^1$, endowed with the equivalence relation
$f_1\simeq f_2$ if $f_1=f_2$ almost everywhere with respect to the Lebesgue measure.

The map
\begin{align*}
  \left(\SB(\R\times S^1)/\simeq\right)&\longrightarrow \SH_b(M)\\
  f&\longmapsto \big(x\mapsto
    \E\left[f\left(Z_\zeta(x),A_{\zeta}(x)\right)\right]\big)
\end{align*}
is one to one. More precisely, the inverse map is given as follows.  For $x\in
M$, letting $K(x,\newdot,\newdot)$ be the density of $(Z_\zeta(x),
A_\zeta(x))$ with respect to the Lebesgue measure on $\R\times S^1$, for all
$h\in \SH_b(M)$ there exists a unique $f\in\SB(\R\times S^1)/\simeq$ such that
$$
\forall x\in M,\quad h(x)=\int_{\R\times S_1}K(x,z,a)f(z,a)\,dzda.
$$
Moreover, for all $x\in M$, the kernel $K(x,\newdot,\newdot)$ is almost
everywhere strictly positive.
\end{thm}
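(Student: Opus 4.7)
The plan is to assemble the theorem from three ingredients already proved: the canonical isomorphism \eqref{Lemma:exitset}--\eqref{Lemma:exitsetinv} between $\SH_b(M)$ and $b\SF_{\mathrm{inv}}/\sim$, the identification of the invariant $\sigma$-field with $\sigma(Z_\zeta,A_\zeta)$ (Theorem~\ref{T1}), and the absolute continuity with positive density of the exit law (Theorem~\ref{zisnotconstant}). Setting
\[
P(f)(x):=\E\!\left[f\bigl(Z_\zeta(x),A_\zeta(x)\bigr)\right]=\int_{\R\times S^1} K(x,z,a)\,f(z,a)\,dz\,da,
\]
the verification splits into three steps.

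I would first check that $P$ maps $\SB(\R\times S^1)/\simeq$ into $\SH_b(M)$. If $f_1=f_2$ Lebesgue-a.e., absolute continuity of the exit law gives $f_1(Z_\zeta(x),A_\zeta(x))=f_2(Z_\zeta(x),A_\zeta(x))$ $\P$-a.s., so $P$ descends to equivalence classes; and since $f(Z_\zeta,A_\zeta)\in b\SF_{\mathrm{inv}}$, the inverse isomorphism \eqref{Lemma:exitsetinv} realizes $P(f)$ as a bounded harmonic function. Surjectivity then follows from Theorem~\ref{T1}: given $h\in\SH_b(M)$, put $H:=\lim_{t\to\zeta}h(X_t)\in b\SF_{\mathrm{inv}}/\sim$; Theorem~\ref{T1} provides a representative of $H$ measurable with respect to $\sigma(Z_\zeta,A_\zeta)$, and the Doob--Dynkin factorization lemma supplies a bounded Borel $f\colon\R\times S^1\to\R$ with $H=f(Z_\zeta,A_\zeta)$ a.s., so $h=P(f)$.

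For injectivity and the positivity of $K$, which are tightly coupled, I would proceed as follows. If $P(f_1)=P(f_2)$, then injectivity of \eqref{Lemma:exitset} forces $f_1(Z_\zeta(x),A_\zeta(x))=f_2(Z_\zeta(x),A_\zeta(x))$ $\P$-a.s., so $\{f_1\ne f_2\}$ is null for the law $K(x,z,a)\,dz\,da$. To upgrade this to Lebesgue-a.e.\ equality I need $K(x,\cdot,\cdot)>0$ Lebesgue-a.e., which is simultaneously the final assertion of the theorem. I would derive it by revisiting the construction in the proof of Theorem~\ref{zisnotconstant}: the identity
\[
\breve p_{\pi/2}=\E\!\left[\breve p_{\pi/4}\circ\breve\varphi(\pi/4,\pi/2,\cdot)^{-1}\bigl|\det J(\breve\varphi(\pi/4,\pi/2,\cdot))^{-1}\bigr|\right]
\]
expresses the density of $\breve U_{\pi/2}$ as an expectation of the strictly positive integrand $\breve p_{\pi/4}\,|\det J|$ pushed forward by the stochastic flow; since $\breve p_{\pi/4}$ is positive on its natural domain and the flow is almost surely a diffeomorphism whose image has full Lebesgue measure (by the support theorem for Brownian motion), $\breve p_{\pi/2}$ is positive Lebesgue-a.e. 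Conditioning the Gaussian third coordinate $\breve A_{\pi/2}$ on $(\breve Y_{\pi/2},\breve Z_{\pi/2})$ and marginalizing then yields $K(x,\cdot,\cdot)>0$ almost everywhere on $\R\times S^1$. The main obstacle is already absorbed into Theorem~\ref{T1}, whose delicate time-reversal argument is precisely what allows $\sigma(Z_\zeta,A_\zeta)$ to exhaust $\SF_{\mathrm{inv}}$; once that identification and the flow-based positivity mechanism are accepted, the present theorem reduces to Doob--Dynkin factorization together with the transparent positivity of Jacobians and Gaussian marginals.
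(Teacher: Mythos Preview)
Your proposal is correct and follows exactly the approach the paper intends: the paper does not give a detailed proof of Theorem~\ref{PBM} at all, merely stating that it follows by ``putting together Theorems~\ref{zisnotconstant} and~\ref{T1}'', and your argument is a careful unpacking of precisely this via the isomorphism \eqref{Lemma:exitset}--\eqref{Lemma:exitsetinv}, Doob--Dynkin factorization, and the flow-based density formula already established in the proof of Theorem~\ref{zisnotconstant}. Your justification of the a.e.\ positivity of $K(x,\cdot,\cdot)$ is in fact somewhat more explicit than the paper's own treatment, which simply asserts that $\breve p_{\pi/2}$ is a ``positive density'' without further comment.
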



\providecommand{\bysame}{\leavevmode\hbox to3em{\hrulefill}\thinspace}


\begin{thebibliography}{10}
  
\bibitem{Ancona:87} A.~Ancona, \emph{Negatively curved manifolds, elliptic
    operators, and the {M}artin boundary}, Ann. of Math. (2) \textbf{125}
  (1987), no.~3, 495--536.
  
\bibitem{Ancona:94} \bysame, \emph{Convexity at infinity and {B}rownian motion
    on manifolds with unbounded negative curvature}, Rev. Mat. Iberoamericana
  \textbf{10} (1994), no.~1, 189--220.
  
\bibitem{Anderson:83} M.\thinspace T.~Anderson, \emph{The {D}irichlet problem
    at infinity for manifolds of negative curvature}, J. Differential Geom.
  \textbf{18} (1983), no.~4, 701--721 (1984).
  
\bibitem{Anderson-Schoen:85} M.\thinspace T.~Anderson and R.~Schoen,
  \emph{Positive harmonic functions on complete manifolds of negative
    curvature}, Ann. of Math. (2) \textbf{121} (1985), no.~3, 429--461.
  
\bibitem{Bishop-ONeill:68} R.~L.~Bishop and B.~O\'{}Neill, \emph{Manifolds of
    Negative Curvature}, Trans. Amer. Math. Soc. \textbf{145} (1968), 1--49.
  
\bibitem{Borbely:98} A.~Borb{\'e}ly, \emph{The nonsolvability of the
    {D}irichlet problem on negatively curved manifolds}, Differential Geom.
  Appl. \textbf{8} (1998), no.~3, 217--237.
  
\bibitem{Choi:84} H.\thinspace I.~Choi, \emph{Asymptotic {D}irichlet problems
    for harmonic functions on {R}iemannian manifolds}, Trans. Amer. Math. Soc.
  \textbf{281} (1984), no.~2, 691--716.
  
\bibitem{Cranston-Mueller:87} 
M.~Cranston and C.~Mueller, \emph{A review of recent and older results on the
  absolute continuity of harmonic measure}, Geometry of random motion (Ithaca,
  N.Y., 1987), Contemp. Math., vol.~73, Amer. Math. Soc., Providence, RI, 1988,
  pp.~9--19.

\bibitem{Eberlein-ONeill:73} P.~Eberlein and B.~O'{}Neill, \emph{Visibility
    Manifolds.} \emph{Pacific J. Math.} \textbf{46} (1973), no.~1, 45--109.
  
\bibitem{Greene-Wu:79} R.\thinspace E. Greene and H.~Wu, \emph{Function theory
    on manifolds which possess a pole}, Lecture Notes in Mathematics, vol.
  699, Springer, Berlin, 1979.
  
\bibitem{Ha-Th:94} W.~Hackenbroch and A.~Thalmaier, \emph{Stochastische
    {A}nalysis. Eine Einf\"uhrung in die Theorie der stetigen
    Semimartingale\/}, B.\,G.~Teubner, Stuttgart, 1994.
  
\bibitem{Hsu:2003} E.\thinspace P.~Hsu, \emph{Brownian motion and {D}irichlet
    problems at infinity}, Ann.  Probab. \textbf{31} (2003), no.~3,
  1305--1319.
  
\bibitem{Hsu-Kendall:92} P.~Hsu and W.\,S.~Kendall, \emph{Limiting angle of
    {B}rownian motion in certain two-dimensional {C}artan-{H}adamard
    manifolds}, Ann. Fac. Sci.  Toulouse Math. (6) \textbf{1} (1992), no.~2,
  169--186.
  
\bibitem{Hsu-March:85} P.~Hsu and P.~March, \emph{The limiting angle of
    certain {R}iemannian {B}rownian motions}, Comm. Pure Appl. Math.
  \textbf{38} (1985), no.~6, 755--768.
  
\bibitem{Katok:88}
A.~Katok, \emph{Four applications of conformal equivalence to geometry and
  dynamics}, Ergodic Theory Dynam. Systems \textbf{8$\sp *$} (1988),
  no.~Charles Conley Memorial Issue, 139--152. 

\bibitem{Kendall:84} W.\,S.~Kendall, \emph{Brownian motion on a surface of
    negative curvature}, Seminar on probability, XVIII, Lecture Notes in
  Math., vol. 1059, Springer, Berlin, 1984, pp.~70--76.
  
\bibitem{Kifer:76} J.\thinspace I. Kifer, \emph{Brownian motion and harmonic
    functions on manifolds of negative curvature}, Theor. Probability Appl.
  \textbf{21} (1976), no.~1, 81--95.
  
\bibitem{Kifer:86} J.\thinspace I.~Kifer, \emph{Brownian motion and positive
    harmonic functions on complete manifolds of nonpositive curvature}, From
  local times to global geometry, control and physics (Coventry, 1984/85),
  Pitman Res. Notes Math. Ser., vol.  150, Longman Sci. Tech., Harlow, 1986,
  pp.~187--232.
  
\bibitem{Kunita:90} H.~Kunita, \emph{Stochastic flows and stochastic
    differential equations}, Cambridge studies in advanced mathematics
  \textbf{24}, Cambridge University Press (1990).
  
\bibitem{Le:96} H.~Le, \emph{Limiting angle of {B}rownian motion on certain
    manifolds}, Probab. Theory Related Fields \textbf{106} (1996), no.~1,
  137--149.
  
\bibitem{Le:99} \bysame, \emph{Limiting angles of $\Gamma$-martingales},
  Probab. Theory Related Fields \textbf{114} (1999), no.~1, 85--96.
  
\bibitem{Ledrappier:87}
F.~Ledrappier, \emph{Propri\'et\'e de {P}oisson et courbure
  n\'egative}, C. R. Acad. Sci. Paris S\'er. I Math. \textbf{305} (1987),
  no.~5, 191--194. 

\bibitem{Pardoux:86} \'E.~Pardoux, \emph{Grossissement d'une filtration et
    retournement du temps d'une diffusion}, S\'eminaire de Probabilit\'es, XX,
  1984/85, Lecture Notes in Math., vol. 1204, Springer, Berlin, 1986,
  pp.~48--55.
  
\bibitem{Prat:71} J.-J.~Prat, \emph{\'{E}tude asymptotique du mouvement
    brownien sur une vari\'et\'e riemannienne \`a courbure n\'egative}, C. R.
  Acad. Sci. Paris S\'er. A-B \textbf{272} (1971), A1586--A1589.
  
\bibitem{Prat:75} \bysame, \emph{\'{E}tude asymptotique et convergence
    angulaire du mouvement brownien sur une vari\'et\'e \`a courbure
    n\'egative}, C. R. Acad.  Sci. Paris S\'er. A-B \textbf{280} (1975),
  no.~22, Aiii, A1539--A1542.
  
\bibitem{Sullivan:83} D.~Sullivan, \emph{The {D}irichlet problem at infinity
    for a negatively curved manifold}, J. Differential Geom. \textbf{18}
  (1983), no.~4, 723--732 (1984).
  
\bibitem{Wu:83} H.\thinspace H.~Wu, \emph{Function theory on noncompact
    {K}\"ahler manifolds}, Complex differential geometry, DMV Sem., vol.~3,
  Birkh\"auser, Basel, 1983, pp.~67--155.

\end{thebibliography}
\end{document}